\documentclass[12pt]{article}
\usepackage{amsfonts}
\usepackage{graphicx}
\usepackage{latexsym,amsmath,color}

\topmargin -0.5cm \oddsidemargin 0cm \evensidemargin 0.0cm
\textwidth 16cm \textheight 21.7cm \footskip 1cm \topskip -1.5 cm

\def\esp{\mathbb{E}}

\def\1{\mathbb{I}}

\newcounter{thm}[section]
\newcounter{appen}

\newtheorem{theor}[thm]{Theorem}
\newtheorem{cor}[thm]{Corollary}
\newtheorem{lem}[thm]{Lemma}

\newenvironment{proof}[1][Proof]{\noindent \textbf{#1.}
}{\rule{0.5em}{0.5em}}
\newtheorem{hp}{Assumption}

\setcounter{footnote}{0}

\begin{document}

\title{Projection-based nonparametric goodness-of-fit testing with functional covariates}
\author{Valentin Patilea\footnote{CREST (Ensai) \& IRMAR, France; patilea@ensai.fr. This author
gratefully acknowledges financial support from the Romanian National Authority for Scientific Research, CNCS-UEFISCDI, project
PN-II-ID-PCE-2011-3-0893.}\;\;\;\;\;\; C\'esar S\'anchez-Sellero\footnote{Facultad de Matem\'aticas, Universidad de Santiago de Compostela, Spain; cesar.sanchez@usc.es. This author
gratefully acknowledges support from the Spanish Ministry of Science, project MTM2008-03010, and from Ensai.}\;\;\;\;\;\; Matthieu Saumard\footnote{INSA-IRMAR, France; Matthieu.Saumard@insa-rennes.fr.}}
\date{\today}
\maketitle

\begin{abstract}

{\small This paper studies the problem of nonparametric testing for the effect of a random functional covariate on a real-valued error term. The covariate takes values in $L^2[0,1]$, the Hilbert space of the square-integrable real-valued functions on the unit interval. The error term could be directly observed as a response or \emph{estimated} from a functional parametric model, like for instance the functional linear regression. Our test is based on the remark that checking the no-effect of the functional covariate is equivalent to checking the nullity of the conditional expectation of the error  term given a sufficiently rich set of projections of the covariate. Such projections could be on elements of norm 1 from finite-dimension subspaces of $L^2[0,1]$. Next, the idea is to search a finite-dimension element of norm 1 that is, in some sense, the least favorable for the null hypothesis. Finally, it remains to perform a nonparametric  check of the nullity of the conditional expectation of the error term given the scalar product between the covariate and the selected least favorable direction. For such finite-dimension search and nonparametric check we use a kernel-based approach. As a result, our test statistic is a quadratic form based on univariate kernel smoothing and the asymptotic critical values are given by the standard normal law. The test is able to detect nonparametric alternatives, including the polynomial ones. The error term  could present heteroscedasticity of unknown form. We do no require the law of the covariate $X$ to be known. The test could be implemented quite easily and performs well in simulations and real data applications. We illustrate the performance of our test for checking the functional linear regression model.

\bigskip

\noindent Keywords: functional data regression, kernel smoothing, nonparametric testing

\bigskip \noindent MSC2000:  Primary 62G10, 62G20 ; Secondary 62G08, 62J05.}
\end{abstract}



\section{Introduction}

Consider a sample of independent copies $(U_1, X_1), \cdots, (U_n, X_n)$ of  $(U,X)$ where $U$ is a real-valued random variable and $X$
is a square-integrable random function defined on the unit interval. The problem we investigate herein is the test of the hypothesis
\begin{equation}
H_{0} :\,\, \mathbb{E} \left( U | X \right) = 0 \quad
\mbox{
\rm almost surely (a.s.)}  \label{hh0}
\end{equation}
against the nonparametric alternative $\mathbb{P} [\mathbb{E} \left( U | X \right) = 0]<1$. We consider two cases: (a) $U$ is directly observed; and (b) $U$ is not observed and is estimated as a residual of a parametric model for functional covariates and scalar responses.

There has been  substantial recent work on the theoretical study of the functional data analysis. The monographs of Ramsay and Silverman (2002, 2005)
and Ferraty (2011) provide a comprehensive landscape of the importance of the statistical methods for functional data. Estimation and prediction with functional covariates received substantial attention in the literature: for example by Ferraty and Vieu (2006), Cai and Hall (2006), Hall and Horowitz (2007), Crambes, Kneip and Sarda (2008), Yao and M\"{u}ller (2010) and the references therein.

The goodness-of-fit problem we address seems to be much less explored. There is a large literature on model checks like (\ref{hh0}) against nonparametric alternatives when $X$ takes values in a finite-dimension space, see for instance H\"{a}rdle and Mammen (1993), Stute (1997), Horowitz and Spokoiny (2001), Guerre and Lavergne (2005). In the case of functional covariate $X$, much little work was accomplished for testing against general types of alternatives. To our best knowledge, the only contribution considering the problem of testing $H_0$ against nonparametric alternatives in the cases (a) and (b) is the recent paper of Delsol, Ferraty and Vieu (2011) who extend the idea of H\"{a}rdle and Mammen (1993) to the functional covariate case. However, their results are derived under some strong assumptions, like for instance the assumptions on the rates of convergence of the so-called small ball probabilities and the law of the covariate $X$ that are supposed to be known. It is not clear how the test of Delsol, Ferraty and Vieu (2011) could be easily applied in practice, for instance for testing the goodness-of-fit of the functional linear model.
Some more substantial work was done for testing for no effect in a functional linear model, see Cardot, Ferraty, Mas and Sarda (2003), Cardot, Goia and Sarda (2007),  or for testing the functional linear model against quadratic alternatives, see Horv\`{a}th and Reeder (2011). By construction, such procedures are not able to detect general departures from the null hypothesis.

The test we introduce herein is based on a dimension reduction idea used by Lavergne and Patilea (2008) in a finite dimension setup. Our test is able to detect \emph{nonparametric} alternatives, including the polynomial ones. The variable $U$ could be heteroscedastic and we do not require the conditional variance of $U$ given $X$ to be known. We do no require the law of the covariate $X$ to be given or to be of a certain type, like for instance Gaussian. The test could be implemented quite easily and performs well in simulations and real data applications.

The paper is organized as follows. In section \ref{section1} we introduce the main notation and we derive a fundamental lemma for our approach. This lemma shows that checking condition (\ref{hh0}) is equivalent to checking the nullity of the conditional expectation of $U$ given a sufficiently rich set of projections of $X$ on elements of norm 1 from finite-dimension subspaces of $L^2[0,1]$. Next, the idea is to search in finite-dimension subspaces of $L^2[0,1]$ a least favorable  element of norm 1 and to check the nullity of the conditional expectation of $U$ given the scalar product between $X$ and the selected least favorable direction. In section \ref{sec3} we introduce the test statistic for testing of no-effect of $X$ on $U$ when $U$ is observed. Our statistic is a quadratic form, based on \emph{univariate} kernel smoothing, that behaves like a standard normal random variable under $H_0$. We prove that, under mild integrability or boundedness assumptions, the induced test is consistent against \emph{any} type of fixed alternatives and against sequences of directional alternatives approaching the null hypothesis at a suitable rate. The allowed rates are almost the same as those obtained in parametric model checks based on kernel smoothing with \emph{univariate} covariate, see for instance Guerre and Lavergne (2005) or Lavergne and Patilea (2008). In section \ref{sec44} we apply our projection-based approach for nonparametric checks of the functional regression models.
We will focus on the linear functional model, although, at the expense of longer arguments, the methodology we propose also adapts to other models, like for instance the generalized functional linear models introduced by M\"{u}ller and Stadtm\"{u}ller (2005).
In the functional regression case  the variable $U$ is the unobserved error term of the regression model and hence the test statistic is based on the estimated residuals. We still obtain standard normal critical values and consistency against nonparametric alternatives, fixed or approaching the null hypothesis. However, more restrictive conditions on the bandwidths are required due to the estimation of the slope of the functional linear model. This induces restrictions on the rate the directional alternatives may approach the null hypothesis. More difficult the estimation of the slope parameter is, slower the rate the directional alternative approach the null hypothesis should be. For estimating the slope parameter in the functional linear regression model we will focus on the standard approach based on functional principal component analysis.
In section \ref{cesar_is_the_best} an empirical study is reported. First, a wild bootstrap procedure is proposed as a means to approximate the critical values of the test statistic. Then, the results of a simulation study are briefly explained. The conclusion is that the test works well in practice. Under the null, the level is quite well respected and the power is more than acceptable even in the comparison with parametric tests. The proposed test is consistent under general alternatives. Some advices and comments are provided about the choice of the parameters involved in the new test. The test is applied to test the goodness-of-fit of the functional linear model and the functional quadratic model for the Tecator data set. Both models are rejected which indicates that more flexible models should be considered, like for instance the semiparametric index models introduced by Chen, Hall and M\"{u}ller (2011).
The proofs of our theoretical results are relegated to the appendix.

\section{Dimension reduction in nonparametric testing}\label{section1}

Let us introduce some notation. For any $p\geq 1$, let $\mathcal{S}^p =\{ \gamma\in\mathbb{R}^p: \|\gamma\|=1\}$ denote the unit hypersphere in $\mathbb{R}^p$.
Let $L^2[0,1]$ be the space of the square-integrable real-valued functions defined on the unit interval
$\langle \cdot, \cdot \rangle$ denote the inner product in $L^2[0,1]$, that is for any $X_1,X_2 \in L^2[0,1]$
$$
\langle X_1, X_2 \rangle = \int_0^1 X_1(t) X_2(t) dt.
$$
Let $\|\cdot\|_{L^2}$ be the associated norm. Hereafter  $\mathcal{R} = \{ \rho_1,\rho_2, \cdots \}$ will be an arbitrarily  fixed orthonormal basis of the function space $L^2[0,1]$, that
is $\langle  \rho_i, \rho_j \rangle  = \delta_{ij}$.
Then  the predictor process $X$  can be expanded into
\begin{equation}\label{basis_dec}
X(t) = \sum_{j=1}^\infty x_j \rho_j(t),
\end{equation}
where the random coefficients $x_j$ are given by $x_j = \langle  X, \rho_j \rangle $. For a fixed positive integer $p$, $X^{(p)}\in L^2[0,1] $ will be the projection of $X$ on the subspace generated by the first $p$ elements of the basis  $\mathcal{R}$, that is
$$
X^{(p)}(t) = \sum_{j=1}^p x_j \rho_j(t).
$$
Let us notice that $\|X^{(p)}\|_{L^2}$ coincides with the Euclidean norm of the vector $(x_1, \cdots,x_p)$ in $\mathbb{R}^p$. By abuse we also identify $X^{(p)}$ with the $p-$dimension random vector $(x_1, \cdots,x_p).$ On the other hand, for any integer $p>1$ and non random vector $\gamma = (\gamma_1, \cdots,\gamma_p) \in\mathbb{R}^p$, we consider by abuse $\gamma$ an element in $L^2[0,1]$
with $(\gamma_1, \cdots,\gamma_p, 0,0, \cdots )$  the coefficients of its expansion and  hence $\langle X, \gamma \rangle = \langle X^{(p)}, \gamma \rangle = \sum_{i=1}^p x_j \gamma_j$. In the following we will also use $\beta = \sum_{j=1}^\infty b_j \rho_j(t)$ to denote a non random element of $L^2[0,1]$.


Our approach relies on the following lemma, an extension of Lemma 2.1 of Lavergne and Patilea (2008) and Theorem 1 in Bierens (1990) to Hilbert space-valued conditioning random variables.  The result shows that for checking nullity of a conditional expectation, it is equivalent to consider
expectations conditional on $X$ and expectations conditional on $L^2 [0,1]$ projections of $X$ on a sufficiently rich set of directions.

\begin{lem}
\label{lem1} Let $X\in L^2[0,1]$ and $Z\in \mathbb{R}$ be random
variables. Assume that  $\mathbb{E} | Z | <\infty $ and $\mathbb{E} ( Z ) =0.$

(A) The following statements are equivalent:
\begin{enumerate}
\item $\mathbb{E}(Z\mid X)=0$ a.s.
\item $\mathbb{E}(Z\mid \langle X,\beta \rangle )=0$ a.s. $
 \forall \beta \in L^2[0,1]$ with $\|\beta\|_{L^2} = 1$.
\item for any integer $p\geq 1$, $\mathbb{E}(Z\mid \langle X, \gamma \rangle )=0$ a.s. $
 \forall \gamma \in\mathcal{S}^p.$
\item\label{point4} for any integer $p\geq 1$, $\mathbb{E}(Z\mid  X^{(p)} )=0$ a.s.
\end{enumerate}

(B) Suppose in addition that for any positive real number $s$,
\begin{equation}\label{cond_x_cond}
\mathbb{E}(|Z|\exp\{ s \|X\| \}) <\infty.
\end{equation}
If $\mathbb{P} [\mathbb{E} (Z\mid X) = 0] <1$, then there exists a positive integer $p_0\geq 1$ such that for any  integer $p> p_0$, the set
$$
\{\gamma\in\mathcal{S}^p : \mathbb{E}(Z \mid \langle X, \gamma \rangle )=0 \,\, a.s.\, \}
$$
has Lebesgue measure zero on the unit hypersphere  $\mathcal{S}^p$ and is not dense.
\end{lem}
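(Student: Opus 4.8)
The plan is to prove (A) by the implication cycle $1\Rightarrow2\Rightarrow3\Rightarrow4\Rightarrow1$, and (B) by reducing everything, for each fixed $p$, to a real-analytic function on $\mathbb{R}^p$ together with a scaling (cone) argument. For (A): the step $1\Rightarrow2$ is the tower property, since $\langle X,\beta\rangle$ is $\sigma(X)$-measurable, so $\mathbb{E}(Z\mid\langle X,\beta\rangle)=\mathbb{E}(\mathbb{E}(Z\mid X)\mid\langle X,\beta\rangle)=0$. Because every $\gamma\in\mathcal{S}^p$, read as an element of $L^2[0,1]$ through the basis $\mathcal{R}$, has unit $L^2$-norm, $2\Rightarrow3$ is immediate. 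For $3\Rightarrow4$ fix $p$: statement~3 gives $\mathbb{E}(Z\,e^{\mathrm{i}s\langle X,\gamma\rangle})=0$ for all $s\in\mathbb{R}$ and $\gamma\in\mathcal{S}^p$, and since $\langle X,\gamma\rangle=\langle X^{(p)},\gamma\rangle$ while $s\gamma$ sweeps all of $\mathbb{R}^p$, the finite signed measure $A\mapsto\mathbb{E}(Z\mathbf{1}_{\{X^{(p)}\in A\}})$ on $\mathbb{R}^p$ has identically vanishing characteristic function, hence is the null measure, i.e.\ $\mathbb{E}(Z\mid X^{(p)})=0$ a.s. Finally $4\Rightarrow1$ is L\'evy's upward martingale convergence theorem along the increasing filtration $\sigma(X^{(p)})=\sigma(x_1,\dots,x_p)$, whose union generates $\sigma(X)$ by the expansion \eqref{basis_dec}: using $\mathbb{E}|Z|<\infty$, $\mathbb{E}(Z\mid X^{(p)})\to\mathbb{E}(Z\mid X)$ a.s., while the left-hand side is identically $0$.

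For (B), by (A) the hypothesis $\mathbb{P}[\mathbb{E}(Z\mid X)=0]<1$ says statement~4 fails, so there is $p_0\ge1$ with $\mathbb{P}[\mathbb{E}(Z\mid X^{(p_0)})=0]<1$, and by the tower property along the increasing filtration this persists for every $p\ge p_0$. Fix $p>p_0$, set $W=X^{(p)}\in\mathbb{R}^p$, and define $H(\gamma)=\mathbb{E}(Z\,e^{\langle W,\gamma\rangle})$ for $\gamma\in\mathbb{R}^p$. Assumption \eqref{cond_x_cond} makes $H$ finite and, by dominated convergence and differentiation under the integral, the restriction to $\mathbb{R}^p$ of an entire function on $\mathbb{C}^p$, so $H$ is real-analytic on $\mathbb{R}^p$. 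If $H\equiv0$ on $\mathbb{R}^p$ its entire extension vanishes on $\mathbb{C}^p$, giving $\mathbb{E}(Z\,e^{\mathrm{i}\langle W,t\rangle})=0$ for all $t\in\mathbb{R}^p$ and hence $\mathbb{E}(Z\mid W)=0$ a.s., a contradiction; thus $H\not\equiv0$, and its zero set $N=\{\gamma\in\mathbb{R}^p:H(\gamma)=0\}$ has Lebesgue measure zero, by the standard fact that a real-analytic function not identically zero on a connected open set has a Lebesgue-null zero set.

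The link to the statement is that $N$ is ``cone-saturated'' at the relevant points: if $\gamma\in\mathcal{S}^p$ satisfies $\mathbb{E}(Z\mid\langle W,\gamma\rangle)=0$ a.s., then conditioning on $\langle W,\gamma\rangle$ gives $H(\lambda\gamma)=\mathbb{E}\!\left(e^{\lambda\langle W,\gamma\rangle}\,\mathbb{E}(Z\mid\langle W,\gamma\rangle)\right)=0$ for every $\lambda\in\mathbb{R}$, so the whole line $\{\lambda\gamma:\lambda\in\mathbb{R}\}$ lies in $N$. Writing $B$ for the set in the statement, the cone $\{\lambda\gamma:\lambda\in\mathbb{R},\,\gamma\in B\}$ is contained in $N$; integrating in polar coordinates, if $B$ had positive surface measure on $\mathcal{S}^p$ this cone would have positive Lebesgue measure, contradicting $\mathrm{Leb}(N)=0$. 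Hence $B$ has zero surface measure on $\mathcal{S}^p$. Moreover $B=\bigcap_{\lambda\in\mathbb{R}}\{\gamma\in\mathcal{S}^p:\mathbb{E}(Z\,e^{\mathrm{i}\lambda\langle W,\gamma\rangle})=0\}$, and each set on the right is closed because $\gamma\mapsto\mathbb{E}(Z\,e^{\mathrm{i}\lambda\langle W,\gamma\rangle})$ is continuous (dominated convergence, dominating function $|Z|$); so $B$ is closed, and a closed subset of $\mathcal{S}^p$ of zero surface measure has empty interior and is therefore not dense (alternatively, a dense $B$ would equal $\mathcal{S}^p$, forcing $N=\mathbb{R}^p$ and $H\equiv0$).

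I expect the main obstacle, and the place where \eqref{cond_x_cond} is genuinely needed, to be the passage to an \emph{entire} extension of $H$ on $\mathbb{C}^p$ and the appeal to the (classical but not wholly elementary) Lebesgue-nullity of zero sets of nontrivial real-analytic functions; once those are in hand the rest reduces to bookkeeping with the tower property, Fourier uniqueness on $\mathbb{R}^p$, and martingale convergence, together with a little care over the two ``for all $p$'' quantifiers in (A), which is where the monotonicity of $\{\sigma(X^{(p)})\}_p$ reappears.
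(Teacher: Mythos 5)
Your proposal is correct, and it reaches both parts of the lemma by a genuinely different route than the paper. For part (A), the paper proves $(2)\Rightarrow(1)$ and $(3)\Rightarrow(1)$ directly, via the identity $\mathbb{E}[\exp\{i\langle X,\beta\rangle\}\mathbb{E}(Z\mid X)]=0$, the decomposition $Z=Z^+-Z^-$, and the uniqueness theorem for characteristic functionals of finite positive measures on the Hilbert space (Parthasarathy), together with a dominated-convergence passage from $\beta^{(p)}$ to $\beta$; the equivalence $(3)\Leftrightarrow(4)$ is then quoted from Lavergne and Patilea. You instead close the cycle $1\Rightarrow2\Rightarrow3\Rightarrow4\Rightarrow1$, doing the only nontrivial steps with finite-dimensional Fourier uniqueness (for $3\Rightarrow4$) and L\'evy's upward martingale convergence along $\sigma(X^{(p)})\uparrow\sigma(X)$ (for $4\Rightarrow1$); this avoids infinite-dimensional characteristic functionals altogether and is arguably cleaner, at the cost of checking that $\bigvee_p\sigma(X^{(p)})=\sigma(X)$, which holds since $X^{(p)}\to X$ in $L^2[0,1]$ for each $\omega$. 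For part (B), the paper follows Bierens (1990): it works with one-dimensional Laplace transforms $b\mapsto\mathbb{E}(Z\exp\{b\langle X,\gamma\rangle\})$, whose real-analyticity forces their zero sets in $b$ to be finite, introduces a perturbation point $\upsilon^\star$, and runs a coordinate-by-coordinate induction to show the relevant subset of the unit ball is Lebesgue-null and not dense. You replace all of this by a single $p$-variate entire function $H(\gamma)=\mathbb{E}(Z\exp\{\langle X^{(p)},\gamma\rangle\})$, the fact that a nontrivial real-analytic function has a Lebesgue-null zero set, and a cone/polar-coordinates argument to transfer nullity from $\mathbb{R}^p$ to the sphere; non-density then follows from closedness of the set plus its null measure. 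Your version is shorter and more conceptual but leans on two facts the paper deliberately avoids (the iterated identity theorem for the entire extension, and the multivariate real-analytic null-set theorem), whereas the paper's induction uses only one-variable analyticity of Laplace transforms at the price of more bookkeeping. All the individual steps you give (integrability of $Z e^{\lambda\langle X^{(p)},\gamma\rangle}$ under condition (\ref{cond_x_cond}), the identity $H(\lambda\gamma)=\mathbb{E}\bigl(e^{\lambda\langle X^{(p)},\gamma\rangle}\,\mathbb{E}(Z\mid\langle X^{(p)},\gamma\rangle)\bigr)$, and the persistence of $\mathbb{P}[\mathbb{E}(Z\mid X^{(p)})=0]<1$ for $p\geq p_0$ by iterated expectations) check out.
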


\bigskip

Point (A) is a cornerstone for proving the behavior of our test under the null and the alternative hypothesis. Point (B) shows that in applications it will not be difficult to find directions $\gamma$ able to reveal the failure of the null hypothesis (\ref{hh0}). Under the additional assumption (\ref{cond_x_cond}) such directions represent almost all the points on the unit hyperspheres $\mathcal{S}^p$, provided $p$ is sufficiently large. The assumption (\ref{cond_x_cond}) is not restrictive for testing purposes. Indeed, if $X$ does not satisfy condition (\ref{cond_x_cond}), it suffices to transform  $X$ into some variable  $W\in L^2[0,1]$ such that the $\sigma-$field generated by $W$ is the same as the one generated by $X$ and the variable $W$ satisfies condition (\ref{cond_x_cond}).\footnote{For instance, given $X = \sum_{j\geq 1} x_j \rho_j$, one may build $w_j = a_j \arctan(x_j)$, where $a_j$ are non random such that $\sum_{j\geq 1} a_j^2 < \infty$ and may use the bounded random function  $W = \sum_{j\geq 1} w_j \rho_j\in L^2[0,1]$ (bounded means $\|W\|$ is a bounded random variable) instead of $X$ in the conditioning.} Clearly, when $U$ is the error term in some functional regression model for which one wants to check the goodness-of-fit, one should use a transformation of $X$ only \emph{after} estimating the errors in the model.

The following new formulations of $H_{0}$ are direct consequences of Lemma \ref{lem1}-(A).

\begin{cor}
\label{charac} Consider a real-valued random variable  $U$
such that  $\mathbb{E}| U| <\infty $. Let $\omega(\beta,t)$, $\beta\in L^2[0,1]$ and $t\in\mathbb{R}$, be a real-valued function such that $\omega(\beta,\langle X,\beta\rangle)>0$ for all $\|\beta\|_{L^2} =1$. For any $p\geq 1$, let $w_p(\gamma,t)$, $\gamma\in\mathbb{R}^p$ and $t\in\mathbb{R}$, be a real-valued function such that $w_p(\gamma,\langle X,\gamma\rangle)>0$ for all $\|\gamma\| =1$.
The following statements are equivalent:
\begin{enumerate}
\item The null hypothesis (\ref{hh0}) holds true.
\item
\begin{equation}
\max_{\beta\in L^2[0,1],\; \|\beta\|_{L^2} = 1} \mathbb{E} \left[ U \mathbb{E}
\left( U  |\langle X,\beta\rangle \right) \omega (\beta, \langle X,\beta\rangle ) %
\right] = 0  .  \label{nh0_a}
\end{equation}
\item\label{point_33}  for any $p\geq 1$ and any set $B_p\subset\mathcal{S}^p$ with strictly positive Lebesgue measure on the unit hypersphere  $\mathcal{S}^p,$
\begin{equation}
\max_{\gamma\in B_p} \mathbb{E} \left[ U \mathbb{E}
\left( U  |\langle X,\gamma\rangle\right) w_p (\gamma, \langle X,\gamma\rangle)
\right] = 0.  \label{nh0_b}
\end{equation}\end{enumerate}
\end{cor}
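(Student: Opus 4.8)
The plan is to reduce all three statements to Lemma~\ref{lem1}-(A) by first rewriting the expectations in (\ref{nh0_a}) and (\ref{nh0_b}) in a manifestly nonnegative form. Fix $\beta\in L^2[0,1]$ with $\|\beta\|_{L^2}=1$, put $V_\beta=\langle X,\beta\rangle$ and let $m_\beta(\cdot)=\mathbb{E}(U\mid V_\beta=\cdot)$. Since $\omega(\beta,V_\beta)$ is $\sigma(V_\beta)$-measurable, the tower property gives
\[
\mathbb{E}\!\left[ U\,\mathbb{E}(U\mid V_\beta)\,\omega(\beta,V_\beta)\right]
=\mathbb{E}\!\left[ \{\mathbb{E}(U\mid V_\beta)\}^2\,\omega(\beta,V_\beta)\right]\geq 0,
\]
where nonnegativity uses $\omega(\beta,V_\beta)>0$; moreover this quantity vanishes if and only if $\mathbb{E}(U\mid\langle X,\beta\rangle)=0$ a.s. The identical computation with $w_p$ and $\gamma\in\mathcal{S}^p$ shows that every term in (\ref{nh0_b}) is nonnegative and vanishes exactly when $\mathbb{E}(U\mid\langle X,\gamma\rangle)=0$ a.s. Hence ``$\max=0$'' in (\ref{nh0_a}) (resp. (\ref{nh0_b})) is merely the assertion that $\mathbb{E}(U\mid\langle X,\beta\rangle)=0$ a.s. for every unit $\beta\in L^2[0,1]$ (resp. $\mathbb{E}(U\mid\langle X,\gamma\rangle)=0$ a.s. for every $\gamma\in B_p$); here it is harmless whether or not the supremum is attained, since a supremum of nonnegative numbers is $0$ iff all of them are $0$.

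With this reformulation the equivalences follow from Lemma~\ref{lem1}-(A), once one observes that $\mathbb{E}(U)=0$ holds under any of the three statements: indeed $\mathbb{E}(U\mid\langle X,\beta\rangle)=0$ a.s. for even a single $\beta$ already yields $\mathbb{E}(U)=\mathbb{E}[\mathbb{E}(U\mid\langle X,\beta\rangle)]=0$, so the lemma applies with $Z=U$. Then statement~1 $\Leftrightarrow$ statement~2 is exactly the equivalence of points (1) and (2) of Lemma~\ref{lem1}-(A). For statement~1 $\Rightarrow$ statement~3, point (3) of the lemma gives $\mathbb{E}(U\mid\langle X,\gamma\rangle)=0$ a.s. for all $p\geq1$ and all $\gamma\in\mathcal{S}^p$, hence a fortiori for all $\gamma\in B_p$, so the maximum in (\ref{nh0_b}) is $0$. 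For statement~3 $\Rightarrow$ statement~1, apply statement~3 with the particular choice $B_p=\mathcal{S}^p$ (which trivially has strictly positive Lebesgue measure on $\mathcal{S}^p$) to recover point (3) of the lemma and conclude $\mathbb{E}(U\mid X)=0$ a.s.

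The argument has no genuinely hard step, since the substance is carried entirely by Lemma~\ref{lem1}; the points requiring a little care are the following. First, the integrability needed to pull $\omega(\beta,V_\beta)$ (resp. $w_p(\gamma,V_\gamma)$) through the conditional expectation and to make the displayed quadratic forms well defined, which we treat as implicit in the statement (it holds, for instance, when the weights are bounded, as in the kernel-based construction used later). Second, the bookkeeping of quantifiers: statement~3 must be read as holding for \emph{every} admissible $B_p$, so the implication from statement~1 uses the full strength of Lemma~\ref{lem1}-(A) (all projection directions are ``good'' under $H_0$), whereas the reverse implication only needs the single instance $B_p=\mathcal{S}^p$. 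Third, the strict positivity of the weights is precisely what forbids a nonzero regression $m_\beta$ supported on an $\omega$-null set; without it the displayed expectation could vanish while $m_\beta\not\equiv 0$, so the weights cannot be dropped.
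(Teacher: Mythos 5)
Your proof is correct and is essentially the paper's intended argument: the paper gives no separate proof, stating only that the corollary is a ``direct consequence of Lemma \ref{lem1}-(A)'', and your reduction via the tower property, the strict positivity of the weights, and the nonnegativity of each term is exactly that consequence spelled out. Your observation that $\mathbb{E}(U)=0$ is recovered under each of the three statements (so that the lemma's hypothesis $\mathbb{E}(Z)=0$ is available in every direction of the equivalence) is a point the paper leaves implicit, and is handled correctly.
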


\section{Testing the effect of a functional covariate}

\label{sec3}

\setcounter{equation}{0}

We introduce a general approach for nonparametric testing of the effect of a functional
covariate $X$ on a real-valued random variable $U$. For simplicity, here  we assume that $\mathbb{E}(U)=0$, the nonzero mean case is contained in the setup considered in section \ref{sec44} below. Our approach is based on Corollary \ref{charac}-(\ref{point_33}) and \emph{univariate} kernel smoothing. In this way we avoid the problem of smoothing in infinite-dimension, in particular we avoid using the small ball function required in the kernel regression with functional covariates, see Ferraty and Vieu (2006), Delsol, Ferraty and Vieu (2011).

To avoid handling denominators close to zero, we set the weight function $\omega (\gamma,\cdot )$
in Corollary  \ref{charac} equal to the density of $\langle X, \gamma\rangle $, denoted by $f_{\gamma }(\cdot )$,
which is assumed to exist for any $\gamma $. For any $\gamma\in \mathbb{R}^p$, let
\begin{equation*}
Q(\gamma )=\mathbb{E}\{U\; \mathbb{E}[U\mid \langle X,
\gamma\rangle ]f_{\gamma }(\langle X,\gamma\rangle )\}=\mathbb{E}\{\mathbb{E}^{2}[U
\mid \langle X,\gamma\rangle ]f_{\gamma }(\langle X,\gamma\rangle )\}.
\end{equation*}
For any $p \geq 1$, let $B_p\subset\mathcal{S}^p$ be a set with strictly positive Lebesgue measure in $\mathcal{S}^p.$ By Corollary \ref{charac}, the null hypothesis (\ref{hh0}) holds true if and only if
\begin{equation}\label{test_a}
\forall p\geq 1,\quad \max_{\gamma\in B_p}Q(\gamma )=0.
\end{equation}

\subsection{The test statistic}\label{test_stat_sec}

In view of equation (\ref{test_a}), our goal is to estimate $Q(\gamma).$ With at hand a  sample  of $(U,X)$, define
\begin{equation*}
Q_{n}\left(\gamma \right) =\frac{1}{n(n-1)}\sum\limits_{1\leq i\neq
j\leq n}U_{i} U_{j} \frac{1}{h}
K_{h}\left( \langle X_{i}-X_{j},\gamma\rangle  \right),\quad \gamma\in\mathcal{S}^p,
\end{equation*}%
where  $K_{h}\left( \cdot \right) =K\left( \cdot /h\right) $, where $K(\cdot )$
is a kernel and $h$ a bandwidth. In the case of finite dimension covariates, the function $\gamma\mapsto Q_{n}(\gamma)$ is the statistic considered by Lavergne and Patilea (2008), see also Bierens (1990). For  fixed $p$ and $\gamma\in\mathcal{S}^p$, it is well-known that $Q_{n}\left(\gamma \right)$ has
asymptotic centered normal distribution with rate $nh^{1/2}$ under $H_{0}$; see for instance  Guerre and Lavergne (2005).
We will show that the asymptotic normal distribution is preserved even when $p$ grows at a suitable rate with the sample size.
On the other hand, Lemma \ref{lem1}-(B) indicates that if $p$ is large enough, the maximum of $Q\left(\gamma\right)$ over $\gamma$  stays away from zero when $H_{0}$ fails.

For a fixed $p$ the statistic $Q_{n}(\gamma)$ is expected to be close to $Q(\gamma  )$ uniformly in $\gamma$. Then a natural idea would be to build a test statistic using the maximum of $Q_{n}(\gamma)$ with respect to $\gamma$. However, there is an additional difficulty one faces in the functional data framework since then one  has to let $p$ to grow to infinity with the sample size, and hence the closeness between $Q_{n}(\gamma)$ and  $Q(\gamma)$ requires a more careful investigation. On the other hand, like in the finite dimension covariate case, under $H_0$ one expects $Q_{n}(\gamma)$ to converges to zero for any $p$ and $\gamma$ and thus the objective function of the maximization problem to be flat.

We will choose a direction $\gamma$ as the least favorable direction for the null hypothesis $H_0$ obtained from a penalized criterion based on a standardized version of $Q_{n}\left(\gamma \right)$. Lavergne and Patilea (2008) and Bierens (1990) considered this idea using  $Q_{n}\left(\gamma \right)$. Here we use a standardized version of $Q_{n}\left(\gamma \right)$. More precisely, fix some $\beta_0\in L^2[0,1]$ that could be interpreted as an initial \emph{guess} of an unfavorable direction for $H_0.$
Let $b_{0j},$ $j\geq 1$,  be the coefficients in the expansion of $\beta_0$ in the basis $\mathcal{R}$. For any given $p\geq 1$ such that $\sum_{j=1}^p b_{0j}^2 >0$, let
$$
\gamma_0^{(p)} = (b_{01},\cdots,b_{0p})/\sqrt{\sum_{j=1}^p b_{0j}^2}\; \;.
$$
Let $\widehat{v}_{n}^{2} (\cdot)$ be an estimate of the
variance of $nh^{1/2}Q_{n}(\cdot) $. Given  $B_p\subset \mathcal{S}^p$ with strictly positive Lebesgue measure in $\mathcal{S}^p$ that contains $\gamma_0^{(p)}$,  the least favorable direction $\gamma$ for $H_0$ is defined as
\begin{equation}  \label{bet}
\widehat{\gamma }_{n} = \arg \max_{\gamma\in B_p} \left[ n h
^{1/2} Q_{n}(\gamma)/\widehat{v}_{n}(\gamma) - \alpha _{n} \mathbb{I}_{\left\{
\gamma \neq \gamma_0^{(p)}  \right\}} \right] \;,
\end{equation}
where $\mathbb{I}_A$ is the indicator function of a set $A$,  and $\alpha_n$, $n\geq 1$ is a sequence of positive real numbers increasing to infinity at an appropriate rate that depends on the sample size and the rates of $h$ and $p$ and that will be made explicit below. Let us notice that the maximization used to define $\widehat{\gamma }_{n}\in\mathcal{S}^p$ is a finite dimension optimization problem. The choice of $\beta_0$, and thus of $\gamma_0^{(p)}$, is theoretically irrelevant, it does not affect the asymptotic critical values and the consistency results. However, in practice the choice of $\beta_0$ could be related to a priori information of the practitioner on a class of alternatives, like for instance the class of functions depending only on $\langle X,\beta_0\rangle$. The empirical investigation we report in section \ref{cesar_is_the_best} suggests that working with a standardized version of $Q_{n}\left(\gamma \right)$ simplifies the choice of $\alpha_n$ in applications.

We will prove that with  suitable rates of increase for $\alpha_n$ and $p$ and decrease for $h$, the probability of the event $\{
\widehat{\gamma}_{n} = \gamma_{0}^{(p)}\}$ tends to 1 under $H_{0}$.
Hence $Q_{n}(\widehat{\gamma}_{n})/\widehat{v}_{n}(\widehat{\gamma})$ behaves asymptotically like $
Q_{n}(\gamma_{0}^{(p)})/\widehat{v}_{n}(\gamma_0^{(p)})$, even when $p$ grows with the sample size.
Therefore the test statistic we consider is
\begin{equation}\label{test_stat}
T_{n} = n h^{1/2} \frac{Q_{n}(\widehat{\gamma}_{n})}{
\widehat{ v}_{n} (\widehat{\gamma}_{n})} \; .
\end{equation}
We will show that an asymptotic $\alpha$-level test is given by $\mathbb{I} \left(
T_{n}\geq z_{1- \alpha} \right) $, where $z_{1-\alpha}$ is the
$(1-\alpha)$-th quantile of the standard normal distribution.

\subsection{Estimating the variance}\label{est_var_sec}

To find the direction $\widehat\gamma_n$ and to build the test statistics (\ref{test_stat}), we need to estimate in some way the variance of $ nh^{1/2}Q_{n}( \gamma) $. The approach that is expected not to inflate the variance estimate under the alternatives and thus to guarantee better power small finite samples would involve the estimations of the conditional variance of $ nh^{1/2}Q_{n}(\gamma) $ given $X_i$'s
which writes
\begin{equation}\label{th_var_q}
\tau_{n}^{2}\left( \gamma \right) =\frac{2}{n(n-1)h}\sum\limits_{j\neq i}
\sigma_p^{2}(X_{i}^{(p)})\sigma_p ^{2}(X_{j}^{(p)})  K_{h}^{2}\left(
\langle X_{i}-X_{j},\gamma \rangle  \right) \; ,  
\end{equation}
where $\sigma_p^{2}(X^{(p)}) = Var[U\mid X^{(p)}]$. An estimator can  be easily obtained by replacing $\sigma^2_p(\cdot)$ with an estimate in the last expression. In theory, a good solution would be to use a nonparametric estimate of the $p-$variate function $\sigma_p ^{2}(\cdot)$, but this is practically infeasible given that it expected to let $p$ to grow with the sample size. A simple and convenient solution with high-dimension covariates is then
\begin{equation}
\widehat{\tau}_{n}^{2}\left( \gamma \right) =\frac{2}{n(n-1)h}\sum\limits_{j\neq
i}U_{i}^2U_{j}^2
K_{h}^{2}\left(\langle X_{i}-X_{j},\gamma \rangle \right).
\label{var_est_tau}
\end{equation}
Since, under the null hypothesis $\widehat \gamma_n =\gamma_0^{(p)}$ with probability tending to 1, a first variance estimator we propose is
\begin{equation}\label{var_est1a}
\widehat{v}_{n}^2 (\widehat\gamma_n)= \widehat{v}_{n}^2 (\widehat\gamma_n,\gamma^{(p)}_0) =  \min \left(\widehat{\tau}_{n}^2 (\widehat \gamma_n), \widehat{\tau}_n^2 (\gamma^{(p)}_0)
\right).
\end{equation}

On the other hand, let us notice that $\tau_{n}^{2}( \gamma_0^{(p)}) - \mathbb{E}[\tau_{n}^{2}( \gamma_0^{(p)} )]$ is expected to converge to zero. Moreover, under the null hypothesis
\begin{multline*}
\mathbb{E}[\tau_{n}^{2}( \gamma_0^{(p)} )] = \mathbb{E}[\mathbb{E}[\tau_{n}^{2}( \gamma_0^{(p)} ) \mid \langle X_{1}, \gamma_0^{(p)} \rangle,\cdots, \langle X_{n}, \gamma_0^{(p)} \rangle]]\\
= \mathbb{E}\left\{ \frac{2}{n(n-1)h}\sum\limits_{j\neq i}
Var[U_i \mid \langle X_{i}, \gamma_0^{(p)} \rangle]\; Var[U_j \mid \langle X_{j}, \gamma_0^{(p)} \rangle]  K_{h}^{2}\left(
\langle X_{i}-X_{j},\gamma_0^{(p)} \rangle  \right) \right\},
\end{multline*}
and $0< \underline{\sigma}^2 \leq Var[U \mid \langle X, \gamma_0^{(p)} \rangle]\leq\overline{\sigma}^2<\infty$.  Next, notice that the conditional variance of $U$ given $\langle X, \beta_0 \rangle$ is the same under $H_0$ and under any alternative that depends only on $\langle X, \beta_0 \rangle$. Finally, notice that in any case $\mathbb{E}(U^2)\geq \mathbb{E}[Var(U\mid \langle X, \beta_0 \rangle)]$. All these facts suggest that a compromise for estimating the  variance of $ nh^{1/2}Q_{n}(
\beta_0 ^{(p)}) $ would be
\begin{equation}\label{var_est1b}
\widehat{v}_{n}^2=\widehat{v}_{n}^2 (\gamma_0^{(p)}) = \frac{2}{n(n\!-\!1)h}\!\sum\limits_{j\neq i}
\widehat\sigma^{2}_{\gamma_0^{(p)}}( \langle X_{j}, \gamma_0^{(p)} \rangle)\ \widehat\sigma^{2}_{\gamma_0^{(p)}}( \langle X_{j}, \gamma_0^{(p)} \rangle)  K_{h}^{2}\left(\!
\langle X_{i}-X_{j},  \gamma_0^{(p)} \rangle\!  \right) ,
\end{equation}
where $\widehat\sigma_{\gamma_0^{(p)}}^2 (\cdot)$ is some nonparametric estimate of the univariate function $\sigma_{\gamma_0^{(p)}}^2(t)= Var(U\mid \langle X, \gamma_0^{(p)} \rangle = t)$ satisfying the condition
\begin{equation}
\sup_{1\leq i \leq n}\left\vert \frac{\widehat{\sigma }^{2}_{\gamma_0^{(p)}}(\langle X_{i}, \gamma_0^{(p)} \rangle)}{\sigma^{2}_{\gamma_0^{(p)}}(\langle X_{i}, \gamma_0^{(p)} \rangle)} -1\right\vert =o_{\mathbb{P}}(1) \; ,  \label{nonp}
\end{equation}
Different nonparametric estimators can be used, for instance a kernel estimator like in Lavergne and Patilea (2008).
We will prove below that both variance estimators (\ref{var_est1a}) and (\ref{var_est1b}) guarantee the standard normal asymptotic critical values and consistency of our test. In simulations, better power under the alternative was obtained when using the variance estimator (\ref{var_est1b}). The drawback of this estimator is the computational cost and the choice of an additional bandwidth for the estimate $\widehat\sigma_{\gamma_0^{(p)}}^2 (\cdot)$. However, common choices of this bandwidth work well in practice.


\subsection{Behavior under the null hypothesis}\label{beh_null_hyp}

Let us introduce a first set of assumptions. Below $0_p\in\mathbb{R}^p$
denotes the null vector of dimension $p$. Moreover, $\mathcal{F}[\cdot]$ denotes the Fourier transform, cf. Rudin (1987).

\setcounter{hp}{3}

\begin{hp}
\label{D}
\hspace{-0.1cm}
\begin{enumerate}
\item[(a)] The random vectors $(U_{1},X_{1})
,\ldots ,(U_{n},X_{n})$ are independent draws from the random vector $(U,X)\in\mathbb{R} \times L^2[0,1]$ that satisfies  
$\mathbb{E}|U|^{m}<\infty $ for some $m > 11$.

\item[(b)]
$\exists \ \underline{\sigma}^{2}$ and $\overline{\sigma}^{2}$
such that  $0 < \underline{\sigma}^{2} \leq Var(U\mid X) \leq
\overline{\sigma}^{2} < \infty$ almost surely.

\item[(c)] The sets $B_p\subset\mathcal{S}^p$, $p\geq 1$ appearing in (\ref{bet}) are such that:
\begin{enumerate}
\item[(i)]
there exist constants $C_1,\delta >0$ (independent of $n$ and $p$) such that
$\forall p\geq 1$ and $\forall \gamma\in B_p$, the  variable $\langle X, \gamma \rangle$ admits a density
$f_{\gamma}(\cdot)$ and $$C_1^{-1}\leq  \int_{\mathbb{R}} \{f_{\gamma}^2 \mathbb{I}(f\leq 1) + f_{\gamma}^{2+\delta} \mathbb{I}(f > 1) \}\leq  C_1;$$

\item[(ii)] there exists $C_2,\epsilon>0$ such that
$\int_{|x|\leq \epsilon} |\mathcal{F}[f_\gamma] |^2(x)dx \geq C_2$, $\forall p\geq 1,$ $\forall \gamma\in B_p$;

\item[(iii)] the initial `guess' $\beta_0$ satisfies the condition: $\exists C_3$ such that $f_{\gamma_0^{(p)}}\leq C_3$, $\forall p\geq 1.$

\item[(iv)]
$B_p\times 0_{p^{\,\prime} - p} \subset B_{p^{\,\prime}},$ $\forall 1\leq p< p^{\,\prime}$.
\end{enumerate}
\label{ass_dc}
\end{enumerate}
\end{hp}

\setcounter{hp}{10}
\begin{hp}
\label{K} \hspace{-0.1cm}
\begin{enumerate}
\item[(a)] The kernel $K$ is a continuous  density of bounded variation with strictly positive Fourier transform on the real line.
\item[(b)] $h\rightarrow 0$ and $\left(nh^{2}\right)^{\alpha} / \ln n \rightarrow
\infty$ for some $\alpha \in (0,1)$.

\item[(c)] $p\geq 1$ increases to infinity with $n$ and there exists a constant $\lambda>0$ such that $p\ln^{-\lambda} n $ is  bounded.
\end{enumerate}
\end{hp}

Let us comment on these assumptions. The bounded variation of $K$, in particular this means $K$ is bounded, is a very mild condition that allows to easily bound covering numbers of families of functions indexed by $\gamma$. Continuity and bounded variation guarantee that $K$ can be recovered by  inverse Fourier transform. The role of technical assumption of positive Fourier, that is satisfied by triangular, normal, logistic, Student, or Laplace densities, will be explained below. In Assumption \ref{K}-(c), it is also possible to let $p$ to grow with the sample size at a polynomial rate, instead of the logarithmic rate. However, we will see below that, in theory, this could induce a loss of power for our test. There is a trade off between the moment conditions one imposes for $U$ and the range of rates allowed for the bandwidth and the growth rate for $p$~: higher moments will be needed for wider ranges and faster rates for $p$. For bandwidths and $p$ satisfying Assumption \ref{K}-(b,c) it suffices to take $m > 11$ in Assumption \ref{D}-(a); see the proof of Lemma \ref{leem1}. Let us notice that Assumption \ref{ass_dc}-(b) implies that $\forall p\geq 1$,  $0 < \underline{\sigma}^{2} \leq \mathbb{E}(U^{2}\mid X^{(p)})\leq \overline{\sigma}^{2} < \infty$ almost surely.
Finally, let us comment on Assumption \ref{ass_dc}-(c). On one hand, a key issue in the proof of Lemma \ref{leem1} below and some of the subsequent proofs will be to control the rate of $\mathbb{E}[h^{-1}K_h(\langle X_1 - X_2,\gamma \rangle)]$ uniformly in $\gamma\in B_p$ as $p$ grows and $h$ decreases with the sample size. To reduce technicalities, we choose the convenient solution that consists in trying to   bound this quantity by a constant. Using the Fourier transform and Plancherel theorem, this is guaranteed by a condition like $\int_{\mathbb{R}} f_{\gamma}^2 \leq  C_1$, $\forall \gamma\in B_p$.
In the proofs for the functional linear model we have to strengthen this condition and add $\int_{\mathbb{R}} f_{\gamma}^{2+\delta} \mathbb{I}(f > 1) \leq  C_1,$ $\forall \gamma\in B_p$, for some arbitrary small $\delta>0.$
Such sufficient conditions could be easily achieved for instance if the coefficients $x_j$ of the expansion of $X$ are independent. Then it suffices to fix some $k\geq 1$ such that the density of $x_k$ is bounded and some small $c$ independent of $p$  and to take $B_p = \{(\gamma_1,\cdots,\gamma_k,\cdots,\gamma_p) \in\mathcal{S}^p : |\gamma_k|\geq c\}$. This simple idea could be useful in many other cases than the one of independent coefficients $x_j$. On the other hand, we have to keep the variance estimate in the denominator of the test statistic (\ref{test_stat}) away from zero. For this we have to ensure that  $\mathbb{E}[h^{-1}K_h^2(\langle X_1 - X_2,\gamma \rangle)]$ is bounded away from zero uniformly in $\gamma\in B_p$ as $p$ grows and $h$ decreases with the sample size. One easy way to ensure this is to use again the Fourier transform properties, the positiveness of  $\mathcal{F}[K]$ and to impose the positive uniform lower bound for the integral of square of $\mathcal{F}[f_\gamma]$ in a neighborhood of the origin, which necessarily induces a uniform lower bound for $\int_{\mathbb{R}} f_{\gamma}^2.$
Assumptions \ref{ass_dc}-(c)(iii) will complete the sufficient conditions for deriving standard normal critical values using  the central limit theorem for $U-$statistics of  Guerre and Lavergne (2005, Lemma 2).
To summarize, the choice of $\beta_0$ and $B_p$ will be decided in the applications and will also depend on the law of $X$ and the choice of the basis $\mathcal{R}$. In view of our extensive simulation experiment, we argue that the choice of $B_p$ is not an issue in applications, one can confidently perform the optimization on the whole hypersphere $\mathcal{S}^p$.
Finally, the condition $B_p\times 0_{p^{\,\prime} - p} \subset B_{p^{\,\prime}},$  $\forall p< p^{\,\prime},$ is a mild technical condition that combined with Lemma \ref{lem1}-(A) greatly simplifies the proof of the consistency of our test.

The first step is the study of the behavior of the process $Q_{n}(\gamma),$ $\gamma\in B_p$, under $H_{0}$ when $p$ is allowed to increase with the sample size.

\begin{lem}
\label{leem1}  Under  Assumptions \ref{D} and \ref{K} and if $H_0$ holds true,
$$\sup_{\gamma \in B_p\subset\mathcal{S}^p}|Q_{n}(\gamma)|= O_{\mathbb{P}
}(n^{-1}h^{-1/2}p^{3/2} \ln n) .$$ Moreover, if $\widehat \tau_n^2 (\gamma)$ is the estimate defined in equation (\ref{var_est_tau}),
\begin{equation*}
\sup_{\gamma \in B_p\subset\mathcal{S}^p} \{1/\widehat \tau_n^2 (\gamma)\} = O_{\mathbb{P}}(1).
\end{equation*}
If in addition condition (\ref{nonp}) holds true,
$1/ \widehat v_n^2 = O_{\mathbb{P}}(1)$ with $\widehat v_n^2 $  defined in (\ref{var_est1b}).
\end{lem}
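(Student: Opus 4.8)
The plan is to bound the three quantities in turn, using a common strategy: express each as a $U$-statistic, replace its kernel by the leading deterministic term via Fourier/Plancherel arguments, and then control the uniform-in-$\gamma$ fluctuation by a chaining argument over $B_p\subset\mathcal{S}^p$ whose covering numbers are polynomial in $p$ (this is where the bounded-variation assumption on $K$ enters). For the first assertion, under $H_0$ we have $\mathbb{E}(U_i\mid X_i)=0$, so $Q_n(\gamma)$ is a degenerate (mean-zero) $U$-statistic of order $2$ with kernel $h^{-1}U_iU_jK_h(\langle X_i-X_j,\gamma\rangle)$. The first step is a Hoeffding-type decomposition and a moment/maximal-inequality bound for degenerate $U$-statistics applied uniformly over a finite $\epsilon$-net of $B_p$, followed by a Lipschitz-in-$\gamma$ continuity estimate to pass from the net to the supremum. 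The net has cardinality $\lesssim (1/\epsilon)^{p-1}$, so its logarithm is $O(p\ln n)$ after choosing $\epsilon$ polynomially small in $n$; combining with the $U$-statistic variance of order $n^{-2}h^{-1}$ (using Assumption~\ref{K}-(a) to get $\mathbb{E}[h^{-1}K_h^2(\langle X_1-X_2,\gamma\rangle)]=O(h^{-1})$ via Plancherel and Assumption~\ref{ass_dc}-(c)(i)) yields the rate $n^{-1}h^{-1/2}p^{3/2}\ln n$; the extra $\sqrt p$ over the naive $\sqrt{p\ln n}$ comes from the Lipschitz constant of $\gamma\mapsto Q_n(\gamma)$ on $\mathcal{S}^p$, which grows with $p$, and from balancing the net resolution against it. The moment condition $m>11$ in Assumption~\ref{D}-(a) is exactly what is needed to make the tail bounds in the maximal inequality summable at these rates for $h,p$ satisfying Assumption~\ref{K}-(b,c).

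For the second assertion I would work with $\widehat\tau_n^2(\gamma)$ directly. Write $\widehat\tau_n^2(\gamma)=\mathbb{E}[\widehat\tau_n^2(\gamma)]+(\widehat\tau_n^2(\gamma)-\mathbb{E}[\widehat\tau_n^2(\gamma)])$. The expectation term is $2(nh)^{-1}\!\sum_{i\neq j}\mathbb{E}[U_i^2U_j^2 h^{-1}K_h^2(\langle X_i-X_j,\gamma\rangle)]$, which, after conditioning on $\langle X_i,\gamma\rangle,\langle X_j,\gamma\rangle$, is a weighted version of $\mathbb{E}[h^{-1}K_h^2(\langle X_1-X_2,\gamma\rangle)]$; by Plancherel, positivity of $\mathcal{F}[K]$ (Assumption~\ref{K}-(a)), and the lower bound $\int_{|x|\le\epsilon}|\mathcal{F}[f_\gamma]|^2\ge C_2$ in Assumption~\ref{ass_dc}-(c)(ii), this is bounded below by a positive constant uniformly in $\gamma\in B_p$ and $p$, and the weights are controlled from below by $\underline{\sigma}^4$ using Assumption~\ref{D}-(b). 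Then I would show $\sup_{\gamma\in B_p}|\widehat\tau_n^2(\gamma)-\mathbb{E}[\widehat\tau_n^2(\gamma)]|=o_{\mathbb{P}}(1)$ by the same net-plus-Lipschitz chaining as above (again a degenerate-plus-projection $U$-statistic decomposition, the moment requirement being why $m>11$ rather than $m>4$ is imposed, since now fourth powers of $U$ appear inside). Consequently $\inf_{\gamma\in B_p}\widehat\tau_n^2(\gamma)$ is bounded away from $0$ in probability, i.e. $\sup_{\gamma\in B_p}\{1/\widehat\tau_n^2(\gamma)\}=O_{\mathbb{P}}(1)$.

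For the third assertion, $\widehat v_n^2=\widehat v_n^2(\gamma_0^{(p)})$ is a single-$\gamma$ quantity (no supremum over $\gamma$), so chaining is unnecessary; I would only need a law-of-large-numbers-type argument at the fixed direction $\gamma_0^{(p)}$. Under condition (\ref{nonp}) one may replace $\widehat\sigma^2_{\gamma_0^{(p)}}$ by the true $\sigma^2_{\gamma_0^{(p)}}$ up to a $(1+o_{\mathbb{P}}(1))$ factor uniformly over the sample; then the leading term is $2(nh)^{-1}\!\sum_{i\neq j}\sigma^2_{\gamma_0^{(p)}}(\langle X_i,\gamma_0^{(p)}\rangle)\sigma^2_{\gamma_0^{(p)}}(\langle X_j,\gamma_0^{(p)}\rangle)h^{-1}K_h^2(\langle X_i-X_j,\gamma_0^{(p)}\rangle)$, whose expectation is bounded below by $\underline{\sigma}^4\,\mathbb{E}[h^{-1}K_h^2(\langle X_1-X_2,\gamma_0^{(p)}\rangle)]$, again positive and bounded away from zero by Plancherel and Assumption~\ref{ass_dc}-(c)(ii); concentration at the single direction $\gamma_0^{(p)}$ is a routine $U$-statistic variance bound. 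Hence $1/\widehat v_n^2=O_{\mathbb{P}}(1)$.

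The main obstacle is the uniform-in-$\gamma$ control of the degenerate $U$-statistics as $p\to\infty$: one must simultaneously track how the covering number of $B_p$ (polynomial in $p$), the Lipschitz modulus of $\gamma\mapsto Q_n(\gamma)$ (growing in $p$), and the $U$-statistic tail bounds (requiring the moment $m>11$) interact, and verify that the combination still produces the claimed $p^{3/2}\ln n$ factor rather than something worse. Keeping the $h^{-1}K_h$ and $h^{-1}K_h^2$ expectations bounded above and below \emph{uniformly} in $\gamma\in B_p$ and $p$ — which is exactly what Assumption~\ref{ass_dc}-(c)(i)--(ii) is designed to deliver via the Fourier transform — is the other delicate point, since without it the normalization could degenerate as $p$ grows.
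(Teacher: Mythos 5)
Your overall strategy coincides with the paper's: all three assertions rest on uniform control of degenerate $U$-processes indexed by $\gamma\in B_p$ via covering numbers that are polynomial in $1/\varepsilon$ with exponent proportional to $p$ (the paper's Lemma \ref{entropy}, built on the bounded-variation structure of $K$), combined with the Plancherel/positive-Fourier argument that keeps $\mathbb{E}[h^{-1}K_h^l(\langle X_1-X_2,\gamma\rangle)]$ bounded above and below uniformly in $\gamma\in B_p$ and $p$ through Assumption \ref{ass_dc}-(c)(i)--(ii). Your treatment of the second and third assertions matches the paper's, which reduces them respectively to Lemma \ref{tech_u_stat} (lower bound on the expectation by $\underline{\sigma}^4\mathbb{E}[h^{-1}K_h^2]$ plus uniform concentration) and to a fixed-direction variance computation at $\gamma_0^{(p)}$ (Lemma \ref{rav_u}).

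The one step where your plan as written does not close is the uniform bound on $Q_n(\gamma)$, and the missing idea is \emph{truncation}. Any exponential maximal inequality for degenerate $U$-processes over VC-type classes --- the paper uses Theorem 2 of Major (2006) --- requires a \emph{bounded} kernel, whereas $U_iU_jK_h(\langle X_i-X_j,\gamma\rangle)$ is unbounded, $U$ having only $m>11$ moments. Your alternative of letting the moment condition ``make the tail bounds in the maximal inequality summable'' over an $\varepsilon$-net cannot work: the net has cardinality of order $n^{cp}$ with $p\to\infty$, and a union bound based on polynomial moments of $Q_n(\gamma)$ produces a factor of order $n^{cp/m}$ that destroys the rate. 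The paper instead replaces $U_i$ by $\eta_i^M=U_i\1(|U_i|\le M)-\mathbb{E}[U_i\1(|U_i|\le M)\mid X_i^{(p)}]$ --- note the re-centering, which restores degeneracy after truncation --- and applies Major's inequality to the bounded truncated $U$-process with $M^4=nhp^{-3/2}\ln^{-(1+\delta)}n$. It is this choice of $M$, fed into Major's entropy condition with VC exponent proportional to $p$, that produces the $p^{3/2}\ln n$ factor, not a Lipschitz modulus of $\gamma\mapsto Q_n(\gamma)$ as you suggest. The truncation remainders $\frac{1}{n(n-1)}\sum_{i\ne j}\eta_i^M\xi_jK_h(\cdot)$ and $\frac{1}{n(n-1)}\sum_{i\ne j}\xi_i\xi_jK_h(\cdot)$, with $\xi_i=U_i-\eta_i^M$, are then bounded by H\"older and Chebyshev; this is the sole place $m>11$ enters, ensuring $M^{1-m}=o(n^{-1}h^{1/2}p^{3/2}\ln n)$. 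The same truncate--recenter--remainder device is needed again for the concentration of $\widehat\tau_n^2(\gamma)$, whose kernel involves $U_i^2U_j^2$. Adding this step to your outline makes it match the paper's proof.
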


We now describe the behavior of $\widehat{\gamma }_{n}$ under $H_{0}$. A suitable rate $\alpha_n$ will make $\widehat{ \gamma}_{n}$ to be equal to $\gamma_{0}^{(p)}$ with high probability. Under the null, $\alpha_n$ has to grow to infinity sufficiently fast to render the probability of the event $\{ \widehat{ \gamma}_{n} = \gamma_{0}^{(p)} \}$ close to 1. We will see below that, for better detection of   alternative hypothesis,  $\alpha_n$ should grow as slow as possible. Indeed, slower rates for $\alpha_n$ will allow the selection of  directions $\hat\gamma_n$  that could be better suited than $\gamma_0^{(p)}$ for revealing the departure from the null hypothesis. The rate of $p$ is also involved in the search of a trade-off for the rate of $\alpha_n$: larger $p$ renders slower the rate of uniform convergence to zero of $Q_n (\gamma)$, $\gamma\in B_p$, and hence requires larger $\alpha_n$.

\begin{lem}
\label{beta} Under Assumptions \ref{D}, \ref{K}, and condition (\ref{nonp}) if the variance estimator is the one defined in (\ref{var_est1b}), for a positive sequence
$\alpha _{n} $, $n\geq 1$ such that $\alpha _{n}/\{ p^{3/2} \ln n \} \rightarrow
\infty$,  $$\mathbb{P}(\widehat{ \gamma}_{n} = \gamma_{0}^{(p)})
\rightarrow 1, \quad \text{ under } H_{0}.$$
\end{lem}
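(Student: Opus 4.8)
The plan is to compare the value of the penalized criterion at the candidate $\gamma_0^{(p)}$ with its value at any competing $\gamma\in B_p$, and to show that the penalty term $\alpha_n\mathbb{I}_{\{\gamma\neq\gamma_0^{(p)}\}}$ dominates the fluctuation of the standardized statistic $nh^{1/2}Q_n(\gamma)/\widehat v_n(\gamma)$ uniformly over $B_p$. Concretely, on the event $\{\widehat\gamma_n\neq\gamma_0^{(p)}\}$ the definition of $\widehat\gamma_n$ forces
$$
nh^{1/2}\frac{Q_n(\widehat\gamma_n)}{\widehat v_n(\widehat\gamma_n)} - \alpha_n \;\geq\; nh^{1/2}\frac{Q_n(\gamma_0^{(p)})}{\widehat v_n(\gamma_0^{(p)})},
$$
hence
$$
\alpha_n \;\leq\; nh^{1/2}\frac{|Q_n(\widehat\gamma_n)|}{\widehat v_n(\widehat\gamma_n)} + nh^{1/2}\frac{|Q_n(\gamma_0^{(p)})|}{\widehat v_n(\gamma_0^{(p)})} \;\leq\; 2\,nh^{1/2}\,\frac{\sup_{\gamma\in B_p}|Q_n(\gamma)|}{\inf_{\gamma\in B_p}\widehat v_n(\gamma)}.
$$
So it suffices to show that the right-hand side is $o_{\mathbb{P}}(\alpha_n)$, which then makes $\mathbb{P}(\widehat\gamma_n\neq\gamma_0^{(p)})\to 0$.

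The two ingredients needed for this are exactly supplied by Lemma \ref{leem1}. First, under $H_0$, $\sup_{\gamma\in B_p}|Q_n(\gamma)| = O_{\mathbb{P}}(n^{-1}h^{-1/2}p^{3/2}\ln n)$, so $nh^{1/2}\sup_{\gamma\in B_p}|Q_n(\gamma)| = O_{\mathbb{P}}(p^{3/2}\ln n)$. Second, depending on which variance estimator is used, Lemma \ref{leem1} gives either $\sup_{\gamma\in B_p}\{1/\widehat\tau_n^2(\gamma)\} = O_{\mathbb{P}}(1)$ — hence, for the estimator (\ref{var_est1a}), $\inf_{\gamma\in B_p}\widehat v_n^2(\widehat\gamma_n)\geq \min(\inf_\gamma\widehat\tau_n^2(\gamma),\widehat\tau_n^2(\gamma_0^{(p)}))$ is bounded away from zero in probability — or, under condition (\ref{nonp}), $1/\widehat v_n^2 = O_{\mathbb{P}}(1)$ for the estimator (\ref{var_est1b}), whose value does not depend on $\gamma$. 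In either case $\inf_{\gamma\in B_p}\widehat v_n(\gamma)$ (or the single relevant $\widehat v_n$) is bounded below by a positive $O_{\mathbb{P}}(1)$ quantity. Combining,
$$
2\,nh^{1/2}\,\frac{\sup_{\gamma\in B_p}|Q_n(\gamma)|}{\inf_{\gamma\in B_p}\widehat v_n(\gamma)} \;=\; O_{\mathbb{P}}(p^{3/2}\ln n).
$$
Since by hypothesis $\alpha_n/\{p^{3/2}\ln n\}\to\infty$, the displayed bound forces $\alpha_n = O_{\mathbb{P}}(p^{3/2}\ln n)$ on the event $\{\widehat\gamma_n\neq\gamma_0^{(p)}\}$, which is incompatible with $\alpha_n/\{p^{3/2}\ln n\}\to\infty$ except on an event of vanishing probability. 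Formally: $\mathbb{P}(\widehat\gamma_n\neq\gamma_0^{(p)}) \leq \mathbb{P}\big(2\,nh^{1/2}\sup_{\gamma}|Q_n(\gamma)|/\inf_\gamma\widehat v_n(\gamma) \geq \alpha_n\big)\to 0$ because the left factor is $O_{\mathbb{P}}(p^{3/2}\ln n)$ while $\alpha_n$ grows strictly faster.

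The only genuinely substantive work is already done inside Lemma \ref{leem1}; the present argument is a short deterministic manipulation followed by an appeal to those two stochastic bounds. The one point deserving a little care is the handling of $\widehat v_n(\widehat\gamma_n)$ for the estimator (\ref{var_est1a}): since $\widehat\gamma_n$ is random and data-dependent, one should not plug it into a pointwise bound but rather note that $\widehat v_n^2(\widehat\gamma_n) = \min(\widehat\tau_n^2(\widehat\gamma_n),\widehat\tau_n^2(\gamma_0^{(p)})) \geq \min(\inf_{\gamma\in B_p}\widehat\tau_n^2(\gamma),\widehat\tau_n^2(\gamma_0^{(p)}))$, and then invoke the uniform lower bound $\inf_{\gamma\in B_p}\{1/\widehat\tau_n^2(\gamma)\}=O_{\mathbb{P}}(1)$ from Lemma \ref{leem1}. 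For the estimator (\ref{var_est1b}) this subtlety disappears entirely because $\widehat v_n^2$ does not depend on $\gamma$ at all. I expect no other obstacle: the choice of threshold $\alpha_n/\{p^{3/2}\ln n\}\to\infty$ is precisely calibrated so that this comparison goes through.
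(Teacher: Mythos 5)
Your proof is correct and follows essentially the same route as the paper: both arguments exploit the defining inequality of the penalized maximizer to show that on $\{\widehat\gamma_n\neq\gamma_0^{(p)}\}$ the penalty $\alpha_n$ is dominated by $2\,nh^{1/2}\sup_{\gamma\in B_p}|Q_n(\gamma)|$ times the uniform bound on $1/\widehat v_n(\gamma)$, both of which are controlled by Lemma \ref{leem1}, yielding $O_{\mathbb{P}}(p^{3/2}\ln n)=o_{\mathbb{P}}(\alpha_n)$. The only cosmetic difference is that the paper bounds the indicator $\mathbb{I}(\widehat\gamma_n\neq\gamma_0^{(p)})$ itself and passes to its expectation, whereas you bound the probability of the event directly; your extra care about the data-dependent $\widehat v_n(\widehat\gamma_n)$ via the uniform infimum is exactly what the paper's $\max[\sup_\gamma\{1/\widehat\tau_n^2(\gamma)\},\,1/\widehat v_n^2]$ accomplishes.
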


The following result shows that the asymptotic critical values of our test statistic are standard normal.

\begin{theor}
\label{as_law} Under the conditions of Lemma \ref{beta} and if the hypothesis $H_0$ in (\ref{hh0}) holds true, the  test statistic $
T_{n} $ converges in law to a standard normal. Consequently, the test given by $\mathbb{I}(T_n \geq z_{1- a})$, with $z_a$  the $(1- a)-$quantile of the standard normal distribution,  has asymptotic level  $ a.$
\end{theor}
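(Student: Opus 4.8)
The plan is to reduce the analysis of $T_n$ to that of $n h^{1/2} Q_n(\gamma_0^{(p)})/\widehat v_n(\gamma_0^{(p)})$ and then apply a central limit theorem for degenerate $U$-statistics, tracking the dependence on $p$. First I would invoke Lemma \ref{beta}: under $H_0$ and the stated rate $\alpha_n/\{p^{3/2}\ln n\}\to\infty$, the event $\{\widehat\gamma_n=\gamma_0^{(p)}\}$ has probability tending to $1$. On this event $T_n = n h^{1/2} Q_n(\gamma_0^{(p)})/\widehat v_n(\gamma_0^{(p)})$ exactly, so it suffices to establish the standard normal limit for this single-direction statistic; the contribution on the complementary event is $o_{\mathbb{P}}(1)$ in the sense that it does not affect convergence in distribution, since $\mathbb{P}(\widehat\gamma_n\neq\gamma_0^{(p)})\to 0$.

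Next I would handle the variance normalization. Write $Z_i = U_i$ and note that $n h^{1/2} Q_n(\gamma_0^{(p)})$ is a degenerate (under $H_0$, since $\mathbb{E}(U\mid\langle X,\gamma_0^{(p)}\rangle)=0$ by Lemma \ref{lem1}-(A)) $U$-statistic with symmetric kernel $h^{-1/2} U_i U_j K_h(\langle X_i-X_j,\gamma_0^{(p)}\rangle)$. Its exact conditional variance given the covariates is $\tau_n^2(\gamma_0^{(p)})$ from \eqref{th_var_q}; I would show $\tau_n^2(\gamma_0^{(p)}) - \mathbb{E}[\tau_n^2(\gamma_0^{(p)})]=o_{\mathbb{P}}(\mathbb{E}[\tau_n^2(\gamma_0^{(p)})])$ using a second-moment computation (the bounds $\underline\sigma^2\le \mathrm{Var}(U\mid\cdot)\le\overline\sigma^2$ and Assumption \ref{ass_dc}-(c)(iii) keep $\mathbb{E}[\tau_n^2]$ bounded above and, via the positive-Fourier lower bound, bounded away from zero uniformly as $p$ grows). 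Then, for the estimator \eqref{var_est1b}, condition \eqref{nonp} and $1/\widehat v_n^2 = O_{\mathbb{P}}(1)$ from Lemma \ref{leem1} give $\widehat v_n^2(\gamma_0^{(p)})/\mathrm{Var}(n h^{1/2}Q_n(\gamma_0^{(p)}))\to 1$ in probability; for the estimator \eqref{var_est1a} one argues similarly via $\widehat\tau_n^2$ and the fact that on $H_0$ replacing $U_i^2$ by $\sigma_p^2(X_i^{(p)})$ is asymptotically negligible after centering. By Slutsky it remains to prove asymptotic normality of $n h^{1/2} Q_n(\gamma_0^{(p)})/\{\mathrm{Var}\}^{1/2}$.

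For that central step I would appeal to the CLT for degenerate $U$-statistics with kernels changing with $n$, in the form of Guerre and Lavergne (2005, Lemma 2) (or de Jong's theorem): it suffices to verify that the ratio $\mathbb{E}[G_n^4]/\{\mathbb{E}[G_n^2]\}^2 \to 0$ and $\mathbb{E}[g_n^2(X_1,X_2)]/\{\mathrm{Var}\}\to 0$, where $G_n$ is the quadratic form and $g_n$ its kernel, i.e. the standard Lindeberg-type conditions ensuring the Gaussian limit. Here the only new feature relative to the fixed-$p$ case is that the conditioning variable is $\langle X,\gamma_0^{(p)}\rangle$ with $\gamma_0^{(p)}$ varying in $p$; but because $\gamma_0^{(p)}$ is a fixed deterministic direction and Assumption \ref{ass_dc}-(c)(iii) forces $f_{\gamma_0^{(p)}}\le C_3$ uniformly in $p$, together with the moment condition $\mathbb{E}|U|^m<\infty$ for $m>11$ and Assumption \ref{K}-(b), all the required moment ratios are controlled exactly as in the univariate case — the density bound makes the problem effectively one-dimensional and $p$-free. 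I would therefore obtain $T_n \cvd \mathrm{N}(0,1)$, and the level statement follows immediately since $\mathbb{P}_{H_0}(T_n\ge z_{1-a})\to 1-\Phi(z_{1-a})=a$.

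The main obstacle is the uniform (in $p$) control of the variance: showing $\mathbb{E}[\tau_n^2(\gamma_0^{(p)})]$ is bounded above and below by positive constants independent of $p$, and that $\widehat v_n^2$ consistently estimates it, requires the Fourier/Plancherel arguments sketched in the discussion after Assumption \ref{K} and careful bookkeeping of the $h\to0$, $p\to\infty$ interaction; the rest is a routine, if lengthy, adaptation of the fixed-dimension kernel $U$-statistic CLT.
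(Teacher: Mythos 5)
Your proposal follows essentially the same route as the paper's proof: reduce to the single direction $\gamma_0^{(p)}$ via Lemma \ref{beta}, establish consistency of the variance normalization (the content of Lemma \ref{rav_u} in the appendix, which relates $\widehat\tau_n^2$, $\widehat v_n^2$, $\tau_n^2$ and $v_n^2$), and invoke the CLT for degenerate $U$-statistics of Guerre and Lavergne (2005, Lemma 2), with Assumption \ref{ass_dc}-(c)(iii) controlling the dependence on $p$. The only blemish is your statement of the CLT condition as $\mathbb{E}[G_n^4]/\{\mathbb{E}[G_n^2]\}^2\to 0$, which cannot hold as written (that ratio is at least $1$); the correct Hall/de Jong-type condition involves fourth moments of the \emph{kernel} normalized by $n^2$ times the squared second moment (equivalently, the fourth moment of the standardized statistic tending to $3$), but this is a notational slip rather than a gap in the approach.
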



\subsection{The behavior under the alternatives}

First let us give an intuition on the reason why our test is consistent. Consider the alternative hypothesis $$H_1:\; \mathbb{P}[\mathbb{E}(U\mid X)=0] <1.$$  The way the statistic $T_n$ is constructed guarantees the consistency of our test against  $H_1.$ Indeed, we can write
\begin{eqnarray}
T_{n} &=&\frac{nh^{1/2}Q_{n}(
\widehat{\gamma }_{n})}{ \widehat{v}_{n}(\widehat{\gamma }_{n}) }  \notag \\
& = & \max_{\gamma\in B_p}\left\{ nh^{1/2} Q_{n}(\gamma )/ \widehat{v}_{n}(\gamma )
-\alpha _{n} \mathbb{I}_{\{
\gamma \neq \gamma_{0}^{(p)}\}} \right\} +\alpha _{n} \mathbb{I} _{\{
\widehat\gamma_n \neq \gamma_{0}^{(p)}\}}   \notag \\
&\geq &  \frac{\max_{\gamma\in B_p} nh^{1/2} Q_{n}(\gamma)}{\widehat{v}_{n}(\gamma_{0}^{(p)}) } -\alpha _{n}
\geq \frac{nh^{1/2} Q_{n}(\gamma ) }{\widehat{v}_{n}(\gamma_{0}^{(p)})} -\alpha_{n}, \quad \forall \gamma\in B_p\subset\mathcal{S},
 \label{eqaa}
\end{eqnarray}
with $\widehat{v}_{n}(\gamma_{0}^{(p)})$ equal to  $\widehat{\tau}_{n}(\gamma_{0}^{(p)})$ defined in  (\ref{var_est1a}) or equal to $\widehat{v}_{n}$ defined in (\ref{var_est1b}). Since $\mathbb{E}(U^2\mid X) \geq \underline{\sigma}^2 $ and $Var(U\mid \langle X,\gamma_0^{(p)}\rangle) \geq \underline{\sigma}^2 $, it is clear that
$1/\widehat{v}_{n}(\gamma_{0}^{(p)})=O_{\mathbb{P}}(1)$ for both variance estimates introduced above. On the other hand, from Lemma \ref{lem1}, there exists $p_0$ and $\widetilde \gamma\in B_{p_0}$ such that the expectation of $ Q_{n}(\widetilde \gamma )$ stays away from zero as the sample size grows to infinity and $h$ decrease to zero. On the other hand, for any $p>p_0$ and any $n$ and $h$, clearly $\max_{\gamma \in B_p} Q_{n}(\gamma ) \geq Q_{n}(\widetilde \gamma )$, because $B_{p_0} \times 0_{p-p_0} \subset B_{p}$. All these facts show why our test is omnibus, that is consistent against nonparametric alternatives, provided that $p\rightarrow \infty.$

To formalize the consistency result, let us fix some real-valued function $\delta(X)$ such that $\mathbb{E}[\delta(X)] = 0$ and $0<\mathbb{E}[\delta^4(X)]<\infty$,  and some sequence of real numbers $r_n$ that could decrease to zero (the case $r_n\equiv 1$ is also included). Consider the sequence of alternatives
\begin{equation}\label{pitman_alt}
H_{1n}:\; U = U^0 + r_n \delta(X),\quad  n\geq 1, \quad\text{with} \;\; \mathbb{E}(U^0\mid X) = 0.
\end{equation}
We show below that such directional alternatives can be detected as soon as $r_n^2 n h^{1/2}  / \alpha_n $ tends to infinity. This is exactly the same condition as in Lavergne and Patilea (2008). However, in the functional data framework, to obtain the convenient standard normal critical values,  we need   $1/\alpha_n =o(p^{-3/2}\ln^{-1} n)$. Hence, the rate $r_n$ at which the alternatives $H_{1n}$ tend to the null hypothesis should satisfy  $r_n^{2}n h^{1/2}/\{p^{3/2}\ln n\}\rightarrow \infty $.

\begin{theor}
\label{altern}
Suppose that
\begin{enumerate}
\item[(a)] Assumption \ref{D}  holds true with $U$ replaced by $U^0$;
 \item[(b)] Assumption \ref{K} is satisfied, and so is the condition (\ref{nonp}) if the variance estimator is the one defined in (\ref{var_est1b});
  \item[(c)] $\alpha _{n}/\{ p^{3/2} \ln n \} \rightarrow
\infty$ and
$r_n $, $n\geq 1$ is such that $r_n^2 n h^{1/2}  / \alpha_n\rightarrow \infty$;
  \item[(d)] $\mathbb{E}[\delta(X)] = 0$ and $0<\mathbb{E}[\delta^4(X)]<\infty$.
\end{enumerate}
Then the test based on $T_n$ is consistent against the sequence of alternatives $H_{1n}$  if there exists $ p\geq 1$ and  $\widetilde \gamma\in B_p$ such that $\mathbb{P}(\mathbb{E}[\delta(X)\mid \langle X,\widetilde \gamma\rangle]=0)<1$ and one of the following conditions is satisfied:
\begin{enumerate}
\item the density $f_{\widetilde \gamma}$ is bounded;
\item the function $\mathbb{E}[\delta(X)\mid \langle X,\widetilde \gamma\rangle = \cdot ]f_{\widetilde\gamma} (\cdot)$ is bounded;
\item the Fourier transform  of $\mathbb{E}[\delta(X)\mid \langle X,\widetilde \gamma\rangle = \cdot ]f_{\widetilde\gamma} (\cdot)$ is integrable on $\mathbb{R}$.
\end{enumerate}

\end{theor}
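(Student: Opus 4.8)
The plan is to lower bound $T_n$ by means of the chain of inequalities (\ref{eqaa}). Fix $p_0$ and $\widetilde\gamma\in B_{p_0}$ as in the statement. By Assumption \ref{ass_dc}-(c)(iv) the padded vector $(\widetilde\gamma,0_{p-p_0})$ lies in $B_p$ for every $p\ge p_0$, and since $\langle X_i-X_j,(\widetilde\gamma,0_{p-p_0})\rangle=\langle X_i^{(p_0)}-X_j^{(p_0)},\widetilde\gamma\rangle$, the value $Q_n$ takes at this padded vector is exactly $Q_n(\widetilde\gamma)$ computed in $\mathbb{R}^{p_0}$. Hence it is enough to analyse $Q_n(\widetilde\gamma)$ for this single fixed direction. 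From (\ref{eqaa}), with $V_n$ denoting $\widehat\tau_n(\gamma_0^{(p)})$ when the variance estimator is (\ref{var_est1a}) and $\widehat v_n$ when it is (\ref{var_est1b}) (in the first case one also uses that $\widehat v_n^2(\gamma)\le\widehat\tau_n^2(\gamma_0^{(p)})$ together with $Q_n(\widetilde\gamma)>0$, which holds with probability tending to one by the third step below), one gets
$$T_n\;\ge\;\frac{nh^{1/2}Q_n(\widetilde\gamma)}{V_n}\;-\;\alpha_n.$$

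Next I would control the denominator. Since $\mathbb{E}(U^0\mid X)=0$, under $H_{1n}$ one has $\Var(U\mid X)=\Var(U^0\mid X)\in[\underline{\sigma}^2,\overline{\sigma}^2]$, and since $r_n$ is bounded the fourth moment of $U$ is bounded uniformly in $n$. Repeating the relevant part of the proof of Lemma \ref{leem1} for the \emph{single} direction $\gamma_0^{(p)}$ — which removes the need for any uniform-in-$\gamma$ control — together with Assumption \ref{ass_dc}-(c)(iii), and condition (\ref{nonp}) if (\ref{var_est1b}) is used, yields $V_n=O_{\mathbb{P}}(1)$ and $1/V_n=O_{\mathbb{P}}(1)$.

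The core step is the expansion of $Q_n(\widetilde\gamma)$ induced by $U_iU_j=U_i^0U_j^0+r_n(U_i^0\delta(X_j)+U_j^0\delta(X_i))+r_n^2\delta(X_i)\delta(X_j)$, which gives $Q_n(\widetilde\gamma)=Q_n^{0}(\widetilde\gamma)+2r_nR_n(\widetilde\gamma)+r_n^2D_n(\widetilde\gamma)$ in the obvious notation. The pure-$U^0$ term is $O_{\mathbb{P}}(n^{-1}h^{-1/2}\ln n)$ by Lemma \ref{leem1} applied to the sample $(U_i^0,X_i)$, which satisfies $H_0$ and Assumption \ref{D}, with $p=p_0$ fixed. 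For the cross term $R_n(\widetilde\gamma)$, its Hájek projection has mean zero because $\mathbb{E}(U^0\mid X)=0$, so a standard second-moment computation gives $R_n(\widetilde\gamma)=O_{\mathbb{P}}(n^{-1/2}+n^{-1}h^{-1/2})$. For $D_n(\widetilde\gamma)$, a bias--variance analysis of this non-degenerate smoothed $U$-statistic shows $\mathbb{E}[D_n(\widetilde\gamma)]\to Q^\delta(\widetilde\gamma):=\mathbb{E}\big[\mathbb{E}^2(\delta(X)\mid\langle X,\widetilde\gamma\rangle)\,f_{\widetilde\gamma}(\langle X,\widetilde\gamma\rangle)\big]$ and $\Var[D_n(\widetilde\gamma)]=O(n^{-1}+n^{-2}h^{-1})\to0$, hence $D_n(\widetilde\gamma)\cvp Q^\delta(\widetilde\gamma)$. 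It is exactly here that one of the conditions 1, 2 or 3 enters: it guarantees that $\mathbb{E}[\delta(X)\mid\langle X,\widetilde\gamma\rangle=\cdot]\,f_{\widetilde\gamma}(\cdot)\in L^2(\mathbb{R})$ so that the kernel bias vanishes and the relevant second moments are finite; moreover $\mathbb{P}(\mathbb{E}[\delta(X)\mid\langle X,\widetilde\gamma\rangle]=0)<1$ together with $f_{\widetilde\gamma}>0$ on its support forces $Q^\delta(\widetilde\gamma)>0$.

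Collecting the pieces, $nh^{1/2}Q_n(\widetilde\gamma)=nh^{1/2}r_n^2Q^\delta(\widetilde\gamma)(1+o_{\mathbb{P}}(1))+O_{\mathbb{P}}(\ln n)+O_{\mathbb{P}}\big(r_n(nh)^{1/2}\big)$, and since $r_n^2n\to\infty$ (because $r_n^2nh^{1/2}\to\infty$ by condition (c) and $h\to0$) the first term dominates the remainder, as $\ln n/(nh^{1/2}r_n^2)\to0$ (from $r_n^2nh^{1/2}/\{p^{3/2}\ln n\}\to\infty$ with $p\ge1$) and $r_n(nh)^{1/2}/(nh^{1/2}r_n^2)=1/(n^{1/2}r_n)\to0$. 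Therefore
$$T_n\;\ge\;\frac{nh^{1/2}r_n^2Q^\delta(\widetilde\gamma)(1+o_{\mathbb{P}}(1))}{V_n}-\alpha_n\;=\;\alpha_n\Big(\frac{nh^{1/2}r_n^2}{\alpha_n}\cdot\frac{Q^\delta(\widetilde\gamma)}{V_n}\,(1+o_{\mathbb{P}}(1))-1\Big)\;\cvp\;+\infty,$$
using $r_n^2nh^{1/2}/\alpha_n\to\infty$, $Q^\delta(\widetilde\gamma)>0$ and $1/V_n=O_{\mathbb{P}}(1)$. Consequently $\mathbb{P}(T_n\ge z_{1-a})\to1$, which is the asserted consistency. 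The main obstacle is the third step: obtaining a clean quantitative handle on the bias and variance of $D_n(\widetilde\gamma)$ under only one of the weak regularity conditions 1--3 (in particular, establishing $\mathbb{E}[D_n(\widetilde\gamma)]\to Q^\delta(\widetilde\gamma)$ without assuming smoothness of the conditional mean or a uniformly bounded density), and checking that the cross term $R_n(\widetilde\gamma)$ is genuinely of smaller order than $r_n^2D_n(\widetilde\gamma)$ under the stated rate on $r_n$.
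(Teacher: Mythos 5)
Your proposal is correct and follows essentially the same route as the paper: the lower bound (\ref{eqaa}), the reduction to a single fixed (padded) direction $\widetilde\gamma$, the three-term decomposition $Q_n(\widetilde\gamma)=Q_{0n}+2r_nQ_{1n}+r_n^2Q_{2n}$, and the same role assigned to conditions 1--3. The one step you flag as an obstacle --- showing $\mathbb{E}[Q_{2n}(\widetilde\gamma)]\to\int_{\mathbb{R}}(\overline \delta f_{\widetilde\gamma})^2>0$ without smoothness assumptions --- is closed in the paper by writing $\mathbb{E}[Q_{2n}(\widetilde\gamma)]=\int_{\mathbb{R}}|\mathcal{F}[\overline \delta f_{\widetilde\gamma}]|^2(t)\,\mathcal{F}[K](ht)\,dt$ and invoking Plancherel's theorem with dominated convergence, each of conditions 1--3 serving precisely to place $\overline \delta f_{\widetilde\gamma}$ in $L^2(\mathbb{R})$ (and similarly to bound the second moment of the projection of the cross term $Q_{1n}$).
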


\bigskip

Let us recall that the existence of $p$ and $\widetilde \gamma\in B_p$ such that $\mathbb{P}[\delta(X)\mid \langle X,\widetilde \gamma\rangle]=0)<1$ is guaranteed by Lemma \ref{lem1}.

\section{Testing the goodness-of-fit of parametric models}
\label{sec44}

Here we apply our projection-based testing methodology for testing the goodness-of-fit of the functional linear regression model against \emph{nonparametric} alternatives satisfying mild technical conditions. Hence we provide a simple goodness-of-fit procedure for a widely used model. To the best of our knowledge, our results are completely new in the functional regression framework.

Let $U$ be a real-valued random variable and  $X$ be a random variable with values in $L^2[0,1]$. The model we want to test is the functional linear model defined by
\begin{equation*}
Y=a + \langle b,X\rangle + U,
\end{equation*}
where $b\in L^2[0,1]$ and $a\in\mathbb{R}$ are unknown parameters.  The null hypothesis is
\begin{equation}
H_{0} :\,\, \mathbb{E} \left( U | X \right) = 0 \quad
\mbox{
\rm a.s.}  \label{hh0bis}
\end{equation}
In order to formally establish consistency against nonparametric alternatives, we will consider a sequence  of local alternatives like in (\ref{pitman_alt}).

Like in Assumption \ref{D}  we consider that $(U_1, X_1), \cdots, (U_n, X_n)$ are independent copies of  $(U,X),$
but now the observations are $(Y_1, X_1), \cdots, (Y_n, X_n)$. Hence the error term $U$  has to be estimated in a preliminary step from the estimates of the parameters $a$ and $b$. We will investigate the behavior of our test statistic under the null and under the alternatives for a
generic estimate of the slope with suitable rate of convergence. Next, we will get into the details in the standard case of the slope estimate based on the functional principal component analysis. In particular, we will see  how the difficulty of estimating the parameters in the functional regression model could perturb the properties of test. To keep the technical conditions readable, hereafter we will assume that $\mathbb{E}(|U|^m)<\infty$ for any $m\geq 1.$


\subsection{The test statistic and the behavior under the null hypothesis}
The test statistic is  similar to the one we proposed for testing the effect of a functional covariate.
Let $\beta_0$, $\gamma^{(p)}_0$, $\mathcal{S}^p$ and $B_p$ be defined as in section \ref{sec3}. Let $\widehat b\in L^2[0,1]$ denote a generic  estimator of the slope $b$ and let
$$
\widehat a = \overline{Y}_n - \int_0^1 \widehat{b}(t)\overline{X}_n(t)dt = a - \int_0^1\{\widehat{b}(t)-b(t)\}\overline{X}_n(t) dt + \overline{U}_n,
$$
where  $\overline{U}_n=n^{-1}\sum_{i=1}^n  U_i.$
 Let $\widehat{U}_i=Y_i-\widehat{a}- \langle \widehat{b},X_i \rangle$ be the residuals and let
\begin{equation*}
Q_{n}(\gamma; \widehat a,\widehat b ) =\frac{1}{n(n-1)}\sum\limits_{1\leq i\neq
j\leq n}\widehat{U}_{i} \widehat{U}_{j} \frac{1}{h}
K_{h}\left( \langle X_{i}-X_{j},\gamma\rangle  \right),\quad \gamma\in\mathcal{S}^p,
\end{equation*}%
where  recall  $K(\cdot )$
is a kernel,  $h$ a bandwidth and $K_{h}\left( \cdot \right) =K\left( \cdot /h\right) $.
Let $\widehat{v}_{n}^{2} (\cdot; \widehat a,\widehat b )$ be an estimate of the
variance of $nh^{1/2}Q_{n}(\cdot; \widehat a,\widehat b ) $ like in section \ref{est_var_sec}. Given  $B_p\subset \mathcal{S}^p$ with strictly positive Lebesgue measure in $\mathcal{S}^p$ that contains $\gamma_0^{(p)}$,  the least favorable direction $\gamma$ for $H_0$ is defined as
\begin{equation}  \label{bet_b}
\widehat{\gamma }_{n} = \arg \max_{\gamma\in B_p} \left[ n h
^{1/2} Q_{n}(\gamma; \widehat a,\widehat b )/\widehat{v}_{n}(\gamma; \widehat a,\widehat b ) - \alpha _{n} \mathbb{I}_{\left\{
\gamma \neq \gamma_0^{(p)}  \right\}} \right] \;.
\end{equation}
The test statistic  is then
\begin{equation}\label{test_stat_b}
T_{n} = n h^{1/2} \frac{Q_{n}(\widehat{\gamma}_{n}; \widehat a,\widehat b)}{
\widehat{ v}_{n} (\widehat{\gamma}_{n}; \widehat a,\widehat b)} \; .
\end{equation}
We will show that an asymptotic $\alpha$-level test is given by $\mathbb{I} \left(
T_{n}\geq z_{1-a} \right) $, where $z_{a}$ is the
$(1-a)-$quantile of the standard normal distribution.

To derive  the standard normal behavior of the test statistic under the null, we will show that under suitable conditions
\begin{equation}\label{eq_test_lin}
\sup_{\gamma\in \mathcal{S}^p} nh^{1/2}| Q_{n}(\gamma; \widehat a,\widehat b ) - Q_{n}(\gamma )|=o_{\mathbb{P}}(1) \;\;\; \text{and } \;\;\; \sup_{\gamma\in \mathcal{S}^p} | \widehat{ v}_{n} (\gamma; \widehat a,\widehat b)/\widehat{ v}_{n} (\gamma) - 1| =o_{\mathbb{P}}(1),
\end{equation}
with $Q_{n}(\gamma )$ and $\widehat{ v}_{n} (\gamma)$ defined like in section \ref{sec3}, that is we will bring the problem back to the case where the errors $U_i$ are observed.


\begin{lem}\label{diff_fund}
Assume the conditions of Theorem \ref{as_law} are met, $\mathbb{E}(|U|^m)<\infty$ for any $m\geq 1,$ and  $\int_0^1 \mathbb{E}[X^2(t)] dt<\infty.$ Let $\widehat b \in L^2[0,1]$ be an estimator of $b$ such that $\|\widehat b - b \|_{L^2} = O_{\mathbb{P}} (n^{-\rho})$ for some $3/8<\rho\leq 1/2$.  Moreover, suppose that the bandwidth $h$ is such that $n^{1-2\zeta}h^{1/2} \rightarrow 0$ for some $3/8 <\zeta < \rho.$
Then, under  the hypothesis $H_0$ the uniform rates of convergence in (\ref{eq_test_lin}) holds true.
\end{lem}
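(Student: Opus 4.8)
The plan is to expand $\widehat U_i = U_i - (\widehat a - a) - \langle \widehat b - b, X_i\rangle$ and substitute into $Q_n(\gamma;\widehat a,\widehat b)$, thereby writing $Q_n(\gamma;\widehat a,\widehat b) - Q_n(\gamma)$ as a sum of $U$-statistic-type remainder terms, each involving at least one factor of $\widehat a - a$ or $\langle\widehat b-b,X_i\rangle$. Writing $\Delta_i = \langle\widehat b - b, X_i\rangle$ and noting from the displayed formula for $\widehat a$ that $\widehat a - a = -\int_0^1\{\widehat b(t)-b(t)\}\overline X_n(t)\,dt + \overline U_n$, the product $\widehat U_i\widehat U_j$ differs from $U_iU_j$ by cross terms of the form $U_i\Delta_j$, $U_i(\widehat a - a)$, $\Delta_i\Delta_j$, $\Delta_i(\widehat a-a)$, $(\widehat a-a)^2$, etc. First I would record the elementary bounds $|\Delta_i| \le \|\widehat b - b\|_{L^2}\,\|X_i\|_{L^2} = O_{\mathbb P}(n^{-\rho})\cdot\|X_i\|$ and $|\widehat a - a| = O_{\mathbb P}(n^{-\rho} + n^{-1/2}) = O_{\mathbb P}(n^{-\rho})$ (using $\rho \le 1/2$), together with $\frac1n\sum_i\|X_i\|^2 = O_{\mathbb P}(1)$ from $\int_0^1\mathbb E[X^2(t)]dt<\infty$, and the moment assumption $\mathbb E(|U|^m)<\infty$ for all $m$.

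Next I would handle each remainder term. For the "pure" terms not involving $U_i$, such as $\frac{1}{n(n-1)h}\sum_{i\ne j}\Delta_i\Delta_j K_h(\langle X_i-X_j,\gamma\rangle)$, one uses $|K_h|\le \|K\|_\infty$ and the bound on $\Delta_i$ to get a deterministic-looking bound $h^{-1}\cdot O_{\mathbb P}(n^{-2\rho})\cdot\frac1{n^2}\sum_{i,j}\|X_i\|\|X_j\| = O_{\mathbb P}(n^{-2\rho}h^{-1})$, uniformly in $\gamma$ since the bound does not depend on $\gamma$; multiplying by $nh^{1/2}$ gives $O_{\mathbb P}(n^{1-2\rho}h^{-1/2})$, which is $o_{\mathbb P}(1)$ under $n^{1-2\zeta}h^{1/2}\to 0$ with $\zeta<\rho$ (so $n^{1-2\rho}h^{-1/2} \le n^{1-2\zeta}h^{-1/2}\cdot n^{2\zeta - 2\rho}$; one checks the exponents work out once $h\to 0$ is taken into account, using $\zeta>3/8$ to control $h^{-1}$ against the polynomial gain). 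The delicate terms are the mixed ones linear in the observed errors, namely $\frac{nh^{1/2}}{n(n-1)h}\sum_{i\ne j}U_i\Delta_j K_h(\langle X_i-X_j,\gamma\rangle)$ and its symmetric partner. Here a crude $\|K\|_\infty$ bound is not enough, because $\sum_i|U_i|$ is only $O_{\mathbb P}(n)$; instead I would exploit that $K_h(\langle X_i-X_j,\gamma\rangle)$ is "localized", so that $h^{-1}\sum_j K_h(\cdots)$ has expectation $O(1)$ uniformly in $\gamma\in B_p$ by Assumption \ref{ass_dc}-(c)(i) (the key uniform bound on $\int f_\gamma^2$, exactly as used in Lemma \ref{leem1}), combined with a Cauchy–Schwarz / conditioning argument: bound the term by $\|\widehat b - b\|_{L^2}$ times $\sup_\gamma \big\{\frac1{n h^{1/2}}\sum_{i\ne j}U_i\|X_j\|\,h^{-1}K_h(\langle X_i-X_j,\gamma\rangle)\big\}$ (with appropriate normalization), and then control this $U$-statistic-indexed-by-$\gamma$ object by the same empirical-process machinery (covering numbers of the kernel class in $\gamma$, bounded variation of $K$) that underlies Lemma \ref{leem1}. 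The second assertion in (\ref{eq_test_lin}), about $\widehat v_n$, is then proved the same way: $\widehat v_n^2$ is built from $\widehat U_i^2\widehat U_j^2 K_h^2$ (for (\ref{var_est1a})/(\ref{var_est_tau})) or from nonparametric residual-variance estimates (for (\ref{var_est1b})); expanding $\widehat U_i^2 = U_i^2 - 2U_i(\widehat a - a + \Delta_i) + (\cdots)^2$ produces analogous remainder terms, and since $\widehat v_n^2$ is bounded away from zero with probability tending to one by Lemma \ref{leem1}, the ratio bound $\sup_\gamma|\widehat v_n(\gamma;\widehat a,\widehat b)/\widehat v_n(\gamma) - 1| = o_{\mathbb P}(1)$ follows from the additive control on $\widehat v_n^2(\gamma;\widehat a,\widehat b) - \widehat v_n^2(\gamma)$.

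The main obstacle, as indicated above, is the mixed term $nh^{1/2}\cdot\frac{1}{n(n-1)h}\sum_{i\ne j}U_i\Delta_j K_h(\langle X_i - X_j,\gamma\rangle)$: one must gain enough from the localization of the kernel to beat the $h^{-1/2}$ blow-up while only "paying" $n^{-\rho}$ from $\|\widehat b - b\|_{L^2}$, and crucially this must be done \emph{uniformly} in $\gamma\in\mathcal S^p$ with $p$ growing, which is why the lower bound $\rho>3/8$ and the matching bandwidth restriction $n^{1-2\zeta}h^{1/2}\to 0$ with $3/8<\zeta<\rho$ appear — they are exactly what is needed to absorb the extra $h^{-1}$ (squared kernel in the variance) and the $\ln n$ and $p$ factors coming from the uniformity. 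I expect the bookkeeping to parallel the proof of Lemma \ref{leem1} closely, with the degenerate-$U$-statistic decomposition and the Fourier/Plancherel bound on $\mathbb E[h^{-1}K_h(\langle X_1-X_2,\gamma\rangle)]$ doing the heavy lifting; the new ingredient is simply tracking the extra $\|X_i\|$ factors, which are square-integrable by hypothesis.
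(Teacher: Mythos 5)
Your decomposition of $Q_n(\gamma;\widehat a,\widehat b)-Q_n(\gamma)$ into cross terms is the same as the paper's, and you correctly single out the mixed term $\frac{1}{n(n-1)h}\sum_{i\ne j}U_i\langle\widehat b-b,X_j\rangle K_h(\langle X_i-X_j,\gamma\rangle)$ as the crux; but the two concrete bounds you propose both fail. For the quadratic term $\frac{1}{n(n-1)h}\sum_{i\ne j}\Delta_i\Delta_j K_h(\cdots)$, the crude bound $|K_h|\le \|K\|_\infty$ gives $O_{\mathbb{P}}(n^{-2\rho}h^{-1})$, hence $nh^{1/2}\cdot O_{\mathbb{P}}(n^{-2\rho}h^{-1})=O_{\mathbb{P}}(n^{1-2\rho}h^{-1/2})$. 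The hypotheses only yield $h^{-1/2}=o(n^{1/4})$ (from $nh^2\to\infty$ in Assumption \ref{K}-(b)), and $1-2\rho+1/4>0$ for every $\rho\le 1/2$, so this does not vanish; note also that the bandwidth hypothesis controls $n^{1-2\zeta}h^{+1/2}$, not $n^{1-2\zeta}h^{-1/2}$, so your exponent comparison has the wrong sign on $h$. What is needed is the localized $U$-statistic bound of Lemma \ref{tech_u_stat} (via the Fourier/Plancherel control of $\mathbb{E}[h^{-1}K_h(\langle X_1-X_2,\gamma\rangle)]$ under Assumption \ref{ass_dc}-(c)(i)), which gives $\sup_{\gamma}\frac{1}{n(n-1)h}\sum_{i\ne j}\|X_i-\overline{X}_n\|\,\|X_j-\overline{X}_n\|K_h(\cdots)=O_{\mathbb{P}}(1)$ and hence the rate $O_{\mathbb{P}}(n^{1-2\rho}h^{+1/2})\le O_{\mathbb{P}}(n^{1-2\zeta}h^{1/2})=o_{\mathbb{P}}(1)$; this is exactly why the bandwidth condition is stated with $h^{+1/2}$.

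The mixed term is a more serious gap. Replacing $\langle\widehat b-b,X_j\rangle$ by $\|\widehat b-b\|_{L^2}\|X_j\|$ inside the signed double sum is not a valid inequality: the coefficient of $X_j$, namely $\sum_{i\ne j}U_iK_h(\cdots)$, changes sign with $j$, so the only legitimate versions take absolute values over $j$, i.e. bound by $\|\widehat b-b\|_{L^2}\sum_j\|X_j\|\,\bigl|\sum_{i\ne j}U_iK_h(\cdots)\bigr|$. That preserves the $U_i$-cancellation (Lemma \ref{tech_emp_proc} gives $\sup_{\gamma,t}|n^{-1}\sum_iU_iK_h(\langle X_i,\gamma\rangle-t)|=O_{\mathbb{P}}(p^{1/2}n^{-1/2+\epsilon}h^{1/2})$) but loses all decay from the $j$-sum, and after multiplying by $nh^{1/2}$ one is left with $O_{\mathbb{P}}(p^{1/2}n^{1/2-\rho+\epsilon})$, which diverges for every $\rho\le 1/2$. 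The missing idea is that the $j$-index must also contribute a factor $n^{-1/2}$: the term equals $\langle\widehat b-b,W_n(\gamma)\rangle$ with $W_n(\gamma)=\frac{1}{n(n-1)h}\sum_{i\ne j}U_i(X_j-\overline{X}_n)K_h(\cdots)$ an $L^2[0,1]$-valued statistic satisfying $\|W_n(\gamma)\|_{L^2}=O_{\mathbb{P}}(n^{-1/2})$ up to uniformity losses. The paper gets this by splitting $X_j-\overline{X}_n=(X_j-\mathbb{E}X)-(\overline{X}_n-\mathbb{E}X)$ and exploiting $\|\overline{X}_n-\mathbb{E}X\|_{L^2}=O_{\mathbb{P}}(n^{-1/2})$ together with the uniform empirical-process bound of Lemma \ref{tech_emp_proc}; an equivalent route is a second-moment computation for the Hilbert-space-valued $U$-statistic $W_n(\gamma)$. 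Without that extra $n^{-1/2}$ the restriction $\rho>3/8$ cannot do its job, so this is not a bookkeeping detail but a step that would fail as written.
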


At this stage it is worthwhile to notice an important difference between the functional data framework and the finite-dimension case. In the later  case the parameters of a parametric regression model could be easily estimated at the usual rate $O_{\mathbb{P}}(n^{-1/2})$ which makes that the equivalences (\ref{eq_test_lin}) to hold without any further conditions on the model. In the functional covariate and functional parameter case, the rate of $\|\widehat b - b\|$ depends on the regularities of the covariate and of the slope parameter and is in general less than $O_{\mathbb{P}}(n^{-1/2})$, see Hall and Horowitz (2007), Crambes, Kneip and Sarda (2009).
To make the differences $\widehat U_i - U_i $ sufficiently small and hence to preserve the standard normal critical values for $T_{n}$ defined in (\ref{test_stat_b}) one has to pay a price on the bandwidth $h$: slower rates of $\|\widehat b - b\|_{L^2}$ will require faster decreases for $h$, and this will result in a loss of power against sequences of local alternatives. Below, we will investigate these aspects in more detail in the case  where the slope is estimated using the functional principal component approach.

\begin{theor}
\label{as_law2}  Under the conditions of Lemma \ref{diff_fund} and if the hypothesis $H_0$ holds true, the law of the test statistic $
T_{n} $ is asymptotically standard normal. Consequently the test given by $\mathbb{I}(T_n \geq z_{1-a})$, where $z_a$ is the $(1-a)-$quantile of the standard normal distribution,  has asymptotic level  $a.$
\end{theor}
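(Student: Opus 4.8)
The plan is to reduce the problem with estimated residuals $\widehat U_i$ to the already-settled case of observed errors treated in Theorem \ref{as_law}. By Lemma \ref{diff_fund}, under the stated conditions on $\rho$, $\zeta$ and the bandwidth $h$, we have the two uniform approximations in (\ref{eq_test_lin}): $\sup_{\gamma\in\mathcal{S}^p} nh^{1/2}|Q_n(\gamma;\widehat a,\widehat b)-Q_n(\gamma)| = o_{\mathbb{P}}(1)$ and $\sup_{\gamma\in\mathcal{S}^p}|\widehat v_n(\gamma;\widehat a,\widehat b)/\widehat v_n(\gamma)-1| = o_{\mathbb{P}}(1)$. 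Hence, up to a uniformly negligible term, the penalized criterion in (\ref{bet_b}) defining $\widehat\gamma_n$ coincides with the penalized criterion in (\ref{bet}) built from the true errors. Since the conditions of Lemma \ref{beta} are in force (they are part of the hypotheses of Theorem \ref{as_law}, hence of Lemma \ref{diff_fund}), that lemma gives $\mathbb{P}(\widehat\gamma_n^{\,\mathrm{obs}}=\gamma_0^{(p)})\to 1$ under $H_0$ for the estimated-error maximizer as well, because the $o_{\mathbb{P}}(1)$ perturbation of the objective function is swept away by the penalty term $\alpha_n\mathbb{I}_{\{\gamma\neq\gamma_0^{(p)}\}}$ whose size $\alpha_n$ diverges; more precisely, on the event where the true-error criterion is maximized at $\gamma_0^{(p)}$, the estimated-error criterion differs from it by $o_{\mathbb{P}}(1)$ uniformly, so its maximizer is also $\gamma_0^{(p)}$ with probability tending to one.

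With $\widehat\gamma_n=\gamma_0^{(p)}$ on an event of probability tending to $1$, the test statistic (\ref{test_stat_b}) reduces to
$$
T_n = nh^{1/2}\frac{Q_n(\gamma_0^{(p)};\widehat a,\widehat b)}{\widehat v_n(\gamma_0^{(p)};\widehat a,\widehat b)} + o_{\mathbb{P}}(1).
$$
Using again (\ref{eq_test_lin}) with the fixed direction $\gamma=\gamma_0^{(p)}$, both $nh^{1/2}Q_n(\gamma_0^{(p)};\widehat a,\widehat b) = nh^{1/2}Q_n(\gamma_0^{(p)}) + o_{\mathbb{P}}(1)$ and $\widehat v_n(\gamma_0^{(p)};\widehat a,\widehat b) = \widehat v_n(\gamma_0^{(p)})(1+o_{\mathbb{P}}(1))$. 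Since $1/\widehat v_n(\gamma_0^{(p)}) = O_{\mathbb{P}}(1)$ by Lemma \ref{leem1}, a Slutsky-type argument yields $T_n = nh^{1/2}Q_n(\gamma_0^{(p)})/\widehat v_n(\gamma_0^{(p)}) + o_{\mathbb{P}}(1)$, which is exactly the quantity shown in the proof of Theorem \ref{as_law} to converge in law to $N(0,1)$ via the central limit theorem for $U$-statistics of Guerre and Lavergne (2005, Lemma 2). The conclusion about the asymptotic level $a$ of the test $\mathbb{I}(T_n\geq z_{1-a})$ then follows from the continuous mapping theorem applied to the standard normal limit.

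The main obstacle is not in the above assembly, which is routine once Lemma \ref{diff_fund} is granted, but rather in checking that the perturbation bounds in (\ref{eq_test_lin}) genuinely propagate through the $\arg\max$ step uniformly in $p$: one must verify that the stochastic bound $o_{\mathbb{P}}(1)$ in $\sup_{\gamma\in\mathcal{S}^p} nh^{1/2}|Q_n(\gamma;\widehat a,\widehat b)-Q_n(\gamma)|$ holds with the \emph{same} sequence $p=p_n$ that grows with $n$, so that the event $\{\widehat\gamma_n=\gamma_0^{(p)}\}$ still has probability tending to one; this is where the constraint $3/8<\rho\leq 1/2$ together with the bandwidth condition $n^{1-2\zeta}h^{1/2}\to 0$ is consumed, and it explains why slower rates $\|\widehat b-b\|_{L^2}=O_{\mathbb{P}}(n^{-\rho})$ force faster bandwidths and consequently a loss of power. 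One should also take minimal care that the variance-ratio control in (\ref{eq_test_lin}) combined with $1/\widehat v_n(\gamma_0^{(p)})=O_{\mathbb{P}}(1)$ keeps the denominator of $T_n$ bounded away from zero after plugging in residuals; this is immediate from $Var(U\mid\langle X,\gamma_0^{(p)}\rangle)\geq\underline\sigma^2>0$ and Lemma \ref{leem1}.
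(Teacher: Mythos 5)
Your proposal is correct and follows exactly the route the paper intends: the paper omits the proof of Theorem \ref{as_law2}, stating only that it is a direct consequence of Lemma \ref{diff_fund} combined with the arguments used for Theorem \ref{as_law}, which is precisely the reduction you carry out (uniform approximation (\ref{eq_test_lin}) to pass from residuals to true errors, propagation through the penalized $\arg\max$ so that $\mathbb{P}(\widehat\gamma_n=\gamma_0^{(p)})\to 1$ still holds, then Slutsky and the $U$-statistic CLT at $\gamma_0^{(p)}$). Your added care about the perturbation surviving the $\arg\max$ step uniformly in the growing $p$ and about the denominator staying bounded away from zero is exactly the detail the paper leaves implicit.
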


The proof of this theorem is a direct consequence of Lemma \ref{diff_fund} and the arguments we used for Theorem \ref{as_law}, therefore we will omit it.

There are several possibilities to estimate the parameters of a functional linear model.
Let us investigate our test in the case where the estimate $\widehat b$ is obtained using the functional principal component analysis (PCA) approach, which is a standard approach for estimating the slope $b$; see for instance Ramsay and Silverman (2005) and Hall and Horowitz (2007). For the sake of completeness, let us briefly recall this estimation procedure.
Let  $\mathcal{K}(u,v)=\mbox{\rm Cov}(X(u),X(v))$, $\overline{X}_n=n^{-1}\sum_{i=1}^n  X_i$ and
$$ \widehat{\mathcal{K}}(u,v)=\sum_{i=1}^n (X_i(u)-\overline{X}_n(u))(X_i(v)-\overline{X}_n(v)).$$
Write the spectral expansions of $\mathcal{K}$ and $\widehat{\mathcal{K}}$ as
$$\mathcal{K}(u,v)=\sum_{j=1}^{\infty} \theta_j \phi_j(u) \phi_j(v), \qquad  \widehat{\mathcal{K}}(u,v)=\sum_{j=1}^{\infty} \widehat{\theta}_j \widehat{\phi}_j(u) \widehat{\phi}_j(v),$$
where
$$\theta_1 > \theta_2 > \cdots > 0, \qquad \widehat{\theta}_1 \geq \widehat{\theta}_2 \geq \cdots \geq 0$$
are the eigenvalues sequences of the operators with kernel $\mathcal{K}$ and $\widehat{\mathcal{K}}$, respectively, and $\phi_1, \phi_2, \ldots$ and $\widehat{\phi}_1, \widehat{\phi}_2, \ldots$ are the respective orthonormal eigenfunctions sequences. The linear operator corresponding to $\mathcal{K}$ is defined by $(\mathcal{K}f)(u)=\int \mathcal{K}(u,v) f(v) \textrm{d}v$. We have
$\mathcal{K}b=g$
where $g(u)=\esp [(Y-\esp(Y))(X(u)-\esp (X(u)) ]$. This suggests the estimator
\begin{equation}\label{est_pca_b}
\widehat{b}(t)=\sum_{j=1}^m \widehat{b}_j \widehat{\phi}_j(t), \quad t\in [0,1],
\end{equation}
where the truncation point $m$ is a smoothing parameter, $\widehat{b}_j=\widehat{\theta}^{-1}_j \widehat{g}_j$, $\widehat{g}_j=\langle \widehat{g}, \widehat{\phi}_j \rangle$ and
\begin{equation}\label{g_funct_i}
\widehat{g}(t)=n^{-1} \sum_{i=1}^n (Y_i-\overline{Y}_n)(X_i(t)-\overline{X}_n(t))
\end{equation}
with $\overline{Y}_n=n^{-1}\sum_{i=1}^n  Y_i$.

\setcounter{hp}{15}

For simplicity, hereafter the orthonormal basis $\mathcal{R} $ introduced in section
\ref{section1} is the basis composed of the sequence of orthonormal eigenfunctions
$\phi_1, \phi_2, \ldots$ of the covariance operator $\mathcal{K}$. Hence
$
X(t) = \sum_{j=1}^\infty x_j \phi_j(t),
$
where the random coefficients  $x_j = \langle  X, \phi_j \rangle $.
The following assumptions are standard conditions on the covariance operator $\mathcal{K}$ and  the slope parameter in the linear model, as could be found in Hall and Horowitz (2007).

\begin{hp}
\label{order}
\hspace{-0.1cm}
\begin{enumerate}
\item[(a)] The covariate $X$ has finite fourth moment, that is $\int_0^1 \mathbb{E}[X^4(t) ]dt <\infty$; moreover for some constant $C>1$, $\mathbb{E}[x_j - \mathbb{E}(x_j)]^4 \leq C \theta_j^2$ for all $j.$

\item[(b)] The errors $U_i$ are identically distributed, independent of $X_i$, with zero mean and finite variance.

\item[(c)] The eigenvalues $\theta_j$ of the covariance operator $\mathcal{K}$ satisfy $$ \theta_j-\theta_{j+1} \geq C^{-1}j^{-\alpha-1} \qquad \forall j \geq 1.$$

\item[(d)] The Fourier coefficients $b_j$ satisfy  $$|b_j| \leq Cj^{-\beta}   $$ and $\alpha >1$, $\frac{3}{2}\alpha + 2 < \beta$.
\end{enumerate}
\end{hp}

The condition $\frac{3}{2}\alpha + 2 < \beta$ replaces condition $\frac{1}{2}\alpha + 1 < \beta$ of Hall and Horowitz (2007) in order to conciliate the various requirements on the bandwidth $h$. For comparison,  see also Theorem 4.1 of Cai and Hall (2006) where is required $\beta\geq \alpha +2 .$
Hall and Horowitz (2007) show if Assumption \ref{order} hold true and if $m \asymp n^{1 / ( \alpha + 2\beta )}$, then
$$
\|\widehat b - b \|^2_{L^2} = O_{\mathbb{P}}\left( n^{- \, \frac{2\beta - 1}{\alpha+ 2 \beta}} \right),
$$
and this rate is optimal in a minimax sense. In this case $\rho =(2\beta - 1)/\{ 2(\alpha+ 2 \beta)\}$ and  the condition $\rho >3/8$ of Theorem \ref{as_law2}, which guarantees a non empty range for the bandwidth, becomes $\frac{3}{2}\alpha + 2 < \beta$, that is Assumption \ref{order}-(d).

\subsection{The behavior under the alternatives}

The alternatives of the functional linear model we consider are of the form
\begin{equation}
H_{1n}:\; Y_{in} = a+\langle b, X_i \rangle  + r_n \delta(X_i) + U^0_i ,\quad  n\geq 1, \quad\text{with} \;\; \mathbb{E}(U^0_i\mid X_i) = 0, \quad 1 \leq i \leq n,
\label{hh1bis}
\end{equation}
with $\delta(\cdot)$  an real-valued function such that $0< \mathbb{E}[\delta^4 (X)] <\infty$ and $r_n$, $n\geq 1$  a sequence of real numbers.

To be able to investigate the behavior of the test statistic under the alternatives, first we have
to analyze the behavior of $\widehat b$ the estimator of $b$.  To keep the paper at a reasonable length, hereafter we consider that  $\widehat b$ is estimator obtained through that functional PCA approach.
In Lemma \ref{leem_order_3} below we derive the rate of convergence of $\widehat b$ towards $b$ under the alternatives $H_{1n}$, provided that the function $\delta(\cdot)$ satisfies the orthogonality conditions
\begin{equation}\label{orth_cond_b}
\mathbb{E}[\delta(X)]=0\qquad \text{ and } \qquad \mathbb{E}[\delta(X)X]=0.
\end{equation}
Such orthogonality conditions are quite common in nonparametric testing, see for instance equation (3.11) in Guerre and Lavergne (2005), and they  allow to focus on the performance of the test to detect departures from the model.

\begin{lem}
\label{leem_order_3}
Assume that $X_{1}
,\ldots ,X_{n}$ are independent draws from  $X$, $\int_0^1 \mathbb{E}[X^2(t)] dt<\infty$ and condition (\ref{orth_cond_b}) hold true.
Let $\widehat b$ (resp. $\widehat{b}^0$) be the estimator defined in (\ref{est_pca_b}) obtained from data generated according to the model (\ref{hh1bis}) with a bounded sequence $r_n\geq 0$, $n\geq 1$ (resp. with $r_n=0$ for all $n\geq 1$).  Then
$$
\|\widehat{b}^0-\widehat{b}\|^2_{L^2} =  O_{\mathbb{P}}(r_n^2 n^{-1}) \sum_{j=1}^m \widehat{\theta}^{-2}_j.
$$
If in addition assumption \ref{order} hold true and $m \asymp n^{1 / ( \alpha + 2\beta )}$, then
$$\int_0^1 \{\widehat{b}(t)-b(t)\}^2 dt =O_{\mathbb{P}}\left(n^{-\frac{2\beta-1}{\alpha+2\beta}}\right)+ o_{\mathbb{P}}(r_n^2).$$
\end{lem}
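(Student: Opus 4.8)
The plan is to track precisely how the extra drift term $r_n\delta(X_i)$ propagates through the PCA-based estimator. Write $\widehat g(t)$ for the empirical cross-covariance in \eqref{g_funct_i} computed from data generated by \eqref{hh1bis}, and $\widehat g^0(t)$ for the analogous quantity with $r_n=0$. Since $Y_{in} = Y_i^0 + r_n\delta(X_i)$ with $Y_i^0 = a+\langle b,X_i\rangle + U_i^0$, the two cross-covariances differ only by the term coming from $r_n\delta(X_i)$, namely $\widehat g(t)-\widehat g^0(t) = r_n\, n^{-1}\sum_{i=1}^n (\delta(X_i)-\overline{\delta(X)}_n)(X_i(t)-\overline{X}_n(t))$, which by the orthogonality condition \eqref{orth_cond_b} ($\mathbb{E}[\delta(X)X]=0$ and $\mathbb{E}[\delta(X)]=0$) is a centered average; a routine second-moment computation using $\mathbb{E}[\delta^4(X)]<\infty$ and $\int_0^1\mathbb{E}[X^2(t)]dt<\infty$ gives $\|\widehat g-\widehat g^0\|_{L^2}^2 = O_{\mathbb{P}}(r_n^2 n^{-1})$. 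Importantly, the empirical covariance operator $\widehat{\mathcal{K}}$, and hence the eigenvalues $\widehat\theta_j$ and eigenfunctions $\widehat\phi_j$, depend only on the $X_i$'s and are therefore identical for the two data sets.

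Next I would propagate this into $\widehat b - \widehat b^0$. From \eqref{est_pca_b}, $\widehat b(t)-\widehat b^0(t) = \sum_{j=1}^m \widehat\theta_j^{-1}\langle \widehat g - \widehat g^0,\widehat\phi_j\rangle\,\widehat\phi_j(t)$, so by orthonormality of the $\widehat\phi_j$,
$$
\|\widehat b-\widehat b^0\|_{L^2}^2 = \sum_{j=1}^m \widehat\theta_j^{-2}\,\langle \widehat g-\widehat g^0,\widehat\phi_j\rangle^2 \leq \Big(\sum_{j=1}^m \widehat\theta_j^{-2}\Big)\,\|\widehat g - \widehat g^0\|_{L^2}^2,
$$
which combined with the bound on $\|\widehat g-\widehat g^0\|_{L^2}^2$ yields the first claim $\|\widehat b^0-\widehat b\|_{L^2}^2 = O_{\mathbb{P}}(r_n^2 n^{-1})\sum_{j=1}^m\widehat\theta_j^{-2}$. (One should be slightly careful if some $\widehat\theta_j=0$, but under Assumption \ref{order}-(c) and $m\asymp n^{1/(\alpha+2\beta)}$ the relevant $\widehat\theta_j$ are bounded below with probability tending to one by the standard eigenvalue perturbation estimates of Hall and Horowitz (2007).)

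For the second claim I would invoke the triangle inequality $\|\widehat b - b\|_{L^2} \leq \|\widehat b^0 - b\|_{L^2} + \|\widehat b - \widehat b^0\|_{L^2}$. The first term is exactly the Hall–Horowitz (2007) rate: under Assumption \ref{order} and $m\asymp n^{1/(\alpha+2\beta)}$, $\|\widehat b^0 - b\|_{L^2}^2 = O_{\mathbb{P}}(n^{-(2\beta-1)/(\alpha+2\beta)})$. For the second term, I need to control $\sum_{j=1}^m\widehat\theta_j^{-2}$. Using $\widehat\theta_j^{-1} = \theta_j^{-1}(1+o_{\mathbb{P}}(1))$ uniformly over $j\leq m$ (again from Hall–Horowitz's perturbation bounds, valid since the spacing condition \ref{order}-(c) forces $\theta_j \gtrsim j^{-\alpha}$ and $m$ grows slowly enough), and $\theta_j^{-2}\lesssim j^{2\alpha}$, one gets $\sum_{j=1}^m\widehat\theta_j^{-2} = O_{\mathbb{P}}(m^{2\alpha+1}) = O_{\mathbb{P}}(n^{(2\alpha+1)/(\alpha+2\beta)})$. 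Hence $\|\widehat b - \widehat b^0\|_{L^2}^2 = O_{\mathbb{P}}(r_n^2\, n^{-1 + (2\alpha+1)/(\alpha+2\beta)})$; since Assumption \ref{order}-(d) gives $\beta > \tfrac32\alpha+2$, one checks $-1+(2\alpha+1)/(\alpha+2\beta) < 0$, so this is $o_{\mathbb{P}}(r_n^2)$, giving $\int_0^1\{\widehat b(t)-b(t)\}^2 dt = O_{\mathbb{P}}(n^{-(2\beta-1)/(\alpha+2\beta)}) + o_{\mathbb{P}}(r_n^2)$.

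The main obstacle is the quantitative control of $\sum_{j=1}^m \widehat\theta_j^{-2}$ and, more delicately, the replacement of $\widehat\theta_j$ by $\theta_j$ uniformly in $j\leq m$: this requires the eigenvalue/eigenfunction perturbation machinery for the empirical covariance operator (Bessel-type inequalities, $\|\widehat{\mathcal{K}}-\mathcal{K}\|$ bounds in Hilbert–Schmidt norm, and the spacing condition \ref{order}-(c) to separate consecutive eigenvalues), exactly as in Hall and Horowitz (2007). Everything else is a second-moment computation exploiting the orthogonality condition \eqref{orth_cond_b}, which is precisely what makes the cross term centered and keeps the perturbation of order $r_n^2 n^{-1}$ rather than $r_n^2$.
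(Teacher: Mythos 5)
Your proposal is correct and follows essentially the same route as the paper: you isolate $\widehat g-\widehat g^0$, note that $\widehat{\mathcal K}$ (hence $\widehat\theta_j,\widehat\phi_j$) is unchanged, apply Cauchy--Schwarz to get $\|\widehat b-\widehat b^0\|^2\le\bigl(\sum_{j\le m}\widehat\theta_j^{-2}\bigr)\|\widehat g-\widehat g^0\|^2$, use the orthogonality conditions to make the second-moment computation give $O_{\mathbb P}(r_n^2 n^{-1})$, and then combine the Hall--Horowitz rate for $\widehat b^0$ with the bound $\sum_{j\le m}\widehat\theta_j^{-2}=o_{\mathbb P}(n)$ (via eigenvalue perturbation) to conclude. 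The only cosmetic difference is that the paper controls $\sum_{j\le m}(\theta_j^{-2}-\widehat\theta_j^{-2})=o_{\mathbb P}(n)$ directly from Hall--Horowitz's equations rather than asserting a uniform relative approximation of the eigenvalues, but both arguments rest on the same perturbation estimates.
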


\bigskip

Let us note that no moment condition for $U^0$ is needed for the proof of the first part of Lemma \ref{leem_order_3}. Moreover, let us point out that we will not need to investigate the convergence rate for the estimator of $a$ under the alternatives since by construction $$\widehat{a}-a=-\int_0^1\{\widehat{b}(t)-b(t)\}\overline{X}_n(t) dt+\overline{U}_n.$$
Now, we can analyze the behavior of the test statistics under the alternatives (\ref{hh1bis}). The estimated residuals $\widehat{U}_i$ can be decomposed
\begin{equation}\label{eq_alty}
\widehat{U}_i = U^0_i + r_n \delta(X_i) - \langle \widehat{b}-b,X_i-\overline{X}_n\rangle - r_n \overline{\delta(X)}_n - \overline{U^0}_n
\end{equation}

\begin{theor}
\label{altern_b}
Consider the sequence of alternative hypotheses (\ref{hh1bis}) with a nonzero function $\delta$ satisfying (\ref{orth_cond_b}) and $0<\mathbb{E}[\delta^4(X)]<\infty.$ Let $\widehat b \in L^2[0,1]$ be an estimator of the slope parameter $b.$ Suppose that the conditions of Theorem \ref{as_law2} are met with $U$ replaced by $U^0$. Moreover, assume that $\widehat b$, the sequence
$r_n $, $n\geq 1$, the sequence $\alpha_n $, $n\geq 1$ and the bandwidth $h$ satisfy the additional conditions:
\begin{enumerate}
\item[(i)] $r_n^2 n h^{1/2}  / \alpha_n \rightarrow \infty$;

\item[(ii)] $r_n^{-1} \| \widehat b - b  \|_{L^2} = o_{\mathbb{P}}(1)$;

\item[(iii)] $\alpha _{n}/\{ p^{3/2} \ln n \} \rightarrow
\infty.$
\end{enumerate}
Then the test based on $T_n$ defined in (\ref{test_stat_b}) will reject the functional linear regression model with probability tending to 1, provided there exists $ p\geq 1$ and  $\widetilde \gamma\in B_p$ such that at least one of conditions (1) to (3) of Theorem \ref{altern} holds true.
\end{theor}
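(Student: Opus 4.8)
The plan is to bring the analysis back to the observed-error setting of Theorem~\ref{altern}, the price being an analogue of Lemma~\ref{diff_fund} valid under the local alternatives $H_{1n}$. Write $V_i=Y_{in}-a-\langle b,X_i\rangle=U^0_i+r_n\delta(X_i)$ for the \emph{oracle} residuals of the misspecified linear model, and let $Q_n(\gamma;a,b)$ and $\widehat v_n(\gamma;a,b)$ denote the quantities of Section~\ref{sec3} built from the $V_i$'s in place of the $U_i$'s. Since $\mathbb{E}(U^0_i\mid X_i)=0$, $0<\mathbb{E}[\delta^4(X)]<\infty$ and $r_n$ is bounded, the pairs $(V_i,X_i)$ satisfy Assumption~\ref{D} and play the role of $(U_i,X_i)$ in Theorem~\ref{altern}. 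By the hypothesis of the theorem (and Lemma~\ref{lem1} applied to $Z=\delta(X)$) fix a \emph{finite} $p$ and a direction $\widetilde\gamma\in B_p$ with $\mathbb{P}(\mathbb{E}[\delta(X)\mid\langle X,\widetilde\gamma\rangle]=0)<1$ satisfying one of conditions (1)--(3) of Theorem~\ref{altern}. Because $\mathbb{E}(V_i\mid\langle X_i,\widetilde\gamma\rangle)=r_n\,\mathbb{E}(\delta(X_i)\mid\langle X_i,\widetilde\gamma\rangle)$, the arguments behind Theorem~\ref{altern} give, for this fixed $\widetilde\gamma$,
\begin{equation*}
Q_n(\widetilde\gamma;a,b)=r_n^2\bigl[\,Q^\delta(\widetilde\gamma)+o_{\mathbb{P}}(1)\,\bigr],\qquad Q^\delta(\widetilde\gamma):=\mathbb{E}\bigl[\mathbb{E}^2\!\bigl(\delta(X)\mid\langle X,\widetilde\gamma\rangle\bigr)f_{\widetilde\gamma}(\langle X,\widetilde\gamma\rangle)\bigr]\in(0,\infty),
\end{equation*}
the remainder absorbing the $U^0$-variance term $O_{\mathbb{P}}(n^{-1}h^{-1/2})=o_{\mathbb{P}}(r_n^2)$ (since $r_n^2nh^{1/2}\to\infty$ by (i)) and the $U^0$--$\delta$ cross terms; and, as in Lemma~\ref{leem1}, $\widehat v_n(\gamma_0^{(p)};a,b)$ is $O_{\mathbb{P}}(1)$ and bounded away from zero, using Assumption~\ref{ass_dc}(c)(iii), $f_{\gamma_0^{(p)}}\leq C_3$, together with the moment control on the $V_i$'s.

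The core step is to transfer these facts to the estimated residuals $\widehat U_i$, that is, to prove the $H_{1n}$-version of Lemma~\ref{diff_fund} at the two fixed directions $\widetilde\gamma$ and $\gamma_0^{(p)}$:
\begin{equation*}
nh^{1/2}\bigl|Q_n(\widetilde\gamma;\widehat a,\widehat b)-Q_n(\widetilde\gamma;a,b)\bigr|=o_{\mathbb{P}}\bigl(r_n^2nh^{1/2}\bigr)\quad\text{and}\quad\bigl|\widehat v_n(\gamma_0^{(p)};\widehat a,\widehat b)/\widehat v_n(\gamma_0^{(p)};a,b)-1\bigr|=o_{\mathbb{P}}(1).
\end{equation*}
One inserts the decomposition (\ref{eq_alty}), $\widehat U_i-V_i=-\langle\widehat b-b,X_i-\overline{X}_n\rangle-r_n\overline{\delta(X)}_n-\overline{U^0}_n$, into $\widehat U_i\widehat U_j=V_iV_j+(\widehat U_i-V_i)V_j+V_i(\widehat U_j-V_j)+(\widehat U_i-V_i)(\widehat U_j-V_j)$ and bounds the resulting sums. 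The pieces carrying the centered scalars $\overline{U^0}_n$ and $r_n\overline{\delta(X)}_n$ are $O_{\mathbb{P}}(n^{-1/2})$ and are controlled by Markov's inequality exactly as their counterparts in Lemma~\ref{diff_fund}. The dominant new contribution comes from $\langle\widehat b-b,X_i-\overline{X}_n\rangle$; here one combines $\|\widehat b-b\|_{L^2}=o_{\mathbb{P}}(r_n)$ (condition (ii), which via Lemma~\ref{leem_order_3} restricts how fast $r_n$ may decay), Cauchy--Schwarz in $L^2[0,1]$, the bound $\int_0^1\mathbb{E}[X^2(t)]\,dt<\infty$, the boundedness and bounded variation of $K$, and direct second-moment bounds exploiting $\mathbb{E}(U^0\mid X)=0$, to show this term is $o_{\mathbb{P}}(r_n^2)$ after multiplication by $nh^{1/2}$; it is exactly here that the bandwidth restriction carried over from Lemma~\ref{diff_fund}/Theorem~\ref{as_law2} (in particular $3/8<\rho$) and the fact $r_n^2\gg n^{-1}h^{-1/2}$ are used. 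The same expansion applied to $\widehat v_n^2$, together with $\|\widehat b-b\|_{L^2}\to0$, yields the second display.

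Combining the displays with the chain of inequalities in (\ref{eqaa}) (adapted to the estimated residuals, and valid for either variance estimator of Section~\ref{est_var_sec} on the event $\{Q_n(\widetilde\gamma;\widehat a,\widehat b)>0\}$, whose probability tends to one), we get, on an event of probability tending to one,
\begin{equation*}
T_n\ \geq\ \frac{nh^{1/2}Q_n(\widetilde\gamma;\widehat a,\widehat b)}{\widehat v_n(\widetilde\gamma;\widehat a,\widehat b)}-\alpha_n\ =\ \frac{r_n^2\bigl[Q^\delta(\widetilde\gamma)+o_{\mathbb{P}}(1)\bigr]nh^{1/2}}{O_{\mathbb{P}}(1)}-\alpha_n\ \geq\ c\,r_n^2nh^{1/2}-\alpha_n
\end{equation*}
for some constant $c>0$. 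By condition (i), $r_n^2nh^{1/2}/\alpha_n\to\infty$, so the right-hand side tends to $+\infty$ in probability and $\mathbb{P}(T_n\geq z_{1-a})\to1$; condition (iii) is used only to secure the standard normal null behaviour of Theorem~\ref{as_law2} and plays no further role here, $p$ being held fixed. The main obstacle is the core transfer step: one must re-run the estimates of Lemma~\ref{diff_fund} with $\widehat b-b$ converging at the \emph{alternative-dependent} rate of Lemma~\ref{leem_order_3} rather than at the $H_0$ rate, and track that every perturbation term stays of order strictly smaller than the signal $r_n^2nh^{1/2}$; the difficulty is aggravated because the non-centered parts of $\widehat U_i-V_i$ destroy part of the degenerate $U$-statistic structure available under $H_0$, so those cross terms have to be handled by direct moment computations rather than by the $U$-statistic central limit theorem of Guerre and Lavergne (2005).
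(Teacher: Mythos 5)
Your proposal is correct and follows essentially the same route as the paper: the paper's own (very terse) proof likewise expands $\widehat U_i\widehat U_j$ via the decomposition (\ref{eq_alty}) into the fifteen cross-product terms, handles the $V_iV_j$-block by the arguments of Theorem \ref{altern} and the estimation-induced perturbations by the arguments of Lemma \ref{diff_fund}, and concludes $Q_n(\widetilde\gamma;\widehat a,\widehat b)\asymp_{\mathbb{P}}r_n^2$ before invoking (\ref{eqaa}) and condition (i). Your write-up in fact supplies more of the bookkeeping (the role of condition (ii) and the loss of degeneracy in the cross terms) than the published proof, which omits these details.
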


If Assumption \ref{order} holds true,  condition (ii) of Theorem \ref{altern_b} indicates that the test could detect only local alternatives $H_{1n}$ that approach the null hypothesis slower than $n^{-(2\beta-1)/\{2(\alpha+2\beta)\}}. $ Meanwhile, in order to detect the fastest possible alternatives, the bandwidth should decrease to zero as slow as allowed by condition (i), that is faster than  $n^{-2(\alpha+1)/(\alpha+2\beta)} $ times a power of $\ln n$, provided the dimension $p$ and $\alpha_n$ increase as fast as a power of $\ln n$ such that  condition (iii) is met.

\section{Empirical analysis}\label{cesar_is_the_best}

\subsection{Bootstrap procedures}

To improve the critical values of the test statistic $T_n$ with small samples we consider a wild bootstrap procedure that can be applied in both cases we consider herein: the $U_i$'s are observed or the $U$'s are estimated by some $\widehat U_i$'s. A bootstrap sample is denoted by $U_i^b$ or $\widehat U_i^b$, $1\leq i\leq n$.
The wild bootstrap procedure we propose is inspired by Mammen (1993):  $U_i^b = Z_i U_i$ (resp. $\widehat U_i^b = Z_i \widehat U_i$), $1\leq i\leq n$, with $Z_i=V_i/\sqrt{2} +(V_i^2 - 1)/2$ and $V_i$ independent standard normal variables independent from the original observations.
A bootstrap test statistic is built
from a bootstrap sample as was the original test statistic. When this scheme is repeated many times, the
bootstrap critical value $z^\star_{1-\alpha, n}$ at level $a$ is the empirical $(1-a)-$th quantile of the bootstrapped test statistics.
This critical value is then compared to the initial test statistic.

\subsection{Simulation study}

An extensive simulation study was carried out. For reasons of brevity, only a summary of the main results and conclusions is given here. We will focus on the goodness-of-fit test of parametric models.

Let us start with the functional linear model, as considered in Section 4, given by
$$Y=a+\langle b,X\rangle +U$$
The function $X$ is drawn as a Brownian motion, $b\in L^2[0,1]$ and $a\in \mathbb{R}$ are parameters to be estimated, and $U=\delta(X)+U^0$, where $\delta(X)$ is the deviation from the null hypothesis, and $U^0$ is the error. For the parameters, $b(t)=1$ for all $t\in[0,1]$ and $a=0$ were taken as the true values.

A sample $(Y_1,X_1),\ldots(Y_n,X_n)$ of size $n=200$ will be drawn from this model, that is,
$$Y_i=a+\langle b,X_i\rangle + \delta(X_i)+U_i^0, \qquad\qquad1\leq i\leq n,$$
where $U_1^0,\ldots, U_n^0$ are independent standard normal variables, also independent of the $X_i.$

The first scenario will be the goodness-of-fit of the functional linear model versus a quadratic type deviation
$$\delta_Q(X)=c\left(\int_0^1\int_0^1 X(s) X(t)\,ds\,dt - 1/3\right)$$
where $c=0$ under the null hypothesis and $c=0.6$ under the alternative.
The second scenario will be the goodness-of-fit of the functional linear model versus a cubic deviation
\begin{equation} \label{alt_cubic}
\delta_c(X) = d\left(\int_0^1\int_0^1\int_0^1 X(s) X(t) X(z) \,ds\,dt\,dz - \int_0^1 X(t) dt\right)
\end{equation}
where $d=0$ under the null and $d=0.9$ under the alternative.
Note that the two functions $\delta_Q(\cdot)$ and $\delta_c(\cdot)$ satisfy the orthogonality conditions (\ref{orth_cond_b}).
In these two scenarios, the PCA estimator of the functional linear model, studied in Hall and Horowitz (2007), is used.

Let us recall that the Karhunen-Lo\`eve expansion of the Brownian motion $X$, is given by
$$X(t)=\sum_{j=1}^{\infty} x_j\, \frac{1}{(j-0.5)\pi}\,\sqrt{2}\sin\left((j-0.5)\pi t\right)$$
where $x_j$ are independent standard normal coefficients, ${\cal R}=\{\rho_j(t)=\sqrt{2}\sin((j-0.5)\pi t): j\in\{1,2,\ldots\}\}$ constitutes an orthonormal basis of eigenfunctions, and $1/((j-0.5)^2\pi^2)$ are eigenvalues.
We made use of this basis ${\cal R}$, and took different values of $p$, the number of basic elements. Other basis were also checked. The role played by the basis and the dimension $p$ consists of allowing to approximate both the covariate function $X$ and the alternative. A good basis is that which provides a good approximation with a small dimension $p$. The Karhunen-Lo\`eve basis is obviously a good basis to approximate the covariate function.

Several possible choices were studied for the privileged direction, $\gamma_0^{(p)}$. Here we present results with an uninformative one, with the same coefficients in all basic elements.

Different values for the penalization were considered. Since the statistic is standardized before penalization, natural values for $\alpha_n$ are 3, 4, 5 or 6. Small values of the penalization provide results that are similar to those obtained with the direction maximizing the standardized statistic, that is, $\arg \max_{\gamma\in{\cal S}^p} nh^{1/2}Q_n(\gamma)/\hat{v}_n(\gamma)$, while larger values of the penalization lead to results similar to those obtained with the chosen direction $\gamma_0^{(p)}$. The results presented here correspond to the penalization $\alpha_n=5$.

To compute the statistic for each direction, we used the Epanechnikov kernel, $K(x)=1-x^2$ for $x\in[-1,1]$. A grid of bandwidths was used in order to explore the effect of the bandwidth on the power of the test.

To estimate the conditional variance, the two estimators (\ref{var_est_tau}) and (\ref{var_est1b}) were considered. For the estimator (\ref{var_est1b}), a kernel estimator of the errors' conditional variance was used, with uniform kernel and bandwidth $h_v=0.5n^{-1/6}$. We observed a better power under the alternative with the estimator (\ref{var_est1b}), so the results will be given with this estimator.

For the optimization in the hypersphere ${\cal S}^p$, a grid of 300 points was used in the case of $p=3$ dimensions, and a grid of 1280 points in the case of $p=5$ dimensions. Aditionally, a local refinement of the optimum was used, with 9 points in dimension $p=3$ and 81 points in dimension $p=5$. For each original sample, we used 199 bootstrap samples to compute the critical value. One thousand original samples were used to approximate the percentages of rejection. The results shown in figures below were obtained with dimension $p=3$.

Figure \ref{Figure1} shows the empirical powers obtained for a grid of values of the bandwidth both under the null hypothesis of the functional linear and under the quadratic alternative. We observe that the power is not very much affected by the bandwidth around a possibly optimal value.

For purposes of comparison, the empirical power of the Horvath and Reeder (2011)'s test (HR test for brevity) is also shown. These authors proposed a test of significance of the quadratic effect under a functional quadratic model. Note that HR test is specially designed to detect quadratic alternatives to the linear model, as the one proposed here as the first scenario.
As expected, HR test is more powerful than our test, specially for dimension 1. This dimension represents the number of components in the estimation of the functional linear model, which in the case of the HR test coincides with the dimension used in the test statistic to estimate the quadratic deviation. HR test loses power when the dimension increases as a consequence of a bigger noise in the test statistic.

For our test, $m=3$ was used for the estimation of the functional linear model and $p=3$ was taken for the number of basic elements. Simulations were also carried out with other values of $m$ and $p$, and we observed that our test provides similar power when increasing each of these dimensions. The reason is that our test is not very much affected by the noise coming from increasing dimension, and this allows for a bigger dimension and consequently a better approximation of the deviation. It could be said that our test reaches a better trade-off between the noise coming from dimensionality and the approximation of the linear model and the alternative.

\begin{figure}[h!]
\begin{center}
\includegraphics[width=9cm]{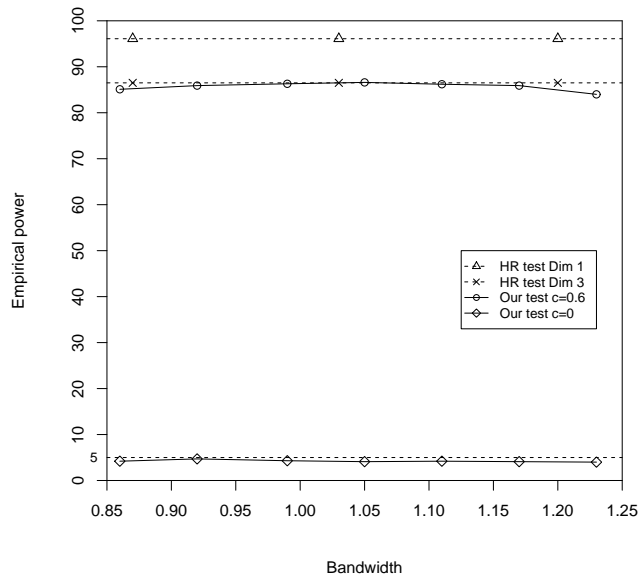}
\caption{Testing the functional linear model versus a quadratic alternative.}
\label{Figure1}
\end{center}
\end{figure}

Figure \ref{Figure2} shows the power of our test under the second scenario, where a functional linear model is tested versus a cubic alternative. As expected, HR test is not very powerful since it was not designed to detect this kind of alternative. The power of our test is good in a wide range of values of the bandwidth and the level is very well respected under the null hypothesis.

\begin{figure}[h!]
\begin{center}
\includegraphics[width=9cm]{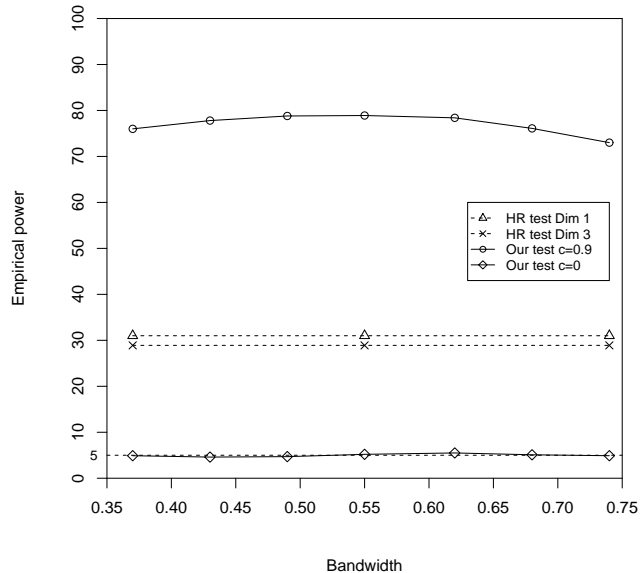}
\caption{Testing the functional linear model versus a cubic alternative.}
\label{Figure2}
\end{center}
\end{figure}

As an illustration of the behavior of our test for the goodness-of-fit of a more general parametric model, we considered the goodness-of-fit of the quadratic functional model, and obtained percentages of rejection under the null hypothesis and under the cubic alternative. That is, the simulated model would be
$$Y=a+\int_0^1 b(t) X(t) \,dt+\int_0^1\int_0^1 h(s,t) X(s)X(t)\, ds\,dt+\delta(X)+U^0$$
which consists of a quadratic functional model, as considered in Yao and M\"uller (2010) and Horvath and Reeder (2011), plus a deviation represented by the function $\delta(\cdot)$.
Here $b(t)=1$ for all $t\in[0,1]$, and $h(s,t)=0.6$ for all $s,t\in[0,1]$, which are the same linear and quadratic effects considered before. The deviation was chosen to be $\delta=\delta_c$, that is, the cubic deviation already considered in (\ref{alt_cubic}). Then, the idea will be to carry out a goodness-of-fit of the functional quadratic model, and evaluate its performance under the null $d=0$ and under the cubic alternative $d=0.9$. Results are shown in Figure \ref{Figure3}. To the best of our knowledge there is no parametric test for comparison in this situation.

The results show a certain conservative behavior for large bandwidths, while the power is generally good, with no much effect coming from the bandwidths. Results were obtained for different values of $p$ and the dimension of the parametric estimator, with generally good and expectable outcomes. Further investigation will be provided elsewhere.

\begin{figure}[h!]
\begin{center}
\includegraphics[width=9cm]{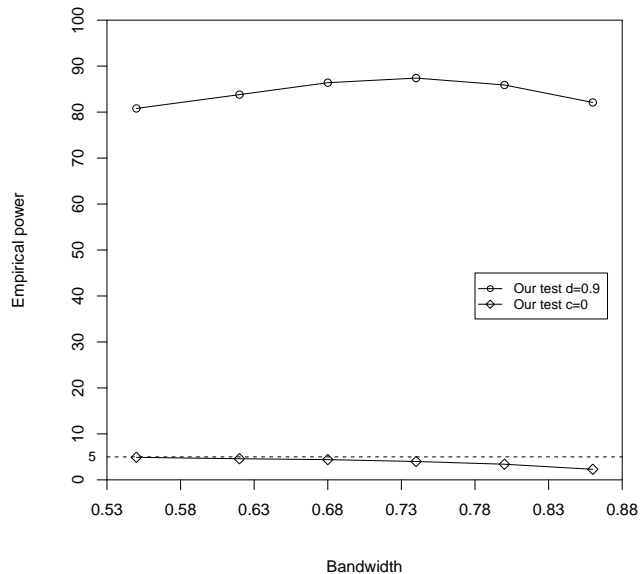}
\caption{Testing the functional quadratic model versus a cubic alternative.}
\label{Figure3}
\end{center}
\end{figure}

\subsection{Application to real data}

The test proposed here is applied to the data set collected by Tecator and available at http://lib.stat.cmu.edu/datasets/tecator.
The task is to predict the fat content of a
meat sample on the basis of its near infrared absorbance spectrum.
For each sample of finely chopped pure meat, a 100 channel spectrum of absorbances was recorded using a Tecator Infratec Food and Feed Analyzer, a spectrometer that works in the wavelength range 850-1050 nm. These absorbances can be thought of as a discrete approximation to the continuous record, $X_i(t)$. Also, for each sample of meat, the fat content, $Y_i$, was measured by analytic chemistry. The data set contains 240 samples of meat.

Yao and M\"uller (2010) proposed using a functional quadratic model to predict the fat content, $Y_i$, of a meat based on its absorbance spectrum, $X_i(t)$. Horvath and Reeder (2011) applied their parametric test to check whether the quadratic term is needed, versus the null hypothesis of a functional linear model. Their reached the conclusion that the quadratic effect is significant, and then, the functional quadratic model is needed.

We will apply the test proposed here, first to check the goodness-of-fit of the functional linear model, and later the goodness-of-fit of the functional quadratic model. Table 1 below contains the p-values corresponding to our test for different values of the bandwidth, the parameter $m$ for model estimation and the dimension $p$. We can conclude that both the functional linear and the functional quadratic models should be rejected for the Tecator data set.  This conclusion confirms the empirical results of Chen, Hall and M\"{u}ller (2011) who proposed an additive double index model. Indeed, the link functions estimated by Chen, Hall and M\"{u}ller do not show respective linear and quadratic patterns which indicates that the usual functional linear and the functional quadratic models do not provide a satisfactory fit.

$$\begin{tabular}{ccccccccccc}
&&\multicolumn{4}{c}{Linear model}&&\multicolumn{4}{c}{Quadratic model} \\
& $h$ & 0.18 & 0.30 & 0.44 & 0.59 & &  0.18 & 0.30 & 0.44 & 0.59 \\
\hline
$p=2$ & $m=1$ & 0.5 & 0.4 & 0.2 & 0.6 & & 2.4 & 1.4 & 1.6 & 3.3 \\
  & $m=2$ & 0.2 & 0.0 & 0.0 & 0.3 & & 0.6 & 0.3 & 0.0 & 0.7 \\
  & $m=3$ & 0.0 & 0.0 & 0.0 & 0.0 & & 0.0 & 0.0 & 0.0 & 0.0 \\
&&&&&&&&&& \\
$p=3$ & $m=1$ & 0.0 & 0.0 & 0.2 & 0.2 & & 0.0 & 0.1 & 0.1 & 0.0 \\
  & $m=2$ & 0.0 & 0.0 & 0.0 & 0.1 & & 0.2 & 0.0 & 0.1 & 0.0 \\
  & $m=3$ & 0.0 & 0.0 & 0.0 & 0.0 & & 0.0 & 0.0 & 0.0 & 0.0 \\ \hline
\end{tabular}$$
Table 1. p-values (in percentages) obtained by applying the new test to the Tecator data set.

\newpage

\section{Appendix}

\label{secproofs} \setcounter{equation}{0}

\subsection{Dimension reduction: proof of the fundamental lemma}

\begin{proof}[Proof of Lemma \ref{lem1}]
(A). The implication $ (1)\,\Rightarrow \,(2)$ is obvious. To prove $ (2)\,\Rightarrow \,(1)$, note first that for any
$\beta \neq 0$, the $\sigma -$field generated by $\langle X,\beta\rangle  $ is the
same as the $\sigma -$field generated by $\langle X,\beta \rangle /\|\beta
\|_{L^2} $. Next, by elementary properties of the
conditional expectation, we obtain that for any $\beta\in L^2[0,1] $, including
$\beta =0 $,
\begin{eqnarray}
0 &=& \mathbb{E}\left[ \exp \{i\langle X,\beta \rangle  \}\mathbb{E}(Z\mid \langle X,\beta\rangle )\right] \notag\\
&=& \mathbb{E}\left[ \exp \{i\langle X,\beta\rangle \}Z\right]\notag\\
& =&
\mathbb{E}\left[ \exp \{i\langle X,\beta\rangle \}\mathbb{E}(Z\mid X)\right]
\; .\label{mopz}
\end{eqnarray}
Write $Z=Z^{+}-Z^{-}$ where $Z^{+}$ and $Z^{-}$ are the
positive and negative parts of $Z$, and deduce that for any $\beta $, $%
\mathbb{E}\left[ \exp \{i\langle X,\beta\rangle \}\mathbb{E}(Z^{+}\mid X)\right] =
\mathbb{E}\left[ \exp \{i\langle X,\beta\rangle \}\mathbb{E}(Z^{-}\mid X)\right] $.
As distinct positive finite measures cannot have the same characteristic
function,  see for instance Theorem 3.1 of Parthasarathy (1967),
this implies that $\mathbb{E}(Z^{+}\mid X)=\mathbb{E}(Z^{-}\mid X)$
and hence $\mathbb{E}(Z\mid X)=0$ almost surely. For $ (2)\,\Rightarrow \,(3)$ it suffices to identify
$\gamma$ with an element in $L^2[0,1]$ of norm 1. To prove $ (3)\,\Rightarrow \,(1)$, fix arbitrarily
$\beta\in L^2[0,1]$, $\beta\neq 0$. For any $p\geq 1$, let  $\beta^{(p)}$
be the projection of $\beta$ on the subspace generated by the first $p$ elements of the basis  $\mathcal{R}$. For any $p$ sufficiently large such that $\|\beta^{(p)}\|=\|\beta^{(p)}\|_{L^2}>0$ we have
$\mathbb{E}(Z\mid \langle X,\beta^{(p)}\rangle ) = \mathbb{E}(Z\mid \langle X,\beta^{(p)}/\|\beta^{(p)}\|\rangle ) = 0,$ where for the last equality we use the fact that $\beta^{(p)}/\|\beta^{(p)}\|\in \mathcal{S}^p$ and (3). By elementary properties of the conditional expectation,
\begin{eqnarray*}
0 &=& \mathbb{E}\left[ \exp \{i\langle X,\beta^{(p)} \rangle  \}\mathbb{E}(Z\mid \langle X,\beta^{(p)}\rangle )\right]\\
&=& \mathbb{E}\left[ \exp \{i\langle X,\beta \rangle \}Z \exp \{i\langle X,  \beta^{(p)} - \beta \rangle \}\right] \\
&=& \mathbb{E}\left[ \exp \{i\langle X,\beta \rangle \}\mathbb{E}(Z\mid X) [\exp \{i\langle X,  \beta^{(p)} - \beta \rangle \} - 1]\right]\\
&&+ \mathbb{E}\left[ \exp \{i\langle X,\beta \rangle \}\mathbb{E}(Z\mid X)\right].
\end{eqnarray*}
From the Taylor expansion with integral reminder and elementary calculus, one obtains that  $\forall x\in\mathbb{R}$,  $|\exp (ix)-1|\leq  \min\{|x|,2\}$.
From this and the Cauchy-Schwarz inequality, deduce that for any $p$,  $$|\exp \{i \langle X,  \beta^{(p)} - \beta \rangle \} - 1| \leq
\min\{\|X\|_{L^2}\|\beta^{(p)} - \beta \|_{L^2},2\}.$$ Since $\| \beta^{(p)} - \beta \|_{L^2}\rightarrow 0$ when $p\rightarrow\infty$, by Lebesgue Dominated Convergence Theorem it follows that  necessarily $$\mathbb{E}\left[ \exp \{i\langle X,\beta \rangle \}\mathbb{E}(Z\mid X) \right]=0.$$ Since $\beta\in L^2[0,1]$ was arbitrarily fixed, apply again the arguments we used after equation (\ref{mopz}) to deduce that (1) hold true. The equivalence  $(3)\,\Leftrightarrow \,(4)$ follows from Lemma 2.1-(A) of Lavergne and Patilea (2008) applied for each $p$.

(B). From  (A)-4 above, there exists some $p_0\geq 1$ such that $\mathbb{P} [\mathbb{E} (Z\mid X^{(p_0)}) = 0] <1$. On the other hand,  by the property of iterated expectations, for any $p > p_0$,
$$
\mathbb{E} (Z\mid X^{(p_0)}) = \mathbb{E}[ \mathbb{E} (Z\mid X^{(p)}) \mid X^{(p_0)}] \;.
$$
Thus necessarily $\mathbb{P} [\mathbb{E} (Z\mid X^{(p)}) = 0] <1,$  $\forall p>p_0$.
Fix arbitrarily $p > p_0$ and notice that for any $b\in[-1,1]$,
$$\{\gamma\in\mathcal{S}^p : \mathbb{E}(Z \mid \langle X, \gamma \rangle )=0 \,\, a.s.\, \}\subset    \{\gamma\in\mathcal{S}^p : \mathbb{E}(Z \exp\{ b \langle X , \gamma \rangle\} )=0 \}.$$ The expectations in the sets in the last display are well-defined since $$\mathbb{E}(|Z \exp\{ b \langle X , \gamma \rangle\}|)\leq
\mathbb{E}(|Z |\exp\{ |b| |\langle X , \gamma \rangle|\})\leq \mathbb{E}(|Z |\exp\{  \| X \|\}) <\infty.$$
Let us notice that
$$
\{ b\widetilde \gamma : b\in[-1,1], \; \gamma\in\mathcal{S}^p ,\; \mathbb{E}(Z \exp\{ b \langle X , \gamma \rangle\} )=0  \}\subset \widetilde A_p
$$
where
$$\widetilde A_p : = \{ \widetilde \gamma\in\mathbb{R}^{p}: \|\widetilde \gamma \|\leq 1, \mathbb{E}(Z \exp\{  \langle X^{(p)} , \widetilde \gamma \rangle\} )=0 \}.
$$
Thus, to prove  (B)
it suffice to show that the set $\widetilde A_p$
has Lebesgue measure zero in $\mathbb{R}^{p-1}$ and is not dense in the unit ball of $\mathbb{R}^{p-1}$.\footnote{An easy way to check that it is indeed sufficient to derive such properties for
$\widetilde A_p$ is to represent the sets in the hyperspherical coordinates.}

For these purpose, we will use the following property: if $W_1$ and $W_2$ are real-valued random variables such that
$\mathbb{E} (|W_1| \exp \{ a|W_2| \}) <\infty $ for some $a>1$, then
\begin{equation}\label{qsd}
\mathbb{P} (\mathbb{E}(W_1\mid W_2)=0 ) <1 \; \Longrightarrow \; \text{ the set } \{  |b| < a : \mathbb{E} (W_1 \exp\{bW_2 \}) = 0\}\; \text{is empty or finite.}
\end{equation}
To prove this property, decompose  $W_1 = W_1^+ - W_1^-$ and use the positive part $W_1^+$ to define
$$
\lambda^+ (b)= \mathbb{E} (W_1^+ \exp\{ b W_2 \}) = \mathbb{E} (\mathbb{E}(W_1^+ \mid W_2) \exp\{ b W_2 \}) = \int_{\mathbb{R}}\exp\{b w\} d\mu^+ (w),
$$
$|b|<a$, where $d\mu^+ (w) = \mathbb{E}(W_1^+ \mid W_2=w) dF_{W_2}(w)$ and $F_{W_2}$ is the probability distribution function of $W_2$. Use the negative part of $W_1$ to define $\lambda^- (b)$, $|b|<a$ similarly. Since $W_1$ is integrable,  $\mu^- (\mathbb{R}) , \mu^+ (\mathbb{R}) <\infty.$ The functions  $ \lambda^- (\cdot)/ \mu^- (\mathbb{R})$ and $ \lambda^+ (\cdot) / \mu^+ (\mathbb{R})$ are the Laplace transforms of the probability distributions $\mu^-/\mu^- (\mathbb{R})$ and $\mu^+/\mu^+ (\mathbb{R})$. The condition $\mathbb{E} (|W_1| \exp \{a |W_2| \}) <\infty $ implies that these Laplace transforms, and hence $ \lambda^+ (\cdot)$ and $ \lambda^- (\cdot)$, are (real) analytic on the domain $(-a,a)$. See for instance Proposition 8.4.4 in Chow and Teicher (1997).  Notice that the set in (\ref{qsd}) is the set of $b\in(-a,a)$ for which $ \lambda^- (b) =  \lambda^+ (b).$
If $\mathbb{P} (\mathbb{E}(W_1\mid W_2)=0 ) <1$, $ \lambda^+ (\cdot)$ and $ \lambda^- (\cdot)$ cannot coincide on $(-a,a)$. Thus the set in (\ref{qsd}) contains only isolated points $b$ from the interval $(-a,a)$, which means that it is necessarily empty or finite.

Now, recall that  we want to investigate the cardinality of the  $\widetilde A_p,$ subset of the unit ball of $\mathbb{R}^{p}.$ From (A), there exists $\widetilde \gamma \in \mathcal{S}^{p}$ such that $\mathbb{P} (\mathbb{E}(Z \mid \langle X^{(p)},\widetilde  \gamma \rangle ) = 0 )<1.$ Then,  property (\ref{qsd}) applied with some $a>1$, $W_1=Z$ and $W_2 = \langle X^{(p)},\widetilde  \gamma \rangle$ implies that the set
$$
\{  |b| < a : \mathbb{E} (Z \exp\{\langle X^{(p)}, b \widetilde \gamma  \rangle \}) = 0\}
$$
is empty or finite. Deduce that there exists $\upsilon^\star$ in the unit ball of $\mathbb{R}^{p}$, arbitrarily close to the origin, in particular with $\|\upsilon^\star\|<1/2$,  such that  $\mathbb{E} (Z \exp\{\langle X^{(p)}, \upsilon^\star  \rangle \}) \neq 0.$ Next, we adapt the lines of the proof of Lemma 1 in Bierens (1990). Let $Z^\star = Z \exp\{\langle X^{(p)}, \upsilon^\star  \rangle \}$. By construction,
$\mathbb{P} (\mathbb{E}(Z^\star \mid x_1,\cdots, x_l) = 0 ) <1,$ for $l=1,\cdots,p.$ Define the sets
$$
A^\star _l = \{ (t_1,\cdots,t_l)\in\mathbb{R}^l : \|(t_1,\cdots,t_l)\| \leq 3/2, \mathbb{E}(Z^\star  \exp\{ (x_1 t_1+\cdots+x_l t_l)\})=0 \},
$$
$l=1,\cdots,p$. Since $|t_1x_1+ \langle X^{(p)},\upsilon^\star\rangle|\leq \{|t_1| +  \| \upsilon^\star \|\} \| X^{(p)} \|< 2\| X^{(p)} \|,$ deduce from property  (\ref{qsd}) applied with $a=2$, $W_1 = Z$ and $W_2 = \exp\{t_1x_1+ \langle X^{(p)},\upsilon^\star\rangle \}$ that the set $A^\star _1$ is empty or finite.
Now, define the set
$$
A^{\star\star}_2 (t_1) = \{ |t_2|\leq 3/2 :  \mathbb{E}(Z^\star  \exp\{ x_1 t_1 \} \exp\{ x_2 t_2 \} )=0 \}.
$$
If $t_1\notin A^\star _1$, replace $Z^\star$ by $Z^\star  \exp\{ x_1 t_1\}$ and use again property (\ref{qsd}) with $a=7/2$, $W_1 = Z$ and $W_2 = \exp\{t_1x_1+ t_2 x_2 +\langle X^{(p)},\upsilon^\star\rangle \}$ to deduce that the set $A^{\star\star}_2 (t_1)$ is empty or finite. This means that $A^\star _2$ is contained in the union of some sets $B^\prime \times \mathbb{R}$ and
$\mathbb{R} \times B^{\prime\prime} $ where $B^{\prime}$ and $B^{\prime\prime}$ are empty or finite. Repeat the arguments with $l=3,\cdots,p$ and deduce that $A^\star _p$ has Lebesgue measure zero in $\mathbb{R}^{p}.$ Since the norm of $\upsilon^\star$ could be taken arbitrarily small such that
$\widetilde A_p \subset A^\star _p, $ we can now easily deduce that $\widetilde A_p$ has Lebesgue measure zero in the unit ball of $\mathbb{R}^{p}$. The fact that $\widetilde A_p$ is not dense  in the unit ball of $\mathbb{R}^{p}$ is a direct consequence of the fact that $A^\star _p$ intersected with unit ball of $\mathbb{R}^{p}$ is not dense.
\end{proof}


\subsection{Rates of convergence: technical lemmas}

For $\nu$ a probability measure on a sample space, $\mathcal{F}$ a class of functions and $\varepsilon >0$, let  $N(\varepsilon, \mathcal{F}, L^2(\nu))$, denote the covering number, that is the minimal number of balls of radius $\varepsilon$ in $L^2(\nu)$ needed to cover $\mathcal{F}$. See Van der Vaart and Wellner (1996) or Kosorok (2008) for the definitions. For real
random variables, $A_n\asymp_{\mathbb{P}} B_n$ means that there exists a constant $C>1$ such that  $\mathbb{P}(1/C \leq A_n/B_n \leq  C)$ goes to 1 when $n$ grows. In the following $C, C_1, c, c_1,\cdots$ represent constants that may change from line to line.

\begin{lem}\label{entropy}
For any $p\geq 1$, let $$\mathcal{F}_{1p} = \{(v_1,v_2)\mapsto K(h^{-1}\langle v_1 -v_2,\gamma\rangle) : v_1,v_2\in\mathbb{R}^p, \gamma\in\mathcal{S}^p, h>0 \}$$
and
$$\mathcal{F}_{2p} = \{v\mapsto \mathbb{E}[K(h^{-1}\langle X -v,\gamma\rangle)\mid  : v\in\mathbb{R}^p, \gamma\in\mathcal{S}^p, h>0 \}.$$ If Assumption \ref{K}-(a) holds, there exist constants $c_1,c_2,c_3>0$ such that for any $p\geq 1$ and $0<\varepsilon<1$ and any $\nu_1$ probability measure on $\mathbb{R}^{p}\times \mathbb{R}^{p}$ and
$\nu_2$ probability measure on $\mathbb{R}^{p}$,
\begin{equation}\label{cov_entr}
N(\varepsilon, \mathcal{F}_{jp\,} , L^2(\nu_j)) \leq c_1 (c_2/\varepsilon)^{c_3 p}, \quad j=1,2.
\end{equation}
\end{lem}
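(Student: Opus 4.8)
The plan is to realize both $\mathcal{F}_{1p}$ and $\mathcal{F}_{2p}$ as subclasses of, or simple images of, a single ``master'' class obtained by composing the kernel $K$ with a $p$-dimensional family of linear functionals, and then to invoke the uniform entropy bound for VC-subgraph classes, keeping track that the VC index grows only linearly in $p$ and that all constants are free of $p$, of $n$ and of the underlying probability measure. First absorb the bandwidth: setting $\eta=\gamma/h\in\mathbb{R}^p$, the map $(v_1,v_2)\mapsto h^{-1}\langle v_1-v_2,\gamma\rangle$ becomes $(v_1,v_2)\mapsto \langle v_1-v_2,\eta\rangle$, and as $\gamma$ ranges over $\mathcal{S}^p$ and $h>0$ the parameter $\eta$ ranges over a subset of $\mathbb{R}^p$. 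Hence $\mathcal{F}_{1p}\subseteq\mathcal{G}_p:=\{(u,w)\mapsto K(\langle u-w,\eta\rangle):\eta\in\mathbb{R}^p\}$, a class of functions on $\mathbb{R}^p\times\mathbb{R}^p$ uniformly bounded by $\|K\|_\infty<\infty$ (finite since $K$ is of bounded variation on $\mathbb{R}$).

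Next I would establish the VC-subgraph structure of $\mathcal{G}_p$. The functions $(u,w)\mapsto\langle u-w,\eta\rangle$, $\eta\in\mathbb{R}^p$, span a vector space of dimension at most $p$, so by the standard fact that a $d$-dimensional vector space of real functions is VC-subgraph of index at most $d+2$ (van der Vaart and Wellner, 1996), this class is VC-subgraph of index $\le p+2$. Since $K$ has bounded variation, write $K=K^{+}-K^{-}$ with $K^{\pm}$ bounded and nondecreasing; composition with a monotone function preserves the VC-subgraph property, so $\{K^{+}(\langle u-w,\eta\rangle)\}_{\eta}$ and $\{K^{-}(\langle u-w,\eta\rangle)\}_{\eta}$ are each VC-subgraph of index $O(p)$, and $\mathcal{G}_p$ is contained in the set of pointwise differences of a member of the first and a member of the second. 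Using the bound $\sup_Q N(\varepsilon\|F\|_{L^2(Q)},\mathcal{F},L^2(Q))\le CV(16e)^{V}\varepsilon^{-2(V-1)}$ valid for a VC-subgraph class of index $V$ with envelope $F$ (van der Vaart and Wellner, 1996, Theorem 2.6.7), applied with constant envelope $F\equiv\|K\|_\infty$ and $V=O(p)$, and multiplying the two pieces and relabelling constants, one gets $\sup_Q N(\varepsilon,\mathcal{G}_p,L^2(Q))\le c_1(c_2/\varepsilon)^{c_3 p}$ for all $p\ge 1$ and $0<\varepsilon<1$, with $c_1,c_2,c_3$ depending only on $K$. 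Since $\mathcal{F}_{1p}\subseteq\mathcal{G}_p$ and the supremum is over all probability measures, this is (\ref{cov_entr}) for $j=1$; the extra norm-$1$ and positivity restrictions on $\gamma$ only shrink the class.

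For $\mathcal{F}_{2p}$, relabel $u=x$, $w=v$: each of its elements is $v\mapsto\psi_\eta(v):=\mathbb{E}[K(\langle X^{(p)}-v,\eta\rangle)]$, i.e.\ the image of a member of $\mathcal{G}_p$ under integration of the first variable against the fixed law $P_{X^{(p)}}$ of $X^{(p)}$ (if the conditioning bar in the definition is meant literally, one integrates against a regular conditional law instead, which changes nothing below). By the Cauchy--Schwarz/Jensen inequality, for any $\eta_1,\eta_2$ and any probability measure $\nu_2$ on $\mathbb{R}^p$,
\[
\|\psi_{\eta_1}-\psi_{\eta_2}\|_{L^2(\nu_2)}^2\le \int\!\!\int\big|K(\langle x-v,\eta_1\rangle)-K(\langle x-v,\eta_2\rangle)\big|^2\,dP_{X^{(p)}}(x)\,d\nu_2(v),
\]
so an $\varepsilon$-net of $\mathcal{G}_p$ in $L^2(P_{X^{(p)}}\otimes\nu_2)$ induces an $\varepsilon$-net of $\mathcal{F}_{2p}$ in $L^2(\nu_2)$. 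Hence $N(\varepsilon,\mathcal{F}_{2p},L^2(\nu_2))\le \sup_Q N(\varepsilon,\mathcal{G}_p,L^2(Q))$ and the bound from the previous paragraph applies verbatim, yielding (\ref{cov_entr}) for $j=2$.

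I expect the main obstacle to be the bookkeeping in the VC-subgraph/entropy step: one must ensure that the VC index of the relevant classes is $O(p)$ with an \emph{absolute} constant and that the constants in the uniform entropy inequality do not secretly depend on $p$ or on the probability measure. The bounded variation of $K$ (reduction to monotone pieces on which composition is harmless) together with the finite $p$-dimensional parametrization of the linear functionals $\langle\cdot,\eta\rangle$ is exactly what delivers this; a minor point to verify is that treating $h>0$ as a free parameter does not enlarge the parameter set beyond $\mathbb{R}^p$, which is immediate since $\gamma/h\in\mathbb{R}^p$.
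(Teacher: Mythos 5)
Your proof is correct and takes essentially the same route as the paper, which likewise reduces $\mathcal{F}_{1p}$ to a VC-subgraph class via the monotone decomposition of the bounded-variation kernel $K$ composed with the $p$-dimensional family of linear functionals (citing Kosorok's Theorem 9.3 and Lemmas 9.6, 9.9 and Nolan--Pollard's Lemma 16), and handles $\mathcal{F}_{2p}$ by the integration/Jensen argument that is exactly the content of Nolan--Pollard's Lemma 20. You have simply written out in detail the steps the paper delegates to those references.
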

\begin{proof}
Since $K$ can be written as a difference of two monotone functions, the result for $\mathcal{F}_{1p}$ is an easy consequence of the Theorem 9.3, Lemmas 9.6 and 9.9 of Kosorok (2008) and Lemma 16 of Nolan and Pollard (1987); see also their Lemma 22-(ii). For $\mathcal{F}_{2p}$, use the bound for $\mathcal{F}_{1p}$ and Lemma 20 of Nolan and Pollard (1987).
\end{proof}

\bigskip

\begin{lem}\label{tech_u_stat}
Let Assumptions \ref{D}   and \ref{K} hold true and let $l$ be some strictly positive integer.
For each $n$ and $p$ that may depend on $n$, define the $U-$processes
$$
V_n^{(k_1,k_2)} (\gamma;l) = \frac{1}{n(n-1) h}\sum_{1\leq i\neq j\leq n} U_i^{k_1} U_j^{k_2} K_{h}^l \left(\langle X_{i}-X_{j},\gamma\rangle \right),\quad \gamma\in B_p, \;\;k_1,k_2\in\{ 0,2\}.
$$
Then $$\sup_{\gamma\in B_p} |V_n^{(0,0)} (\gamma;l)|  \asymp_{\mathbb{P}} 1,  \quad \sup_{\gamma\in B_p} \{1/|V_n^{(2,2)} (\gamma;l)| \} = O_{\mathbb{P}} (1)  \; \text{ and  } \;\sup_{\gamma\in B_p} |V_n^{(2,0)} (\gamma;l)| = O_{\mathbb{P}} (1).$$
\end{lem}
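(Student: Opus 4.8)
The plan is to treat all three $U$-processes with the same device: a Hoeffding (H\'ajek) decomposition in $\gamma$, legitimate because $K$, having a strictly positive Fourier transform, is even, so $K_h^l(\langle X_i-X_j,\gamma\rangle)$ is symmetric in $(i,j)$. Writing $\theta_n^{(k_1,k_2)}(\gamma;l)=\mathbb{E}[h^{-1}U_1^{k_1}U_2^{k_2}K_h^l(\langle X_1-X_2,\gamma\rangle)]$ for the mean, I would write $V_n^{(k_1,k_2)}(\gamma;l)=\theta_n^{(k_1,k_2)}(\gamma;l)+L_n^{(k_1,k_2)}(\gamma;l)+D_n^{(k_1,k_2)}(\gamma;l)$, the mean plus the linear projection part plus the completely degenerate part. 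Two facts then have to be established: (i) $\theta_n^{(k_1,k_2)}(\gamma;l)$ is bounded above uniformly in $\gamma\in B_p$, and, when $k_1=k_2$, also bounded away from $0$ uniformly in $\gamma\in B_p$; (ii) $\sup_{\gamma\in B_p}\{|L_n^{(k_1,k_2)}(\gamma;l)|+|D_n^{(k_1,k_2)}(\gamma;l)|\}=o_{\mathbb{P}}(1)$ under Assumptions \ref{D} and \ref{K}.

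For (i), conditioning on the $X_i$'s and using $\mathbb{E}[U^0\mid X]=1$ and $\underline{\sigma}^2\le\mathbb{E}[U^2\mid X]\le\overline{\sigma}^2$ a.s. (Assumption \ref{D}-(b)), it suffices to control $\theta_n(\gamma):=\mathbb{E}[h^{-1}K_h^l(\langle X_1-X_2,\gamma\rangle)]$. The variable $\langle X_1-X_2,\gamma\rangle$ is the difference of two i.i.d. copies with density $f_\gamma$, so its density has Fourier transform $|\mathcal{F}[f_\gamma]|^2\ge0$, and Plancherel's identity gives, up to the usual constant, $\theta_n(\gamma)=(2\pi)^{-1}\int\mathcal{F}[K^l](hx)|\mathcal{F}[f_\gamma](x)|^2dx$. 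The upper bound follows from $\|\mathcal{F}[K^l]\|_\infty\le\|K^l\|_1<\infty$ and $\int f_\gamma^2\le C_1$, the latter a consequence of Assumption \ref{D}-(c)(i) since $f_\gamma^2\le f_\gamma^{2+\delta}$ on $\{f_\gamma>1\}$. For the lower bound, $\mathcal{F}[K]>0$ makes $K^l$ a product of positive-definite functions, hence positive-definite, so $\mathcal{F}[K^l]\ge0$; being continuous with $\mathcal{F}[K^l](0)=\int K^l>0$, it is $\ge\tfrac12\int K^l$ on some $[-\eta,\eta]$, so for $n$ large enough that $h\epsilon\le\eta$ one has $\theta_n(\gamma)\ge(4\pi)^{-1}(\int K^l)\int_{|x|\le\epsilon}|\mathcal{F}[f_\gamma](x)|^2dx\ge(4\pi)^{-1}C_2\int K^l$ by Assumption \ref{D}-(c)(ii), uniformly in $\gamma\in B_p$.

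For (ii), I would use the covering-number bounds of Lemma \ref{entropy}: the classes at play are $\mathcal{F}_{1p}$ (degenerate part) and the conditional-expectation class $\mathcal{F}_{2p}$ (linear part), multiplied in the cases $(k_1,k_2)\in\{(2,0),(2,2)\}$ by the fixed factor $U_i^{k_1}U_j^{k_2}$, so the relevant metric entropy is $O(p\log(1/\varepsilon))$. Inserting this into a maximal inequality for degenerate $U$-processes, and using $\mathbb{E}[(h^{-1}K_h^{l}(\langle X_1-X_2,\gamma\rangle))^2]=h^{-1}\mathbb{E}[h^{-1}K_h^{2l}(\langle X_1-X_2,\gamma\rangle)]=O(h^{-1})$ uniformly (the Fourier argument of the previous paragraph applied to $K^{2l}$), bounds $\sup_\gamma|D_n^{(0,0)}(\gamma;l)|$ and $\sup_\gamma|L_n^{(0,0)}(\gamma;l)|$ by quantities of the same orders handled in the proof of Lemma \ref{leem1}, namely $O_{\mathbb{P}}(n^{-1}h^{-1/2}p^{3/2}\ln n)$ and a polylogarithmic multiple of $(nh)^{-1/2}$, both $o_{\mathbb{P}}(1)$ under Assumption \ref{K}-(b,c). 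For the processes carrying the unbounded $U_i^2$, I would truncate the $U_i$'s at a slowly growing level $b_n$: the truncated part is handled by the same inequality with the envelope inflated by a power of $b_n$, while the expectation of the remainder is at most $Ch^{-1}b_n^{4-m}\mathbb{E}|U|^m$, which vanishes for a suitable $b_n$; the two requirements on $b_n$ are compatible precisely because $m>11$ in Assumption \ref{D}-(a), exactly as in the proof of Lemma \ref{leem1}. Combining (i) and (ii): $V_n^{(0,0)}(\gamma;l)=\theta_n(\gamma)+o_{\mathbb{P}}(1)$ uniformly with $\theta_n(\gamma)\asymp1$ uniformly, whence $V_n^{(0,0)}(\gamma;l)\asymp_{\mathbb{P}}1$; the mean of $V_n^{(2,2)}(\gamma;l)$ lies in $[\underline{\sigma}^4\theta_n(\gamma),\overline{\sigma}^4\theta_n(\gamma)]$, so $\inf_\gamma V_n^{(2,2)}(\gamma;l)$ stays bounded away from $0$ with probability tending to $1$, giving $\sup_\gamma\{1/|V_n^{(2,2)}(\gamma;l)|\}=O_{\mathbb{P}}(1)$; and the mean of $V_n^{(2,0)}(\gamma;l)$ is at most $\overline{\sigma}^2\theta_n(\gamma)\le C$ uniformly, so $\sup_\gamma|V_n^{(2,0)}(\gamma;l)|=O_{\mathbb{P}}(1)$.

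The main obstacle is step (ii): the uniform control of the degenerate $U$-process over $\gamma\in B_p$ while $p\to\infty$ and $h\to0$ simultaneously, together with the truncation bookkeeping forced by the unbounded factors $U_i^2$. This is where the dimension-explicit entropy bound of Lemma \ref{entropy}, a $U$-process maximal inequality, and the moment condition $m>11$ must be combined, and it is what pins down the admissible ranges of rates for $h$ and $p$ in Assumption \ref{K}-(b,c).
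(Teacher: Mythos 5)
Your proposal follows essentially the same route as the paper's proof: a Hoeffding decomposition whose degenerate and linear parts are controlled uniformly over $\gamma\in B_p$ via the covering-number bounds of Lemma \ref{entropy} together with Major's inequality and a maximal inequality for empirical processes, a truncation of the $U_i$'s whose remainder is killed by the moment condition $m>11$, and the Fourier--Plancherel computation combined with Assumption \ref{D}-(c)(i)--(ii) and the positivity of $\mathcal{F}[K^l]$ to bound the mean $\mathbb{E}[h^{-1}K_h^l(\langle X_1-X_2,\gamma\rangle)]$ from above and away from zero uniformly. The argument is correct (modulo minor slips, e.g.\ the exponent in the truncation remainder), and in places it is actually more explicit than the paper's own treatment of the cases $k_1,k_2\in\{0,2\}$.
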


\begin{proof}
To simplify the writings, we write  $V_n^{(0)}$ (resp. $V_n^{(2)}$) instead of $V_n^{(0,0)}$
(resp. $V_n^{(2,2)}$). First consider the case $k_1 = k_2 =0.$ Hoeffding's decomposition allows us to decompose the centered  $U-$processes $h V_n^{(0)}(\gamma;l) - \mathbb{E}[h V_n^{(0)}(\gamma;l)]$ as a sum of two degenerate $U-$processes $V_{1n}^{(0)}(\gamma;l)$ and $V_{2n}^{(0)}(\gamma;l)$, $\gamma\in B_p$, of respective orders 1 and 2 that are indexed by families of functions obtained by finite sums of sets like $\mathcal{F}_{1p}$ and $\mathcal{F}_{2p}$ in Lemma \ref{entropy} above.  By Lemma 16 of Nolan and Pollard (1987), deduce that the families indexing  $V_{1n}^{(0)}(\gamma;l)$ and $V_{2n}^{(0)}(\gamma;l)$ are families with covering numbers bounded by polynomials in $1/\varepsilon$ with coefficient and order depending on $c_1$, $c_2$ and $c_3$ but independent of $n$ and $p$. (When $l >1$, $K$ should be replaced by $K^l$ in the definitions of $\mathcal{F}_{1p}$ and $\mathcal{F}_{2p}$, but given the properties of $K(\cdot)$ this has no impact on the conclusion.) Next, by Theorem 2 of Major (2006), $\sup_{\gamma\in B_p} |V_{2n}^{(0)} (\gamma;l) | = O_{\mathbb{P}} (n^{-1} h^{1/2} p^{3/2} \ln n)$; see the proof of our Lemma \ref{leem1} for an example of application of the result of Major (2006). On the other hand,  by Theorem 2.14.1 or Theorem 2.14.9 of van der Vaart and Wellner (1996), we have $\sup_{\gamma\in B_p} |V_{1n} ^{(0)} (\gamma;l) | = O_{\mathbb{P}} (n^{-1/2} p^{1/2}).$ Gathering  the rates and using Assumption \ref{K}-(b,c) we deduce that $ V_n^{(0)}(\gamma;l) - \mathbb{E}[V_n^{(0)}(\gamma;l)]=o_{\mathbb{P}}(1)$,  uniformly in $\gamma\in B_p$. Now, it remains to show that there exist constants $c_1,c_2>0$ such that $c_1\leq \mathbb{E}[V_n^{(0)}(\gamma;l)] = \mathbb{E}[h^{-1} K_{h}^l \left(\langle X_{1}-X_{2},\gamma\rangle \right)]\leq c_2 $, $\forall \gamma \in B_p$ and  $h$ sufficiently small.
Using the properties of the Fourier and inverse Fourier transforms, Fubini theorem, the independence of $X_1$ and $X_2$ and Plancherel theorem
\begin{eqnarray}\label{ffr}
\mathbb{E}[h^{-1} K_{h}^l \left(\langle X_{1}-X_{2},\gamma\rangle \right)] &=& (2\pi)^{-1/2}\mathbb{E} \int_{\mathbb{R}} \exp\{i t \langle X_{1},\gamma\rangle\}\exp\{ -i t \langle X_{2},\gamma\rangle\}\mathcal{F}[K^l] (t) dt\notag\\
&=& (2\pi)^{1/2} \int_{\mathbb{R}} |\mathcal{F}[f_\gamma](t)|^2\mathcal{F}[K^l] (ht) dt\notag \\
&\leq & (2\pi)^{1/2} \int_{\mathbb{R}} |\mathcal{F}[f_\gamma](t)|^2 dt
=
(2\pi)^{1/2} \int_{\mathbb{R}} f^2_\gamma(x) dx.
\end{eqnarray}
Assumption \ref{ass_dc}-(c)(i) guarantees that $\mathbb{E}[h^{-1} K_{h}^l \left(\langle X_{1}-X_{2},\gamma\rangle \right)]$ is uniformly bounded from above. On the other hand, using the positiveness of $\mathcal{F}[K]$ (hence of $\mathcal{F}[K^l]$),  the fact that $\mathcal{F}[K^l]$ is necessarily bounded away from zero on compact intervals,  the previous display and   Assumption \ref{ass_dc}-(c)(ii), deduce that there exists constants $c_3$ and $c_4$ such that $\forall p\geq 1$, $\forall \gamma\in B_p$ and $\forall h\leq 1$ (say),
$$
\mathbb{E}[h^{-1} K_{h}^l \left(\langle X_{1}-X_{2},\gamma\rangle \right)]\geq c_3  \int_{|t|\leq \epsilon} |\mathcal{F}[f_\gamma](t)|^2 dt \geq c_4>0.
$$
In the case $k_1=k_2=2$, by Assumption \ref{D}-(b),  $\mathbb{E}(V_n^{(2)}(\gamma;l))\geq \underline{\sigma}^{\,4} \mathbb{E}[h^{-1} K_{h}^l \left(\langle X_{1}-X_{2},\gamma\rangle  \right)] ,$ $\forall \gamma.$ Next use again Hoeffding's decomposition  for
$V_n^{(2)}(\gamma;l) - \mathbb{E}(V_n^{(2)}(\gamma;l)).$ The degenerate $U-$statistics of order 1 and 2 can be treated with the same arguments as above.
Deduce that $1/V_n^{(2)}(\gamma;l)$ is uniformly bounded in probability.  The case $k_1=0$ and $k_2=2$ could be handled with similar arguments.
\end{proof}

\bigskip

\begin{lem}\label{tech_emp_proc}
Under the conditions of Lemma \ref{diff_fund}, for any $\epsilon >0,$
\begin{equation}\label{vawe2}
\sup_{\gamma\in\mathcal{S}^p, \;\; t\in\mathbb{R}} \left|\frac{1}{n} \sum_{i=1}^n U_i  K_h (\langle X_i ,\gamma\rangle - t)\right|=
O_{\mathbb{P}}(p^{1/2}n^{-1/2+\epsilon}h^{1/2}).
\end{equation}
Moreover, there exists $a >0$ such that for any $\epsilon >0,$
\begin{equation}\label{vawe1}
\sup_{\gamma\in B_p} \left|\frac{1}{n(n-1)h} \sum_{1\leq i\neq j\leq n} U_i  K_h (\langle X_i - X_j ,\gamma\rangle)\right| = O_{\mathbb{P}}(p^{1/2}n^{-1/2+\epsilon}h^{-1/2 +a }).
\end{equation}
\end{lem}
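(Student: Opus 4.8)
The plan is to read both (\ref{vawe2}) and (\ref{vawe1}) as uniform deviation bounds — for an empirical process, respectively for a $U$-process — indexed by $\gamma$ and, in (\ref{vawe2}), by the location $t$ as well, and in both cases to rely on the fact that the relevant classes of functions are Euclidean (VC-type), with $L^2$-covering numbers of the form $c_1(c_2/\varepsilon)^{c_3 p}$ uniformly in $p$. This is exactly Lemma \ref{entropy} combined with elementary permanence properties: adjoining the location parameter $t$ turns the linear functional $\langle\cdot,\gamma\rangle$ into an affine one in one extra coordinate (which raises the VC index by one), and multiplying by the coordinate $U$ only rescales $\varepsilon$ and the envelope. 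We argue throughout under $H_0$ — the setting of Lemma \ref{diff_fund} — so that $\mathbb{E}(U\mid X)=0$ and the expectations of all summands vanish; without $H_0$ one would first center. One final preparatory step: truncate $U$ by writing $U_i=U_i\mathbb{I}\{|U_i|\le n^{\eta}\}+U_i\mathbb{I}\{|U_i|>n^{\eta}\}$ for a small $\eta>0$; since $|K_h|\le\|K\|_\infty$ and $\mathbb{E}|U|^m<\infty$ for every $m$, Markov's inequality renders the discarded part $o_{\mathbb{P}}(n^{-A})$ for every $A>0$, hence negligible relative to both target rates, because Assumption \ref{K}-(b,c) forces $h^{-1}=O(n^{1/2})$ and $p=O((\ln n)^\lambda)$.

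For (\ref{vawe2}) I would, after truncation, apply a maximal inequality for Euclidean classes that tracks the uniform variance — Talagrand's concentration inequality, or the uniform-entropy bounds behind Theorems 2.14.1 and 2.14.9 of van der Vaart and Wellner (1996). The envelope is $O(n^{\eta})$ and the uniform variance is $\sup_{\gamma,t}\mathbb{E}[U^2 K_h^2(\langle X,\gamma\rangle-t)]=O(h)$ (by Assumption \ref{D}-(b) for $\mathbb{E}(U^2\mid X)$ and the control of the densities of the projections $\langle X,\gamma\rangle$ from Assumption \ref{ass_dc}-(c); see the last paragraph). The inequality then bounds the expectation of the left-hand side of (\ref{vawe2}) by a quantity of order $\sqrt{p\,h\,\ln n/n}+p\,n^{\eta}\ln n/n$. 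The first term is $O(p^{1/2}n^{-1/2+\epsilon}h^{1/2})$ for every $\epsilon>0$ and $n$ large, and the second has the same order once $\eta$ is small enough, using $h\gg n^{-1/2}$; Markov's inequality then gives (\ref{vawe2}).

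For (\ref{vawe1}) I would symmetrize the kernel and apply Hoeffding's decomposition, just as in the proof of Lemma \ref{tech_u_stat}. Under $H_0$ the deterministic term $\mathbb{E}[h^{-1}U_1 K_h(\langle X_1-X_2,\gamma\rangle)]$ is zero, and the linear (first-order) term equals $n^{-1}\sum_i U_i\,\phi_{h,\gamma}(\langle X_i,\gamma\rangle)$ with $\phi_{h,\gamma}=h^{-1}K_h\ast f_\gamma$. Young's and H\"older's inequalities yield $\|\phi_{h,\gamma}\|_4=O(h^{-1/4})$ and hence $\mathbb{E}[\phi_{h,\gamma}^2(\langle X,\gamma\rangle)]=O(h^{-1/2})$, uniformly over $\gamma\in B_p$, so the empirical-process argument of the previous paragraph (now with variance factor of order $h^{-1/2}$) bounds the linear part by $O_{\mathbb{P}}(p^{1/2}n^{-1/2}(\ln n)^{1/2}h^{-1/4})$. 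The degenerate (second-order) term is a degenerate $U$-process indexed by a Euclidean class assembled from finite sums of functions from classes like $\mathcal{F}_{1p}$ and $\mathcal{F}_{2p}$ of Lemma \ref{entropy} (with $K$ replaced by $K^2$ and an extra factor $U$), whose kernel has $L^2$-size $O(h^{-1/2})$; Theorem 2 of Major (2006), used exactly as in the proofs of Lemmas \ref{leem1} and \ref{tech_u_stat}, bounds it by $O_{\mathbb{P}}(n^{-1}h^{-1/2}p^{3/2}\ln n)$. Both rates are $o(p^{1/2}n^{-1/2+\epsilon}h^{-1/2+a})$ for every $\epsilon>0$ once $a\in(0,1/4)$ is fixed (say $a=1/8$), again using $h\gg n^{-1/2}$ and $p=O((\ln n)^\lambda)$, which proves (\ref{vawe1}).

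The main obstacle is the uniform-in-$\gamma$ (and, in (\ref{vawe2}), uniform-in-$t$) control of the variance factors such as $\mathbb{E}[K_h^2(\langle X,\gamma\rangle-t)]$ and $\mathbb{E}[\phi_{h,\gamma}^2(\langle X,\gamma\rangle)]$. The naive bound for the first involves the sup-norm of the density of $\langle X,\gamma\rangle$, which need not remain bounded over the whole sphere $\mathcal{S}^p$ as $p\to\infty$. The way around this is to use only the $L^2$ and $L^{2+\delta}$ information on $f_\gamma$ from Assumption \ref{ass_dc}-(c) — enough, via Young's and H\"older's inequalities, to bound the $L^q$-norms of $K_h\ast f_\gamma$ that enter the variance estimates — while tolerating a polynomial-in-$p$ loss, which is absorbed into the factor $n^{\epsilon}$ precisely because $p$ grows only polylogarithmically in $n$ (Assumption \ref{K}-(c)). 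The same polylogarithmic growth of $p$ is what keeps the entropy contributions $p^{1/2}$ and $p^{3/2}$ from interfering with the announced rates.
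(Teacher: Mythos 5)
Your proposal is correct and follows essentially the same route as the paper: truncation of $U$ at a small power of $n$ with the tail controlled by Markov's inequality, the covering-number bounds of Lemma \ref{entropy} fed into a variance-sensitive maximal inequality for VC-type empirical processes (the paper uses Theorem 2.14.16 of van der Vaart and Wellner) for (\ref{vawe2}), and for (\ref{vawe1}) a Hoeffding decomposition with Major (2006) handling the degenerate part and the same empirical-process inequality handling the projection term, whose variance is controlled via the $L^2$/$L^{2+\delta}$ bounds on $f_\gamma$ from Assumption \ref{ass_dc}-(c)(i). The only substantive difference is the route to that last variance bound — you use Young plus H\"older to get $\mathbb{E}[\phi_{h,\gamma}^2]=O(h^{-1/2})$, hence $a$ up to $1/4$, whereas the paper uses a Jensen argument giving $O(h^{-2/(2+\delta)})$ and $a=\delta/(2(2+\delta))$ — but since the lemma only asserts the existence of some $a>0$, both work.
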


\begin{proof}
Let $\mathcal{G} $ be a family of  functions $|g|\leq 1$ with covering number bounded by $(K/\varepsilon)^{V}.$ With the notation of van der Vaart and Wellner (1996), let $\mathbb{G}_n g$, $g\in\mathcal{G}$ be the empirical process indexed by $\mathcal{G}$ and let $\| \mathbb{G}_n \|_{ \mathcal{G} } = \sup_{\mathcal{G}}|\mathbb{G}_n g|.$  From Theorem 2.14.16 of van der Vaart and Wellner, after tracing the constants in the proof,
there exists $C>0$ independent of $n$ and $p$ such that for any $\delta>0$, $\exists C_\delta >0$ independent of  $n$ and $p$ such that
\begin{equation}\label{vwww}
\mathbb{P}\left( \| \mathbb{G}_n \|_{ \mathcal{G} } >t\right)\leq C \left(\frac{C_\delta}{\sigma} \right)^{2V} \left( 1\vee \frac{t}{\sigma} \right)^{3V+\delta} \exp\left\{ -\frac{1}{2} \frac{t^2}{\sigma^2 + (3+t)/\sqrt{n}} \right\}
\end{equation}
for every $t>0$ and every $\sup_{\mathcal{G}} Var (g)\leq  \sigma^2 \leq 1.$ Fix an arbitrary $\epsilon >0$, use  the covering number of $\mathcal{F}_{1p}$ in Lemma \ref{entropy} and apply this inequality with $g(U,X) = n^{-\epsilon/2} U \mathbb{I} (|U| \leq n^{\epsilon/2})  K_h (\langle X ,\gamma\rangle - t),$  $\sigma = c h^{1/2}$ and $t=\widetilde t p^{1/2} n^{-\epsilon/2} \ln^{1/2} n.$  Next derive the rate of  the reminder $n^{-1}\sum_{i=1}^n U_i (|U| > n^{\epsilon/2}) K_h (\langle X_i ,\gamma\rangle - t)$ taking absolute values, recalling that $\mathbb{E}(|U|^m)<\infty$ for every $m\geq 1,$ and using Markov inequality. For the second quantity, apply the Hoeffding decomposition to the second order $U-$statistics defined by
$h(U_i,X_i, U_j,X_j) =  n^{-\epsilon/2} U_i \mathbb{I} (|U_i| \leq n^{\epsilon/2})  K_h (\langle X_i -X_j ,\gamma\rangle).$
For the degenerate $U-$statistics of order 2 multiply by $h$ and proceed like in Lemma \ref{leem1}. To apply inequality (\ref{vwww}) for the empirical process in  Hoeffding decomposition we need a bound for the variance of the conditional expectation $\mathbb{E}\left[h^{-1}K_h (\langle X_i - X_j ,\gamma\rangle)\mid X_i \right]$.  Let $\delta>0$ such that $\int_{\mathbb{R}} f_\gamma ^{2+\delta} \leq  C$, $\forall \gamma\in B_p,$ for some constant $C$. By a change of variables, the boundedness of $K$, Jensen inequality, and again a change of variables we have for some constant $C^\prime,$
\begin{eqnarray*}
\mathbb{E}\{\mathbb{E}^2\left[h^{-1}K_h (\langle X_i - X_j ,\gamma\rangle)\mid X_i \right] \} &=&  \int_{\mathbb{R}}\left[\int_{\mathbb{R}} f_\gamma (u-th)K(t) dt \right] ^2f_\gamma (u) du\\
&\leq&  \int_{\mathbb{R}}\left(\int_{\mathbb{R}} f_\gamma^{2+\delta} (u-th)dt\right)^{2/(2+\delta)}
 f_\gamma (u) du\\
& \leq & C^\prime h^{- 2/(2+\delta)}.
\end{eqnarray*}
Use the covering number of $\mathcal{F}_{2p}$ in Lemma \ref{entropy} and inequality (\ref{vwww}) to deduce
\begin{equation*}
\sup_{\gamma\in B_p} \!\left|\frac{1}{n} \!\!\sum_{1\leq i\leq n}\!\! U_i \mathbb{I} (|U_i| \!\leq\! n^{\epsilon/2}) \mathbb{E}\left[h^{-1} \!K_h (\langle X_i \!-\! X_j ,\gamma\rangle)\! \mid \!X_i \right]\right| \!=\! O_{\mathbb{P}}(p^{1/2}h^{- 1/(2+\delta)}n^{-1/2+\epsilon/2}\ln^{1/2} \!n ).
\end{equation*}
Use  Markov inequality to bound $n^{-1}\sum_{i=1}^n U_i (|U| > n^{\epsilon/2})\mathbb{E}[h^{-1} K_h (\langle X_i -X_j,\gamma\rangle )\mid X_i]$ and hence complete the proof.
\end{proof}

\subsection{Testing for no-effect: proofs of the asymptotic results}

Let
\begin{equation}\label{drev}
v_{n}^2 (\gamma_0^{(p)}) = \frac{2}{n(n-1)h}\!\sum\limits_{j\neq i}
\sigma^{2}_{\gamma_0^{(p)}}( \langle X_{j}, \gamma_0^{(p)} \rangle) \sigma^{2}_{\gamma_0^{(p)}}( \langle X_{j}, \gamma_0^{(p)} \rangle)  K_{h}^{2}\left(
\langle X_{i}-X_{j},  \gamma_0^{(p)} \rangle  \right) .
\end{equation}

\begin{lem}\label{rav_u}
Let Assumptions \ref{D}, \ref{K} and hypothesis $H_0$ hold true.
Then $\widehat \tau_n^2 (\gamma_0^{(p)}) = \tau_n^2 (\gamma_0^{(p)}) \{ 1+o_{\mathbb{P}} (1)\} = v_n^2 (\gamma_0^{(p)}) \{ 1+o_{\mathbb{P}} (1)\}$. Moreover,
$\widehat{v}_{n}^{2}  = \tau_{n}^{2} (\gamma_{0}^{(p)})\{1 + o_{\mathbb{P}} (1)\}$, with  $\widehat{v}_{n}^{2}$ defined in (\ref{var_est1b}), provided that condition (\ref{nonp}) holds true.
\end{lem}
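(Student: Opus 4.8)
The plan is to compare each of the three statistics $\widehat\tau_n^2(\gamma_0^{(p)})$, $\tau_n^2(\gamma_0^{(p)})$ and $v_n^2(\gamma_0^{(p)})$ with its \emph{unconditional} expectation, to show that these expectations coincide under $H_0$ and are bounded away from $0$ and $\infty$, and finally to handle the estimated variance $\widehat v_n^2$ separately via condition (\ref{nonp}). Write $\gamma_0=\gamma_0^{(p)}$. Each of the first three quantities is a $U$-statistic of the form $\tfrac{2}{n(n-1)h}\sum_{i\neq j}\phi_i\phi_j K_h^2(\langle X_i-X_j,\gamma_0\rangle)$ with, respectively, $\phi_i=U_i^2$, $\phi_i=\mathrm{Var}(U_i\mid X_i^{(p)})$ and $\phi_i=\mathrm{Var}(U_i\mid\langle X_i,\gamma_0\rangle)$; note $\widehat\tau_n^2(\gamma_0)=2V_n^{(2,2)}(\gamma_0;2)$ in the notation of Lemma \ref{tech_u_stat}. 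Under $H_0$, Assumption \ref{ass_dc}-(b) gives $\underline\sigma^2\le\mathrm{Var}(U\mid X^{(p)})\le\overline\sigma^2$ and, by the tower property, $\underline\sigma^2\le\mathrm{Var}(U\mid\langle X,\gamma_0\rangle)=\mathbb{E}[\mathrm{Var}(U\mid X)\mid\langle X,\gamma_0\rangle]\le\overline\sigma^2$; hence each of the three expectations lies between $2\underline\sigma^4$ and $2\overline\sigma^4$ times $\mathbb{E}[h^{-1}K_h^2(\langle X_1-X_2,\gamma_0\rangle)]$, which by the Fourier/Plancherel identity of display (\ref{ffr}) together with Assumptions \ref{ass_dc}-(c)(i),(ii),(iii) (in particular boundedness of $f_{\gamma_0}$) is bounded above and below by positive constants for all $n$ and all small $h$. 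So all three expectations are $\asymp 1$.

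Next I would show that each statistic equals its expectation up to a factor $1+o_{\mathbb{P}}(1)$. For $\widehat\tau_n^2(\gamma_0)=2V_n^{(2,2)}(\gamma_0;2)$ this is contained in Lemma \ref{tech_u_stat}, whose proof establishes (via Hoeffding's decomposition, the entropy bounds of Lemma \ref{entropy}, and the $U$-process maximal inequalities of Major (2006) and van der Vaart and Wellner (1996)) that $V_n^{(2,2)}(\gamma;l)-\mathbb{E}[V_n^{(2,2)}(\gamma;l)]=o_{\mathbb{P}}(1)$ uniformly over $\gamma\in B_p$, the $U_i^2$ envelope being controlled by truncating $U_i$ at $n^{\epsilon/2}$ and using $\mathbb{E}|U|^m<\infty$ with $m>11$, exactly as in the proof of Lemma \ref{tech_emp_proc}. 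For $\tau_n^2(\gamma_0)$ and $v_n^2(\gamma_0)$ the factors $\phi_i$ are bounded by $\overline\sigma^2$, so at the single point $\gamma_0$ no entropy machinery is needed: the Hoeffding decomposition has a linear part that is a normalized sum of i.i.d. bounded variables, hence $O_{\mathbb{P}}(n^{-1/2})$, and a degenerate part of variance $O(n^{-2}\mathbb{E}[h^{-2}K_h^4(\langle X_1-X_2,\gamma_0\rangle)])=O(n^{-2}h^{-1})$ (again by the computation in (\ref{ffr}) and boundedness of $f_{\gamma_0}$), hence $O_{\mathbb{P}}(n^{-1}h^{-1/2})$; both are $o_{\mathbb{P}}(1)$ under Assumption \ref{K}-(b). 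Since the expectations are $\asymp 1$, each statistic equals its expectation times $1+o_{\mathbb{P}}(1)$.

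It remains to check that the three expectations coincide. Under $H_0$, $\mathbb{E}(U\mid X)=0$ yields $\mathrm{Var}(U\mid X^{(p)})=\mathbb{E}(U^2\mid X^{(p)})$ and $\mathrm{Var}(U\mid\langle X,\gamma_0\rangle)=\mathbb{E}(U^2\mid\langle X,\gamma_0\rangle)$. Conditioning $\mathbb{E}[\widehat\tau_n^2(\gamma_0)]$ on $X_1^{(p)},X_2^{(p)}$ and using the independence of $(U_1,X_1)$ and $(U_2,X_2)$ together with the fact that $\langle X_1-X_2,\gamma_0\rangle$ is a function of $(X_1^{(p)},X_2^{(p)})$ replaces $U_i^2$ by $\mathbb{E}(U_i^2\mid X_i^{(p)})=\mathrm{Var}(U_i\mid X_i^{(p)})$, so $\mathbb{E}[\widehat\tau_n^2(\gamma_0)]=\mathbb{E}[\tau_n^2(\gamma_0)]$; conditioning further on $\langle X_1,\gamma_0\rangle,\langle X_2,\gamma_0\rangle$ and using that $\langle X_i,\gamma_0\rangle$ is $\sigma(X_i^{(p)})$-measurable together with the tower property replaces $\mathrm{Var}(U_i\mid X_i^{(p)})$ by $\mathbb{E}(U_i^2\mid\langle X_i,\gamma_0\rangle)=\mathrm{Var}(U_i\mid\langle X_i,\gamma_0\rangle)$, so $\mathbb{E}[\tau_n^2(\gamma_0)]=\mathbb{E}[v_n^2(\gamma_0)]$. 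Chaining the equivalences of the last two paragraphs gives $\widehat\tau_n^2(\gamma_0)=\tau_n^2(\gamma_0)\{1+o_{\mathbb{P}}(1)\}=v_n^2(\gamma_0)\{1+o_{\mathbb{P}}(1)\}$.

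Finally, for $\widehat v_n^2$ defined in (\ref{var_est1b}), condition (\ref{nonp}) gives $\widehat\sigma^2_{\gamma_0}(\langle X_i,\gamma_0\rangle)=\sigma^2_{\gamma_0}(\langle X_i,\gamma_0\rangle)(1+\varepsilon_{in})$ with $\max_{1\le i\le n}|\varepsilon_{in}|=o_{\mathbb{P}}(1)$, whence $|\widehat v_n^2-v_n^2(\gamma_0)|\le(2\max_i|\varepsilon_{in}|+\max_i|\varepsilon_{in}|^2)\,v_n^2(\gamma_0)=o_{\mathbb{P}}(1)$ because $v_n^2(\gamma_0)=O_{\mathbb{P}}(1)$; thus $\widehat v_n^2=v_n^2(\gamma_0)\{1+o_{\mathbb{P}}(1)\}=\tau_n^2(\gamma_0)\{1+o_{\mathbb{P}}(1)\}$, which is the remaining assertion. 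The one genuinely technical point is the concentration of $\widehat\tau_n^2(\gamma_0)$, i.e. controlling the $U_i^2$ envelope in the $U$-process bounds; but this is precisely the truncation argument already carried out for Lemmas \ref{leem1} and \ref{tech_emp_proc} and subsumed in Lemma \ref{tech_u_stat}, so no new difficulty arises — everything else is bookkeeping with conditional expectations.
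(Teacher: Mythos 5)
Your proof is correct, and the overall strategy (show all three quadratic forms concentrate, identify a common center bounded away from $0$ and $\infty$, then chain) is the same as the paper's; the difference is in the mechanism used to link the statistics. The paper never passes through the unconditional expectation: it exploits the exact identities $\mathbb{E}[\widehat\tau_n^2(\gamma_0^{(p)})\mid X_1,\dots,X_n]=\tau_n^2(\gamma_0^{(p)})$ and (conditioning on $\langle X_i,\gamma_0^{(p)}\rangle$) the analogous one for $v_n^2$, and then uses the variance decomposition $\mathbb{E}\{\widehat\tau_n^2-\tau_n^2\}^2=\mathbb{E}\{\mathrm{Var}[\widehat\tau_n^2\mid X_1,\dots,X_n]\}\le \mathrm{Var}(\widehat\tau_n^2)$, with a single crude bound $\mathrm{Var}(A_n)\le C/(nh^2)\to 0$ for all the $U$-statistics involved (this is where Assumption \ref{K}-(b), i.e.\ $nh^2\to\infty$, enters). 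This buys two things: the equality of centers is automatic rather than a separate tower-property computation, and no entropy/Major machinery or truncation of $U_i^2$ is needed at the fixed direction $\gamma_0^{(p)}$ — only $\mathbb{E}(U^4)<\infty$. Your route through the common unconditional mean is equally valid and your tower-property argument for $\mathbb{E}[\widehat\tau_n^2]=\mathbb{E}[\tau_n^2]=\mathbb{E}[v_n^2]$ is exactly right (note it implicitly requires conditioning on the projections $X_i^{(p)}$ rather than on $X_i$, since $\sigma_p^2$ is a function of $X^{(p)}$ and $K_h(\langle X_i-X_j,\gamma_0^{(p)}\rangle)$ is $\sigma(X_i^{(p)},X_j^{(p)})$-measurable — the paper is itself slightly loose on this point); your Hoeffding-based fluctuation bounds are sharper than the paper's but cost you the truncation argument for the $U_i^2$ envelope in $\widehat\tau_n^2$, which the paper's conditional-variance trick sidesteps entirely. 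Your treatment of $\widehat v_n^2$ via the multiplicative form of (\ref{nonp}) and nonnegativity of the summands is the argument the paper declares "similar and omitted", so no gap there.
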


\begin{proof}
First let us notice that for any $n$ and any  $V_{1i}, V_{2i}$, $1\leq i\leq n$, a set of i.i.d. random variables with $\mathbb{E}(V_{1i}^2+ V_{2i}^2)<\infty$ and
\[
A_n = \frac{1}{n(n-1)h} \sum_{1\leq i\neq j\leq n} V_{1i} V_{2j} K(h^{-1} \langle X_i - X_j ,\gamma^{(p)}_0 \rangle),
\]
there exists some constant $C$ (independent of $n$) such that
\begin{eqnarray}\label{rav_uu}
Var (A_n) & \leq & \frac{C}{n} Var( V_{1i} V_{2j} h^{-1}K(h^{-1} \langle X_i - X_j ,\gamma^{(p)}_0 \rangle) ) \notag \\
& \leq &  \frac{C}{nh^2} \mathbb{E}[ \zeta_1^2(\langle X_i  ,\gamma^{(p)}_0 \rangle) \zeta_2^2 (\langle X_j  ,\gamma^{(p)}_0 \rangle)  K^2(h^{-1} \langle X_i - X_j ,\gamma^{(p)}_0 \rangle) ]\notag\\
& \leq &  \frac{C}{nh^2} \mathbb{E}[ \zeta_1^2(\langle X_i  ,\gamma^{(p)}_0 \rangle) ]  \mathbb{E}[\zeta_2^2 (\langle X_j  ,\gamma^{(p)}_0 \rangle) ] = \frac{C}{nh^2} \mathbb{E}(V^2_{1i})\mathbb{E}(V^2_{2i})
\end{eqnarray}
where $\zeta_l^2(\langle X_i  ,\gamma^{(p)}_0 \rangle) = \mathbb{E}(V^2_{li}\mid \langle X_i  ,\gamma^{(p)}_0 \rangle) $, $l=1,2$. Since  $nh^2 \rightarrow \infty$, we have $Var (A_n)\rightarrow 0$.

Now, to  check $\widehat \tau_n^2 (\gamma_0^{(p)}) = \tau_n^2 (\gamma_0^{(p)}) \{ 1+o_{\mathbb{P}} (1)\}$ take $V_{1i}=V_{2i}=U_i^2$.
We have $\mathbb{E} [\widehat \tau_n^2 (\gamma_0^{(p)})\mid X_1,\cdots,X_n] = \tau_n^2 (\gamma_0^{(p)})$ and
\begin{equation}\label{mpz}
\mathbb{E}\{ \widehat \tau_n^2 (\gamma_0^{(p)}) -  \tau_n^2 (\gamma_0^{(p)})\}^2 = \mathbb{E}\{ Var[\widehat \tau_n^2 (\gamma_0^{(p)})\mid X_1,\cdots,X_n ]\} \leq Var (\widehat \tau_n^2 (\gamma_0^{(p)}))\rightarrow 0.
\end{equation}
By the fact that $Var(U\mid X^{(p)})$ is bounded and bounded away from zero almost surely, and the fact that for $l=2$ and $l=4$, $\mathbb{E}[h^{-1} K_{h}^l \left(\langle X_{1}-X_{2},\gamma\rangle \right)]$ is bounded and bounded away from zero $\forall p\geq 1$, $\forall \gamma\in B_p$ and $\forall h\leq 1$, deduce that the expectation of $\tau_n^2 (\gamma_0^{(p)})$ stays away from zero and infinity and its variance tends to zero. This together with (\ref{mpz}) allow to conclude that $\widehat \tau_n^2 (\gamma_0^{(p)}) = \tau_n^2 (\gamma_0^{(p)}) \{ 1+o_{\mathbb{P}} (1)\}$. To obtain the same conclusion with $\tau_n^2 (\gamma_0^{(p)})$ replaced by $v_n^2 (\gamma_0^{(p)})$ it suffices to consider above conditional expectations given
$\langle X_1  ,\gamma^{(p)}_0 \rangle,\cdots,\langle X_n  ,\gamma^{(p)}_0 \rangle.$
The arguments for $\widehat{v}_{n}^{2}$ are similar and hence will be omitted. \end{proof}

\bigskip

\begin{proof}[Proof of Lemma \ref{leem1}]
Let $M>0$ be a real number that depend on $n$ in a way that will be
specified later, define $\eta _{i}^{M}=U _{i}\mathbb{I}(
| U_{i}| \leq M ) -\mathbb{E}( U _{i}\mathbb{I}(| U _{i}| \leq
M) \mid X_{i}^{(p)}) $ and consider the degenerate $U$-process
\begin{equation*}
U_{n}\widetilde{g}=\frac{1}{n(n-1)}\sum\limits_{j\neq i}\eta _{i}^{M}\eta
_{j}^{M}K_{h}\left( \langle X_{i}-X_{j},\gamma\rangle \right) =\frac{1}{n(n-1)}%
\sum\limits_{j\neq i}\widetilde{g}((\eta _{i}^{M},X_{i}),(\eta
_{j}^{M},X_{j});h,\gamma )
\end{equation*}%
defined by the functions $\widetilde{g}$ indexed by $h$ and $\gamma\in\mathcal{S}^p $. By Assumption \ref{D}  and \ref{K}-(a),
the arguments used in Lemma \ref{entropy} above for the class $\mathcal{F}_{1p}$, and Lemma 9.9-(vi) of Kosorok (2008),
the bounded family $\mathcal{F}_{3p} = \{\widetilde{g}: \gamma  \in\mathcal{S}^p,h>0\}$ has a covering number like in (\ref{cov_entr}). By Theorem 2 of Major (2006) and its
corollary, where we  assume without loss of generality that $0\leq K(\cdot
)\leq 1$,
\begin{multline}
\hspace{-0.3cm}\mathbb{P}\!\left( \sup_{\gamma\in\mathcal{S}^p }\left\vert U_{n}\widetilde{g}\right\vert\! \geq \!
\frac{th^{1/2}\ln n p^{3/2}}{(n-1)}\right) \!= \!  \mathbb{P}\!\left( \sup_{\gamma\in\mathcal{S}^p }\left\vert
\frac{1}{n}\sum\limits_{j\neq i}\frac{\eta _{i}^{M}}{M}\frac{\eta _{j}^{M}}{M%
}K_{h}\left( \langle X_{i}\!-\!X_{j},\gamma \rangle \right) \right\vert\!\!
\geq \!   \frac{th^{1/2}p^{3/2}\ln n}{M^{2}}\!\right)  \notag \\
\leq  C_{1}C_{2}\exp \left\{ -C_{3}\left( \frac{th^{1/2} p^{3/2}\ln n}{M^{2}\sigma
_{M}}\right) \right\}, \qquad \mbox{\rm for any }t>0\;,
\end{multline}%
\begin{equation}
\mbox{\rm provided }\qquad n\sigma _{M}^{2}\geq \frac{th^{1/2} p^{3/2}\ln n}{%
M^{2}\sigma _{M}}\geq C_{4}\left[ p+\max \left( \ln C_{2}/\ln n,0\right) %
\right] ^{3/2}\ln \frac{2}{\sigma _{M}}  \label{cond_major}
\end{equation}%
where $C_{1},\ldots C_{4}>0$ are some constants independent on $n$, $h$ and $%
M$ and
\begin{equation*}
\sigma _{M}^{2}=\sup_{\gamma\in\mathcal{S}^p}\mathbb{E}\left[ \left( \frac{\eta _{i}^{M}}{M}\right)
^{2}\left( \frac{\eta _{j}^{M}}{M}\right) ^{2}K_{h}^{2}\left(\langle X_{i} -X_{j},\gamma \rangle\right) \right] \;.
\end{equation*}%
From Assumption \ref{D}-(b,c) and using the arguments as in the last part of the proof of Lemma \ref{tech_u_stat} above, there is a constant $C>0$ independent of $n$ such that $
C^{-1}\leq \sigma _{M}^{2}M^{4}/h\leq C$.  Take $M^{4}=nhp^{-3/2} \ln^{-(1+\delta )}
n \rightarrow \infty$ with $\delta >0$ arbitrarily small. Hence $\sigma
_{M}^{2}$ is of order $n^{-1} p^{3/2} \ln^{1+\delta} n \rightarrow 0$ and for any $t>0$
\begin{equation}
n\sigma _{M}^{2}\geq \frac{nh}{CM^{4}}=C^{-1} p^{3/2} \ln ^{1+\delta }n \geq
\frac{t h^{1/2} p^{3/2} \ln n}{M^{2}\sigma _{M}}  \label{aux1}
\end{equation}
provided $n$ is large enough. On the other hand, for any constant
$C^{\prime }>0$
\begin{equation}
\frac{th^{1/2}p^{3/2}\ln n }{M^{2}\sigma _{M}}\geq C^{-1/2}tp^{3/2} \ln n \geq C^{\prime }p^{3/2}\ln
n  \rightarrow \infty \label{aux2}
\end{equation}
for any sufficiently large $t$. Since $(\ln n )^{-1} \ln (2/\sigma_M)
$ is bounded by a positive constant as $n$ goes to $\infty$, Equations
(\ref{aux1}) and (\ref{aux2}) show that (\ref{cond_major}) is
satisfied for our $M$, with $n$ and $t$ large enough.  By
Theorem 2 of Major (2006), $U_{n}\widetilde{g}=O_{\mathbb{P}}\left(
n^{-1}h^{1/2}p^{3/2}\ln n\right) .$

Now, it remains to study the tails of $%
U _{i}$, that is we have to derive the orders of the remainder
terms
\begin{equation*}
2R_{1n}+R_{2n}=\frac{2}{n(n-1)}\sum\limits_{j\neq i}\eta _{i}^{M}\xi
_{j}K_{h}\left( \langle X_{i}-X_{j},\gamma\rangle \right) +\frac{1}{%
n(n-1)}\sum\limits_{j\neq i}\xi _{i}\xi _{j}K_{h}\left( \langle
X_{i}-X_{j},\gamma\rangle \right)
\end{equation*}%
where $\xi _{i}=U _{i}-\eta _{i}^{M}=U
_{i}\mathbb{I}\left( \left\vert U _{i}\right\vert >M\right) -%
\mathbb{E}\left[ U _{i}\mathbb{I}\left( \left\vert U
_{i}\right\vert >M\right) \mid X_{i}\right].$
Now, $\mathbb{E}\left[ \sup_{\gamma }\left\vert R_{1n}\right\vert \right]
\leq C\mathbb{E}\left( \left\vert \eta _{i}^{M}\right\vert \left\vert \xi
_{j}\right\vert \right) \leq 2C\mathbb{E}\left( \left\vert U
_{i}\right\vert \right) \mathbb{E}\left( \left\vert \xi _{j}\right\vert
\right) \leq C^{\prime }\mathbb{E}\left( \left\vert \xi _{j}\right\vert
\right) $,
and thus by H\"{o}lder's and Chebyshev's inequalities
\begin{equation*}
\mathbb{E}\left( \left\vert \xi _{i}\right\vert \right) \leq 2\mathbb{E}%
\left[ \left\vert U _{i}\right\vert \mathbb{I}\left( \left\vert
U _{i}\right\vert >M\right) \right] \leq 2\mathbb{E}^{1/m}\left[
\left\vert U _{i}\right\vert ^{m}\right] \mathbb{P}^{(m-1)/m}\left[
\left\vert U _{i}\right\vert >M\right] \leq 2\mathbb{E}\left[
\left\vert U _{i}\right\vert ^{m}\right] \,M^{1-m}.
\end{equation*}
Now it remains to choose $m$ sufficiently large such that $M^{1-m}=o\left( n^{-1}h^{1/2}p^{3/2}\ln n\right) $. With Assumption \ref{K}-(b) and our choice of $M$, $m > 11$ will be sufficient.
Also it is clear that $\sup_{\gamma }|R_{2n}|$ is of smaller order than $\sup_{\gamma }|R_{1n}|$.

To prove that the inverse of the variance estimate is bounded in probability, in view of Lemma \ref{rav_u}, it remains to show that $1/\tau_n^{2} (\gamma)$, $\gamma\in B_p$, is uniformly bounded in probability. For this recall that $\sigma_p^2 ( X^{(p)} ) \geq \underline{\sigma}^2$
and  apply Lemma \ref{tech_u_stat}. Now the proof is complete. \end{proof}

\bigskip

\begin{proof}[Proof of Lemma \ref{beta}]
By definition, $nh^{1/2}Q_{n}(\gamma
_{0}^{(p)})/ \widehat v_n (\gamma
_{0}^{(p)}) \leq nh^{1/2} Q_{n}(\widehat{\gamma }_{n}) / \widehat v_n (\widehat \gamma
_n) -\alpha
_{n} \mathbb{I} (\widehat{\gamma }_{n} \neq \gamma_{0}^{(p)})) $. This implies that
$$0 \leq \mathbb{I} (\widehat{\gamma }_{n} \neq \gamma_{0}^{(p)}) \leq nh^{1/2} \alpha
_{n}^{-1}\left\{ Q_{n}( \widehat{\gamma } _{n})/ \widehat v_n (\widehat \gamma
_n )- Q_{n}(\gamma _{0}^{(p)})  / \widehat v_n (\gamma
_{0}^{(p)})\right\} .$$ From Lemmas \ref{leem1}, \ref{tech_u_stat} and \ref{rav_u},
\begin{eqnarray*}
\left| \frac{Q_{n}(
\widehat{\gamma }_{n})}{ \widehat v_n (\widehat \gamma
_n)} - \frac{ Q_{n}(\gamma _{0}^{(p)}) }{ \widehat v_n (\gamma
_{0}^{(p)})} \right|&\leq &  2 \max \left[ \sup_{\gamma \in B_p} \{1/\widehat \tau_n^2 (\gamma)\} ,\;  1/\widehat v_n^2  \right]  \sup_{\gamma \in B_p}|Q_{n}(\gamma)| \\& = & O_{\mathbb{P}}(n^{-1}h^{-1/2}p^{3/2} \ln n) .
\end{eqnarray*}
Then $\alpha _{n} p^{-3/2} \!/\ln n
\!\!\rightarrow\!\! \infty$ yields $\mathbb{I}(\widehat{\gamma }_{n} \!\neq\! \gamma_{0}^{(p)})
\!=\! o_{\mathbb{P}}(1).$ Thus  $\mathbb{P}(\widehat{\gamma}_{n}\! \neq\! \gamma_{0}^{(p)})\! = \mathbb{E} [%
\mathbb{I}(\widehat{\gamma }_{n} \!\neq \!\gamma_{0}^{(p)})] \!\rightarrow \!0$.
\end{proof}

\bigskip

\begin{proof}[Proof of Theorem \ref{as_law}]
From Lemma \ref{beta}, the probabilities of the events  $\{ Q_{n}(
\widehat{\gamma }_{n}) = Q_{n}(\gamma _{0}^{(p)})\}$ and $
\{\widehat{v}_{n}^{2} (\widehat{\gamma}_{n}) = \widehat{\tau}
_{n}^{2} ({\gamma}_{0}^{(p)})\}$, with $\widehat{v}_{n}^{2} (\cdot)$  defined in (\ref{var_est1a}), both converge to 1.
On the other hand, by Lemma \ref{rav_u} above $\widehat \tau_n^2 (\gamma_0^{(p)}) = \tau_n^2 (\gamma_0^{(p)}) \{ 1+o_{\mathbb{P}} (1)\}$. Moreover,
$\widehat{v}_{n}^{2}  = \tau_{n}^{2} (\gamma_{0}^{(p)})\{1 + o_{\mathbb{P}} (1)\}$, with  $\widehat{v}_{n}^{2}$ defined in (\ref{var_est1b}), provided that condition (\ref{nonp}) holds true.
Hence it suffices to derive the asymptotic distribution of $nh^{1/2} Q_n(\gamma_0^{(p)})/\tau_n(\gamma_0^{(p)})$ under $H_0$.
For this purpose we use Assumption \ref{D}-(c)(iii) and proceed like in  Theorem 3.3 and Lemma 6.2 of Patilea and Lavergne (2008); see also the CLT in Lemma 2 of Guerre and Lavergne (2005). Moreover we use our Lemma \ref{tech_u_stat} with $k_1=k_2=0$ and $l=2.$
To be exactly in the case of Lavergne and Patilea (2008), first consider $nh^{1/2} Q_n(\gamma_0^{(p)})/v_n(\gamma_0^{(p)})$
 with $v_n(\gamma_0^{(p)})$ defined in (\ref{drev}).
The arguments for the asymptotic normality of $nh^{1/2} Q_n(\gamma_0^{(p)})/v_n(\gamma_0^{(p)})$ are identical to those of Lavergne and Patilea and hence will be omitted.
Finally, by Lemma \ref{rav_u},  $v_{n}^2 (\gamma_0^{(p)}) = \tau_n^2(\gamma_0^{(p)})\{1+o_{\mathbb{P}}(1)\}$ and the stated result follows.
\end{proof}

\bigskip

\begin{proof}[Proof of Theorem \ref{altern}]
The proof is based on inequality (\ref{eqaa}).
Since $\mathbb{E}(U^2\mid X) \geq \underline{\sigma}^2 + r_n^2\delta^2(X),$ $\mathbb{E}(U\mid X) =  r_n\delta(X),$ and $Var(U\mid \langle X,\gamma_0^{(p)}\rangle) \geq \underline{\sigma}^2 + r_n^2 Var(\delta(X)\mid \langle X,\gamma_0^{(p)}\rangle )$, clearly the variance estimate $\widehat v_n (\gamma^{(p)}_0))$ stays away from zero. Hence it suffices to look at the behavior of $Q_n(\gamma)$. By Lemma \ref{lem1}-(B) there exists $p_0$  and $\widetilde \gamma\in B_{p_0}\subset \mathcal{S}^{p_0}$ ($p_0$ and $\widetilde \gamma$ independent of $n$) such that $\mathbb{E}[\delta(X) \mid \langle X,\widetilde  \gamma\rangle]\neq 0$. Since $\max_{\gamma\in B_p}Q_n(\gamma) \geq Q_n(\widetilde \gamma)$ for any $p\geq p_0$, it suffices to investigate the rate of $Q_n(\widetilde \gamma)$. We can write
\begin{eqnarray*}
Q_n(\widetilde \gamma) & = & \frac{1}{n(n-1)h}\sum_{i\neq j} U_i^0 U_j^0 K_h(\langle X_i - X_j ,\widetilde \gamma\rangle )\\
&&+ \frac{2r_n}{n(n-1)h}\sum_{i\neq j} U_i^0 \delta(X_j) K_h(\langle X_i - X_j ,\widetilde \gamma\rangle )\\
&&+\frac{r_n^2}{n(n-1)h}\sum_{i\neq j} \delta(X_i) \delta(X_j) K_h(\langle X_i - X_j ,\widetilde \gamma\rangle )\\
&=:& Q_{0n}(\widetilde \gamma) + 2r_nQ_{1n}(\widetilde \gamma)+ r_n^2Q_{2n}(\widetilde \gamma).
\end{eqnarray*}
Since $\widetilde \gamma$ is fixed (and of finite dimension),  $Q_{0n}(\widetilde \gamma)=O_{\mathbb{P}}(n^{-1}h^{-1/2})$ (cf. proof of Theorem \ref{as_law}). The $U-$statistic  $Q_{1n}(\widetilde \gamma)$ can be decomposed in a degenerate $U-$statistic of order 2 with the rate $O_{\mathbb{P}}(h^{-1}n^{-1}) = O_{\mathbb{P}}(n^{-1/2})$ and the sum average of centered variables
$$
\frac{1}{n}\sum_{1\leq i\leq n} U_i^0 \mathbb{E}[\delta(X_j)h^{-1} K_h(\langle X_i - X_j ,\widetilde \gamma\rangle )\mid X_i].
$$
Hence it suffice to bound  $v_n^2 = \mathbb{E}\{(U_i^0)^2 \mathbb{E}^2[\delta(X_j) h^{-1}K_h(\langle X_i - X_j ,\widetilde \gamma\rangle )\mid X_i]\}$. There are several set of assumptions on  $\delta$ and $f_{\widetilde\gamma}$  that could be used. Condition (i) implies that the map $x\mapsto\mathbb{E}[h^{-1}K_h(\langle x - X_j ,\widetilde \gamma\rangle)]$ is bounded. This combined with the bounded conditional variance of $U_i^0$ and the finite second order moment of $\delta(X_j)$ yield $v_n^2 \leq c$ for some constant $c>0$.
Similar arguments could be combined with the condition (ii) to obtain the boundedness of $v_n^2.$ Finally, if condition (iii) is met, let $V_i =\langle X_i ,\widetilde \gamma\rangle $ and $\overline \delta (V_j) = \mathbb{E}[\delta(X_j)\mid V_j].$ Then using the inverse Fourier transform device we have
\begin{eqnarray*}
\mathbb{E}[\delta(X_j)h^{-1}K_h(V_i - V_j )\mid X_i] &=&  \mathbb{E}\left[\overline\delta (V_j) \int \exp\{i t (V_i - V_j) \} \mathcal{F}[K] (ht) dt \mid V_i\right]\\
&=&  \int_{\mathbb{R}} \exp\{i t V_i \} \mathcal{F}[\overline \delta f_{\widetilde\gamma} ](t)\mathcal{F}[K](ht) dt.
\end{eqnarray*}
Take absolute value in the last integral, use the fact that $\mathcal{F}[\overline \delta f_{\widetilde\gamma} ]\in L^1 (\mathbb{R})$, Lebesgue dominated convergence theorem and the fact that $\mathcal{F}[K] (ht)\rightarrow \mathcal{F}[K] (0)$ as $h\rightarrow 0$ to deduce that $\mathbb{E}[\delta(X_j)h^{-1}K_h(V_i - V_j )\mid X_i]$ is bounded, and so is $v_n^2$. Deduce that with any of the conditions (i) to (iii), $Q_{1n}(\widetilde \gamma)=O_{\mathbb{P}}(n^{-1/2}).$ Finally, it is easy to show that $Var[Q_{2n}(\widetilde \gamma)]\rightarrow 0$ (see, e.g.,the proof of equation (26) in Lavergne and Patilea (2008)). It remain to study
$$
\mathbb{E}[Q_{2n}(\widetilde \gamma)]=\int_{\mathbb{R}} |\mathcal{F}[\overline \delta f_{\widetilde\gamma} ]|^2(t)\mathcal{F}[K] (ht) dt.
$$
If condition (i) or (ii) holds true, $\overline \delta f_{\widetilde\gamma}\in L ^2 (\mathbb{R})$ and by Plancherel theorem and Lebesgue dominated convergence theorem, $\mathbb{E}[Q_{2n}(\widetilde \gamma)]\rightarrow \int_{\mathbb{R}} |\overline \delta f_{\widetilde\gamma}|^2>0$. If condition (iii) is met, $\mathcal{F}[\overline \delta f_{\widetilde\gamma} ]\in L ^2 (\mathbb{R})$ and since $\overline \delta f_{\widetilde\gamma} \in L ^1 (\mathbb{R})$, deduce that $\overline \delta f_{\widetilde\gamma} \in L ^2 (\mathbb{R})$ and continue with the same arguments. Deduce that with any of the conditions (i) to (iii), $ Q_{2n}(\widetilde \gamma) \asymp O_{\mathbb{P}}(1).$ Collecting the rates, we obtain the result.
\end{proof}

\subsection{Testing the functional linear model: proofs of the results}

To simplify notation, in this section we write $\|\cdot\|$ instead of $\|\cdot\|_{L^2}.$

%
%
%

\bigskip

\begin{proof}[Proof of Lemma \ref{diff_fund}]
By simple calculations, we have $\widehat{U}_i = U_i- \langle \widehat{b}-b,X_i-\overline{X}_n\rangle- \overline{U}_n $. Let $K_{h,ij}(\gamma)$ be a short notation for
$
K_{h}\left( \langle X_{i}-X_{j},\gamma\rangle  \right).$ We have the following decomposition
\begin{equation*}
Q_{n}(\gamma; \widehat a,\widehat b ) = Q_{n}(\gamma)-2V_1(\gamma)-2V_2(\gamma)+V_3(\gamma)+V_4(\gamma)+2V_5(\gamma)
\end{equation*}
where
\begin{equation*}
V_1 =  \frac{\overline{U}_n}{n(n-1) h}\sum_{1\leq i\neq j\leq n} U_i K_{h,ij}(\gamma),\;\; V_2 = \left\langle \widehat{b}-b,\frac{1}{n(n-1) h}\sum_{i\neq j} U_i( X_j -\overline{X}_n)K_{h,ij}(\gamma)\right\rangle,
\end{equation*}
\begin{equation*}
V_3 = \frac{1}{n(n-1) h}\sum_{ i\neq j}\langle \widehat{b}-b,X_i-\overline{X}_n\rangle \langle \widehat{b}-b,X_j-\overline{X}_n\rangle  K_{h,ij}(\gamma)
\end{equation*}
\begin{equation*}
V_4 = \frac{\overline{U}_n^2}{n(n-1) h}\sum_{ i\neq j} K_{h,ij}(\gamma), \;\; V_5 = \overline{U}_n\left\langle \widehat{b}-b,\frac{1}{n(n-1) h}\sum_{i\neq j} ( X_j -\overline{X}_n)K_{h,ij}(\gamma)\right\rangle.
\end{equation*}
To prove the rate in the first part of (\ref{eq_test_lin}) we will show that $$\sup_{\gamma\in \mathcal{S}^p} nh^{1/2}|V_j| =   o_{\mathbb{P}} (1),$$ for $j=1$ to $j=5$.
First let us notice that by Fubini Theorem, $\mathbb{E} (\| \overline{X}_n - \mathbb{E} (X) \|^2) = n^{-1}\int_0^1 Var(X(t))dt$ and so  $\| \overline{X}_n - \mathbb{E} (X) \|
= O_{\mathbb{P}}(n^{-1/2}).$

For $V_1$ use the fact that $\overline{U}_n = O_{\mathbb{P}}(n^{-1/2})$ and apply Lemma \ref{tech_emp_proc}. Thus there exists $a>0$ and   $ 0<\epsilon < 2a(1-2\zeta) $ such that  $$\sup_{\gamma\in \mathcal{S}^p} nh^{1/2}|V_1| = nh^{1/2}O_{\mathbb{P}}(n^{-1/2}) O_{\mathbb{P}}(n^{-1/2+\epsilon}p^{1/2}h^{-1/2 +a})= o_{\mathbb{P}}(1).$$
To derive the rate of  $V_2$ let us write
\begin{eqnarray*}
V_2 &=& \left\langle \widehat{b}-b, \frac{1}{n(n-1) h}\sum_{i\neq j} U_i\{ X_j -\mathbb{E} (X) \} K_{h,ij}(\gamma)\right\rangle\\
&&- \left\langle \widehat{b}-b, \overline{X}_n - \mathbb{E} (X)\right\rangle\frac{1}{n(n-1) h}\sum_{i\neq j} U_iK_{h,ij}(\gamma) \\
&=& V_{21} - V_{22}
\end{eqnarray*}
By Cauchy-Schwarz inequality, the rate of $\| \overline{X}_n - \mathbb{E} (X) \|$ and Lemma \ref{tech_emp_proc},
$\sup_{\gamma\in \mathcal{S}^p} |V_{22}| = o_{\mathbb{P}}(n^{-1/2}\|\widehat{b}-b\|)=o_{\mathbb{P}}(n^{-2\rho}) $.
For the rate of $V_{21}$ let us write
\begin{eqnarray*}
\frac{n-1}{n} V_{21}  &=&  h^{-1}\left[ \frac{1}{n}\sum_{1\leq  i\leq n} U_i K_{h,ij}(\gamma)\right] \frac{1}{n }\sum_{1\leq  j\leq n} \left\langle \widehat{b}-b,  \; X_j -\mathbb{E} (X) \right\rangle \\
&& - \left\langle \widehat{b}-b, \frac{K(0)}{n^2 h}\sum_{1\leq i\leq n} U_i\{ X_i -\mathbb{E} (X) \} \right\rangle\\
&=&  V_{211} - V_{212} .
\end{eqnarray*}
By Cauchy-Schwarz inequality and the law of large numbers with $|U_i|\| X_i -\mathbb{E} (X) \|,$  $V_{212}= n^{-1}h^{-1} O_{\mathbb{P}}(1) O_{\mathbb{P}}(\|\widehat b - b\|).$ Next, by Cauchy-Schwarz inequality we can write
$$
|V_{211}|\leq h^{-1} \left\{ \sup_{\gamma\in\mathcal{S}^p, t\in\mathbb{R}} |Z_n(\gamma,t)| \right\}  \| \widehat{b}-b\| \| \overline{X}_n - \mathbb{E} (X) \|,
$$
where
$$
Z_n(\gamma,t)= \frac{1}{n}\sum_{1\leq  i\leq n} U_i K_{h}(\langle X_i ,\gamma\rangle - t )
$$
Apply Lemma \ref{tech_emp_proc}  to deduce that there exists some small $\epsilon>0$ such that
$$
\sup_{\gamma\in \mathcal{S}^p}nh^{1/2}|V_{211}|= nh^{-1/2} O_{\mathbb{P}}(h^{1/2}n^{-1/2+\epsilon}p^{1/2})  O_{\mathbb{P}}( \| \widehat{b}-b\|) O_{\mathbb{P}}(n^{-1/2}) =o_{\mathbb{P}}(1).
$$
Deduce that
$$
\sup_{\gamma\in \mathcal{S}^p} nh^{1/2}| V_{2}| = o_{\mathbb{P}}(1).
$$
For $V_3$ take absolute values and use Cauchy-Schwarz inequality and triangle inequality:
$$
|V_3| \leq  \frac{\|\widehat{b}-b\|^2}{n(n-1) h}\sum_{i\neq j} \{\|X_i- \mathbb{E}(X_i)\| +  \| \overline{X}_n- \mathbb{E}(X)\| \}  \{ \|X_j- \mathbb{E}(X_j)\| +  \| \overline{X}_n- \mathbb{E}(X)\| \} K_{h,ij}(\gamma).
$$
Apply Lemma \ref{tech_u_stat} three times and deduce that
$$\sup_{\gamma\in \mathcal{S}^p} nh^{1/2}|V_3| = nh^{1/2}O_{\mathbb{P}}(\|\widehat{b}-b\|^2)O_{\mathbb{P}}(1)= o_{\mathbb{P}}(1).$$
For $V_4$ apply Lemma \ref{tech_u_stat} with $k_1 = 0$, $k_2=0$ and $l=1$ and the rate of $\overline{U}_n$ to deduce
$$\sup_{\gamma\in \mathcal{S}^p} nh^{1/2}|V_4| =  nh^{1/2}O_{\mathbb{P}}(n^{-1})O_{\mathbb{P}}(1)= o_{\mathbb{P}}(1).$$
Finally, let us write
\begin{eqnarray*}
V_5 &=& \frac{\overline{U}_n}{n(n-1) h}\sum_{i\neq j}\left\langle \widehat{b}-b,  X_j -\mathbb{E} (X) \right\rangle K_{h,ij}(\gamma)\\
&&-  \left\langle \widehat{b}-b,\overline{X}_n - \mathbb{E} (X)\right\rangle \frac{\overline{U}_n}{n(n-1) h}\sum_{i\neq j} K_{h,ij}(\gamma) \\
&=:& V_{51} + V_{52}.
\end{eqnarray*}
By Cauchy-Schwarz inequality and Lemma \ref{tech_u_stat} with $k_1 = 0$, $k_2=2$, $l=1$ and $U_j^2  $ replaced by $\|X_j- \mathbb{E}(X_j)\|$,
$$
\sup_{\gamma\in \mathcal{S}^p} nh^{1/2}| V_{51}| = nh^{1/2} O_{\mathbb{P}}(n^{-1/2}) O_{\mathbb{P}}(\| \widehat{b}-b\| ) O_{\mathbb{P}}(1)= n^{1/2}h^{1/2}O_{\mathbb{P}}(\| \widehat{b}-b\|)= o_{\mathbb{P}}(1).
$$
Next, similar arguments for the uniform rate of $V_{52}$.
Deduce that
$$
\sup_{\gamma\in \mathcal{S}^p} nh^{1/2}| V_{5}| = o_{\mathbb{P}}(1).
$$
The arguments for the rate in the second  part of (\ref{eq_test_lin}) are similar and hence will be omitted.
\end{proof}

\bigskip

\begin{proof}[Proof of Lemma \ref{leem_order_3}]
Let $\widehat{g}$ (resp. $\widehat{g}^0$) be the random function defined in (\ref{g_funct_i}) that one would obtain under the null (resp. alternative) hypothesis, that is with covariates $X_i$ and responses $a + \langle b, X_i \rangle   + U^0_i $ (resp. $a + \langle b, X_i \rangle + \delta(X_i)  + U^0_i $).  We can write
\begin{eqnarray*}
\|\widehat{b}^0-\widehat{b}\|^2
&=&\sum_{j=1}^m (\widehat{b}^0_j-\widehat{b}_j)^2  =  \sum_{j=1}^m \widehat{\theta}^{-2}_j \vert \langle \widehat{g}-\widehat{g}^0, \widehat{\phi}_j \rangle \vert^2
 \leq  \sum_{j=1}^m \widehat{\theta}^{-2}_j \Vert  \widehat{g}-\widehat{g}^0 \Vert^2 \Vert \widehat{\phi}_j \Vert^2\\
& = & r_n^2  \; \int_0^1 \!\!\left( \frac{1}{n}\sum_{i=1}^n \delta(X_i)\{X_i(u)-\overline{X}_n(u) \}\right)^2 \!\!\!du \;\;\sum_{j=1}^m \widehat{\theta}^{-2}_j
=: r_n^2 \Gamma_n \sum_{j=1}^m \widehat{\theta}^{-2}_j .
\end{eqnarray*}
We have
\begin{eqnarray*}
\esp \! \int_0^1 \!\! \left( \frac{1}{n} \sum_{i=1}^n \delta(X_i)\{X_i(u)-\esp X_i(u)\}\right)^2\!\!du & = &\!\! \int_0^1 \esp\left( \frac{1}{n} \sum_{i=1}^n \delta(X_i)\{X_i(u)-\esp X_i(u)\}\right)^2\!\!du\\
& = & \frac{1}{n^2} \int_0^1 \sum_{i=1}^n \esp[\delta^2(X_i)\{X_i(u)-\esp X(u)\}^2] du\\
& = & \frac{1}{n} \int_0^1 \esp[\delta^2(X_1)\{X_1(u)-\esp X_1(u)\}^2] du\\
& \leq & \frac{1}{n} \esp^{1/2} [\delta^4(X)] \esp [\Vert X-\esp X \Vert^2],
\end{eqnarray*}
where for the second equality we used the fact that $\mathbb{E}[\delta (X)\{X -\mathbb{E}X \}]=0 .$ On the other hand, since $\mathbb{E}[\delta (X)]=0 $, by the law of large numbers $\overline{\delta(X)}_n =  n^{-1}\sum_{i=1}^n \delta (X_i) = o_{\mathbb{P}}(1).$ Recall that  $\| \overline{X}_n - \mathbb{E} (X) \|
= O_{\mathbb{P}}(n^{-1/2}).$ Deduce that
$$\int_0^1 \left( \frac{1}{n} \sum_{i=1}^n \delta(X_i)\{\overline{X_n}(u)-\esp X(u)\}\right)^2\textrm{d}u = \overline{\delta(X)}_n^2 \| \overline{X}_n - \mathbb{E} (X) \| ^2 = o_{\mathbb{P}}(n^{-1}),$$
and finally that $\Gamma_n = O_{\mathbb{P}}(n^{-1}) .$ For the last part, use Theorem 1 of Hall and Horowitz (2007) which provides the rate of  $\int_0^1 \{\widehat{b}^0(u)-b(u)\}^2 \textrm{d}u$. Next, let us recall that Assumption \ref{order}-(c) implies $\theta_j \geq c j^{-\alpha}$ for some constant $c$ and thus $\sum_{j=1}^m \theta^{-2}_j = O (n^{(2\alpha +1)/( \alpha + 2\beta )}) = o(n)$  provided that  $m \asymp n^{1 / ( \alpha + 2\beta )}.$
Finally, one can  deduce from the equations (5.6) to (5.9) of Hall and Horowitz (2007) that $\sum_{j=1}^m (\theta^{-2}_j - \widehat{\theta}^{-2}_j ) = o_{\mathbb{P}}(n)$. Now the proof is complete.
\end{proof}

\bigskip
\begin{proof}[Proof of Theorem \ref{altern_b}]
Like in the proof of Theorem \ref{altern}, it suffice to show that $Q_n(\widetilde \gamma)\asymp_{\mathbb{P}}r_n^2$  for some fixed $p$ sufficiently large and $\widetilde \gamma\in B_p$, where
$$
Q_n(\widetilde \gamma)  =  \frac{1}{n(n-1)h}\sum_{i\neq j} \widehat U_i \widehat U_j K_h (\langle X_i - X_j , \widetilde \gamma\rangle ).
$$
Here $\widehat U_i$ are defined as in (\ref{eq_alty}). There are 15 cross-product terms, all of them similar or identical to those analyzed in the proofs of Theorem \ref{altern} and Lemma \ref{diff_fund}. For the sake of brevity we omit the details.
\end{proof}

\bigskip

\bigskip

\begin{center}
{\small REFERENCES }
\end{center}

{\small \parindent=0cm }

\newcounter{refs}
\begin{list}{}{\usecounter{refs}
\setlength{\labelwidth}{0in}
\setlength{\labelsep}{0in}\setlength{\leftmargin}{0in}
\setlength{\rightmargin}{0in} \setlength{\topsep}{0in}
\setlength{\partopsep}{0in} \setlength{\itemsep}{0cm} }

\item {\footnotesize \textsc{Bierens, H.J.} (1990). A consistent conditional
moment test of functional form. \textsl{Econometrica} \textbf{58},
1443--1458. }

\item {\footnotesize \textsc{Cai, T.,  and Hall, P.}  (2006). Prediction in functional linear regression.  \textsl{%
Annals of Statistics} \textbf{34}, 2159--2179. }

\item {\footnotesize \textsc{Cardot, H., Ferraty, F., Mas, A., and Sarda, P.}  (2003). Testing Hypotheses in the Functional
Linear Model. \textsl{Scandinavian Journal of Statistics} \textbf{30}, 241--255.}

\item {\footnotesize \textsc{Cardot, H., Goia, P., and Sarda, P.} (2004). Testing for no effect in functional linear regression models, some computational approaches. \textsl{Communications in Statistics - Simulation and Computation} \textbf{33}, 179--199.  }

\item {\footnotesize \textsc{Chen, D., Hall, P., M\"{u}ller, H.G.} (2011). Single and multiple index functional regression models with nonparametric link
\textsl{%
Annals of Statistics} \textbf{39}, 1720--1747}

\item {\footnotesize \textsc{Chow, Y.S., and Teicher, H.} (1997). \textsl{Probability Theory: Independence, Interchangeability, Martingales} (3rd ed.) Springer-Verlag, New-York. }

\item {\footnotesize \textsc{Crambes, C., Kneip, A., and Sarda, P.} (2008). Smoothing splines estimators for functional linear
regression. \textsl{Annals of Statistics} \textbf{37}, 35--72.}

\item {\footnotesize \textsc{Delsol, L., Ferraty, F., and Vieu, P.}
(2011). Structural test in regression on functional variables.
\textsl{Journal of Multivariate Analysis} \textbf{102},
422--447.}

\item {\footnotesize \textsc{Ferraty, F.} (Ed.) (2011). \textsl{Recent Advances in Functional Data Analysis and Related Topics.} Springer-Verlag Berlin Heidelberg.}

\item {\footnotesize \textsc{Ferraty, F., and Vieu, P.} (2006).
     \textsl{Nonparametric Functional Data Analysis: Theory and Practice}.
     Springer,  Berlin.}


\item {\footnotesize \textsc{Guerre, E., and Lavergne, P.} (2005).
Data-driven rate-optimal specification testing in regression models. \textsl{%
Annals of Statistics} \textbf{33}, 840--870. }

\item {\footnotesize \textsc{Hall, P., and Horowitz, J.L. } (2007).
Methodology and convergence rates for functional linear regression. \textsl{Annals of
Statistics} \textbf{35}, 70--91. }

\item {\footnotesize \textsc{H\"{a}rdle, W., and Mammen, E.} (1993).
Comparing nonparametric versus parametric regression fits. \textsl{Annals of
Statistics} \textbf{21}, 1296--1947. }

\item {\footnotesize \textsc{Horowitz, J.L., and Spokoiny, V.G.} (2001). An
adaptive, rate-optimal test of a parametric model against a nonparametric
alternative. \textsl{Econometrica} \textbf{69}, 599--631. }

\item {\footnotesize \textsc{Horv\`{a}th, L.,  and Reeder, R.} (2011).
A test of significance in functional quadratic regression.
arXiv:1105.0014v1 [math.ST].}

\item {\footnotesize \textsc{Kosorok, M.R.} (2008). \textsl{Introduction to Empirical Processes and Semiparametric Inference.} Springer Series in Statistics. Springer-Verlag, New-York.}

\item {\footnotesize \textsc{Lavergne, P. and Patilea, V.} (2008). Breaking the curse of dimensionality in nonparametric testing. \textsl{Journal of Econometrics} \textbf{143}, 103--122. }


\item {\footnotesize \textsc{Major, P.} (2006). An estimate on the
supremum of a nice class of stochastic integrals and U-statistics.
\textsl{Probability Theory and Related Fields}
\textbf{134}, 489--537.}

\item {\footnotesize \textsc{Mammen, E.} (1993). Bootstrap and Wild Bootstrap for High Dimensional Linear Models. \textsl{Annals of Statistics} \textbf{21}, 255--285.}

\item {\footnotesize \textsc{M\"{u}ller, H.G. and Stadtm\"{u}ller, U.} (2005). Generalized
functional linear models. \textsl{Annals of Statistics} \textbf{33},
774--805. }

\item {\footnotesize \textsc{Nolan, D., and Pollard, D.} (1987). $U-$%
processes : Rates of convergence. \textsl{Annals of  Statistics} \textbf{15},
780--799. }

\item {\footnotesize \textsc{Parthasarathy, K.R.} (1967). \textsl{Probability measures
 on metric spaces.} A.M.S. New-York. }

\item {\footnotesize \textsc{Ramsay, J., and Silverman, B.W.} (2005).
     \textsl{Functional Data Analysis} (2nd ed.).
     Sprin\-ger-Verlag, New York.}

\item {\footnotesize \textsc{Rudin, W.} (1987). \textsl{Real and
complex analysis.} McGraw-Hill. }

\item {\footnotesize \textsc{Stute, W.} (1997). Nonparametric models checks
for regression. \textsl{Annals of Statistics} \textbf{25}, 613--641. }

\item {\footnotesize
 \textsc{van der Vaart, A.D.,  and Wellner, J.A.} (1996). \textsl{Weak convergence and empirical processes.} Springer Series in Statistics. Springer-Verlag, New-York.}

\item {\footnotesize \textsc{Yao, F., and  M\"{u}ller, H.G.} (2010). Functional quadratic regression. \textsl{Biometrika} \textbf{97}, 49--64}.

\end{list}

\end{document}